\definecolor{e-mail}{rgb}{0,.40,.80}
\definecolor{reference}{rgb}{.20,.60,.22}
\definecolor{citation}{rgb}{0,.40,.80}
\newtheorem{thm}{Theorem}
\newtheorem{lm}[thm]{Lemma}
\newtheorem{cnj}[thm]{Conjecture}
\theoremstyle{definition}
\newtheorem{cor}[thm]{Corollary}
\newtheorem{rmk}[thm]{Remark}
\numberwithin{thm}{section}
\numberwithin{equation}{section}
\title{Symmedians as Hyperbolic Barycenters}
\author{Maxim Arnold and Carlos E.~Arreche}
\address{Department of Mathematical Sciences, The University of Texas at Dallas, Richardson, TX 75080}
\email{Maxim.Arnold@utdallas.edu; Arreche@utdallas.edu}
\begin{document}

\begin{abstract}
The symmedian point of a triangle enjoys several geometric and optimality properties, which also serve to define it. We develop a new dynamical coordinatization of the symmedian, which naturally generalizes to other ideal hyperbolic polygons beyond triangles. We prove that in general this point still satisfies analogous geometric and optimality properties to those of the symmedian, making it into a hyperbolic barycenter. We initiate a study of moduli spaces of ideal polygons with fixed hyperbolic barycenter, and of some additional optimality properties of this point for harmonic (and sufficiently regular) ideal polygons.
\end{abstract}

\maketitle


\section{Introduction}
The \emph{symmedian point} of a triangle (also called its \emph{Lemoine point} or its \emph{Grebe point}) has been aptly called ``one of the crown jewels of modern geometry'' in \cite[\S7.1]{Honsberger}. Let us recall one way to construct it (cf.~\cite[\S7.4(iii)]{Honsberger}). Let $\mathbf{O}$ denote the circumcircle of 
$\Delta(ABC)$. Denote by $A^*$ the intersection of the tangents to $\mathbf{O}$ at the vertices $B$ and $C$, and define $B^*$ and $C^*$ analogously. Then the lines $AA^*$, $BB^*$ and $CC^*$ are concurrent at the \emph{symmedian point} $S$ of the triangle $ABC$.
\begin{figure}[!hbt]
    \centering
    \includegraphics[width=0.65\textwidth]{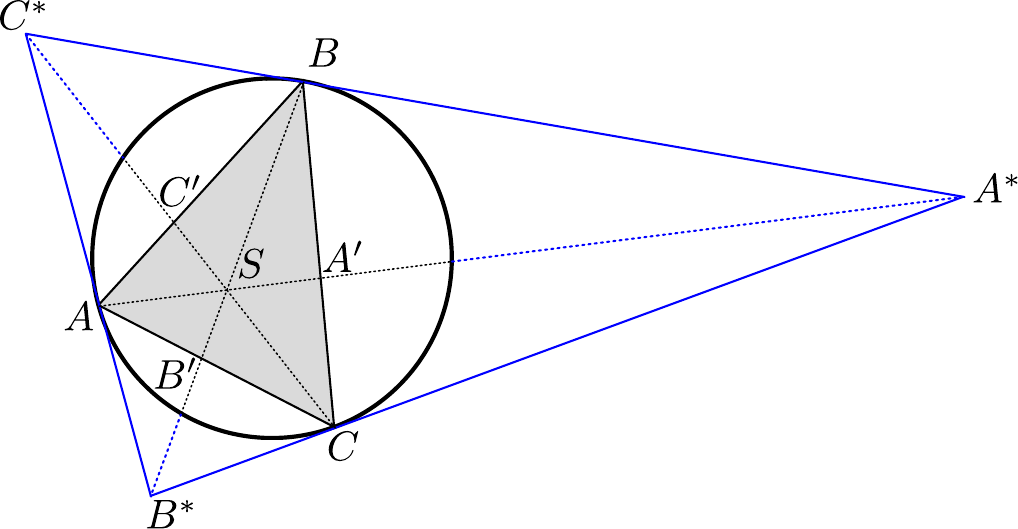}
    \caption{Construction of the symmedian point $S$ of the triangle $\Delta(ABC)$.}
    \label{fig:Sym}
\end{figure}

Among very many other properties enjoyed by the symmedian point $S$, there is the \mbox{following} optimization property, which is included as Exercise~7.3 in \cite{Honsberger}, and has been known (and reproved multiple times) since at least 1804 (cf.~\cite[p.~94]{Mackay}). 

\begin{thm}
\label{th:Euc}
The symmedian point $S$ of a triangle minimizes the sum of the squares of the distances from $S$ to the sides of the triangle. 
\end{thm}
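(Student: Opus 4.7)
The plan is to reduce the optimization to a one-line application of the Cauchy--Schwarz inequality on the affine constraint imposed by the area of the triangle, and then identify the minimizer with the symmedian point $S$ via its characterization by distance ratios to the sides.

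Let $a=|BC|$, $b=|CA|$, $c=|AB|$, let $K$ be the area of $\Delta(ABC)$, and for any interior point $P$ write $d_a,d_b,d_c$ for the distances from $P$ to the sides $BC$, $CA$, $AB$ respectively. Decomposing $\Delta(ABC)$ into the three sub-triangles with apex $P$ gives the affine constraint
\begin{equation*}
a\,d_a + b\,d_b + c\,d_c \;=\; 2K.
\end{equation*}
By Cauchy--Schwarz,
\begin{equation*}
\bigl(d_a^2+d_b^2+d_c^2\bigr)\bigl(a^2+b^2+c^2\bigr) \;\geq\; \bigl(a\,d_a+b\,d_b+c\,d_c\bigr)^2 \;=\; 4K^2,
\end{equation*}
with equality if and only if $(d_a,d_b,d_c)$ is proportional to $(a,b,c)$. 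Hence the sum of squared distances is minimized at the unique interior point $P^\ast$ whose distances to the sides satisfy $d_a:d_b:d_c = a:b:c$.

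The remaining (and only substantive) step is to verify that $P^\ast=S$, where $S$ is the symmedian point constructed in the excerpt as the concurrence of $AA^\ast$, $BB^\ast$, $CC^\ast$. For this I would use the standard characterization of the $A$-symmedian as the locus of points $P$ inside the angle at $A$ with $d_b/d_c = b/c$, together with the cyclic analogs at $B$ and $C$; intersecting two such loci produces the unique interior point with $d_a:d_b:d_c=a:b:c$. To tie this in to the tangent-based construction, I would show that the line $AA^\ast$ is precisely this $A$-symmedian. One convenient route is projective: $A^\ast$ is by definition the pole of $BC$ with respect to the circumcircle $\mathbf O$, so $(B,C;M,N)$ is harmonic, where $M=AA^\ast\cap BC$ and $N=BC\cap$ (polar of $A$). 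A short computation using the law of sines in $\triangle ABM$ and $\triangle ACM$ then gives $BM/MC=c^2/b^2$, and since the foot of the perpendicular from any point of $AM$ to the two sides $AB,AC$ has length ratio $BM\sin B : MC\sin C = c^2\sin B/(b^2\sin C) = c/b$ (by the law of sines), this identifies $AA^\ast$ with the $A$-symmedian.

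The main obstacle is thus not the optimization, which is essentially automatic, but the bookkeeping of this last identification step---showing that the tangent-intersection construction of $S$ and the distance-ratio characterization of $P^\ast$ describe the same point. Once this is in place, the extremal property follows immediately from the equality case of Cauchy--Schwarz.
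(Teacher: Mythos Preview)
Your argument is correct. The optimization half is essentially the same as the paper's: the paper (in the proof of Theorem~\ref{th:harmonic-coincidence}, of which Theorem~\ref{th:Euc} is the triangle case) uses the area constraint $\mathbf{d}(X)\cdot\boldsymbol{\sigma}=2A$ and then the orthogonal decomposition $|\mathbf{d}(X)|^2=4A^2|\boldsymbol{\sigma}|^{-2}+|\boldsymbol{\sigma}^\perp(X)|^2$, which is just Cauchy--Schwarz repackaged. Both arguments conclude that the minimizer is the unique point with $\mathbf{d}$ proportional to $\boldsymbol{\sigma}$.

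The identification step is where the routes diverge. You argue classically: harmonic conjugates on $BC$, the foot ratio $BM/MC=c^2/b^2$, and the law of sines to recover $d_b:d_c=b:c$ along $AA^\ast$. The paper instead proves (Lemma~\ref{lm:euclidean-proportionality}) that the locus $d_{\ell-1}/\sigma_{\ell-1}=d_\ell/\sigma_\ell$ is exactly the hyperbolic altitude from $P_\ell$, via a one-line identity in its $I_\ell,J_\ell,K_\ell$ coordinates, and then invokes the fact that $S_\mathbf{P}$ is the hyperbolic orthocenter. Your approach is self-contained and purely Euclidean; the paper's buys generality, since the same lemma feeds directly into the harmonic-polygon case of Theorem~\ref{th:harmonic-coincidence} without any triangle-specific ratio chasing.
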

This investigation originated from the humble observation that the above construction of the symmedian point of a triangle (usually proved as a theorem rather than used as a definition) makes it into the orthocenter of the ideal hyperbolic triangle with ``the same'' vertices, and how strange it seems that ``the same'' point should be meaningful from both the Euclidean and hyperbolic vantage points simultaneously. 
We shall show in Theorem~\ref{th:harmonic-coincidence} that this classical fact is only the first instance of a general property shared by all harmonic polygons beyond triangles.

The main goal of this short note is to introduce a generalization of the symmedian point $S$ of a triangle to inscribed polygons (or what is ``the same'', ideal hyperbolic polygons), which we call the \emph{hyperbolic barycenter}, and initiate a study of its properties. This point is uniquely defined in three different ways: by explicit coordinates~\eqref{eq:Sym}, by an optimality property (see Theorem~\ref{th:Klei}), and by a collection of geometric constructions (see \S\ref{sec:concurrence}), all directly analogous to those identifying the symmedian point of a triangle. We have the sense of having opened some long-forgotten Pandora's box, and 
we have now more questions than answers. We have striven to make this work as accessible as possible so that other researchers (including students!) may better address the questions that we leave unanswered. We hope the more sophisticated reader will not begrudge us our choice to keep the exposition, constructions, and proofs as elementary as we knew how.

Let us briefly describe the contents of this work in some more detail. In \S\ref{sec:elements}, after recalling some basic facts from hyperbolic geometry and fixing some conventions, we begin to refer to the \emph{Kleinization} 
of an inscribed Euclidean polygon, which is the ideal hyperbolic polygon obtained by identifying the open unit disc with the hyperbolic plane (in the Klein-Beltrami model), and to the opposite process of \emph{deKleinization} of hyperpolic geometric objects into Euclidean ones, and show in Theorem~\ref{th:cevian} the first of several hyperbolic minimality properties of the Kleinization of the symmedian point $S$ of a Euclidean triangle. 
In \S\ref{sec:coordinates} we introduce a dynamical coordinatization of this $S$, in terms of certain Hamiltonians introduced in \cite{AFIT} for the study of cross-ratio dynamics on ideal polygons, and use it to prove another minimality property in Theorem~\ref{th:Klei}, which makes $S$ into a hyperbolic barycenter.
In \S\ref{sec:polygons} we introduce the hyperbolic barycenters of general ideal hyperbolic polygons, and prove several interesting and useful properties that, in particular, allow for their several concurrent geometric (straightedge and compass) constructions. In \S\ref{sec:concurrence} we illustrate some of these geometric constructions for the first few ideal hyperbolic $n$-gons beyond triangles, with $n=4,6,10,5$. In \S\ref{sec:moduli} we initiate the study of the moduli spaces of ideal hyperbolic \mbox{$n$-gons} with a fixed common hyperbolic barycenter, and describe it explicitly for the smallest values of $n=3,4$ in terms of Poncelet conics. In \S\ref{sec:euclidean} we show that the classical Theorem~\ref{th:Euc} is the first instance of a more general Theorem~\ref{th:harmonic-coincidence} that holds uniformly for all \emph{harmonic} ideal hyperbolic polygons.
We prove a partial converse in Theorem~\ref{th:coincidence-4}, 
that the deKleinization of the hyperbolic barycenter of an ideal quadrilateral $\mathbf{P}$ minimizes the sum of squares of distances to the sides if and only if either $\mathbf{P}$ is harmonic or its deKleinization is a rectangle.

\section{Elements of hyperbolic geometry}\label{sec:elements}

In the \emph{Klein-Beltrami model}, the hyperbolic plane $\mathbb{H}$ is identified with the interior of the unit disc in $\mathbb{R}^2$. The unit circle boundary of this disc, called the \emph{absolute}, represents ``ideal points'' at infinite hyperbolic distance from any given point in $\mathbb{H}$. This absolute can be naturally identified with the real projective line $\mathbb{RP}^1$ (see e.g. \cite{Cox}), by associating with $p\in \mathbb{RP}^1$ the point \begin{equation}
    \label{eq:RP1}P=\left(\frac{1-p^2}{1+p^2},\frac{2p}{1+p^2}\right)
\end{equation}on the unit circle. Writing $p=\tan(\varphi/2)$ yields the usual parametrization $P=(\cos \varphi,\sin\varphi)$. The hyperbolic distance between two points $X,Y\in\mathbb{H}$ is defined by (see e.g. \cite[\S 19.2.5]{BergerII})
\begin{equation}\label{eq:point-dist}
d_{\mathcal{K}}(X,Y)=\cosh^{-1}{\dfrac{1-X\cdot Y}{\sqrt{(1-|X|^2)(1-|Y|^2|)}}}\end{equation} where $\cosh(z):=(e^z+e^{-z})/2$ denotes the hyperbolic cosine, $X\cdot Y$ stands for the usual inner product of two vectors in $\mathbb{R}^2$, and $|X|^2:=X\cdot X$. Geodesics are represented by straight lines, but the hyperbolic angles between geodesics do not in general agree with the corresponding Euclidean angles. Writing $\xi_1$ for the geodesic that intersects the absolute at $P_\ell=\left(\frac{1-p_\ell^2}{1+p_\ell^2},\frac{2p_\ell}{1+p_\ell^2}\right)$ for $\ell=1,2$ as in \eqref{eq:RP1}, the hyperbolic distance from $X=(x,y)\in\mathbb{H}$ to $\xi_1$ is computed by
\begin{equation}\label{eq:line-dist}
    d_\mathcal{K}(X,\xi_1)=\cosh^{-1}\sqrt{\frac{\left( x(1-p_1p_2) + y(p_1 + p_2)- 
   (1+p_1 p_2)\right)^2}{(p_1 - p_2)^2 (1 - x^2 - y^2)}+1}.
\end{equation}

The construction of the symmedian point of a triangle, described in Figure~\ref{fig:Sym}, is particularly suggestive to the hyperbolic geometer.
As mentioned in the introduction, if we interpret the circle $\mathbf{O}$ as the absolute in the Klein-Beltrami model, we can interpret the Euclidean triangle $\Delta(ABC)$ as an ideal hyperbolic triangle $\Delta_\mathcal{K}(ABC)$ in $\mathbb{H}$, with ``the same'' vertices. We call $\Delta_\mathcal{K}(ABC)$ the Kleinization of $\Delta(ABC)$, and $\Delta(ABC)$ the deKleinization of $\Delta_\mathcal{K}(ABC)$. We emphasize that the idea of ``Kleinizing'' Euclidean geometric objects is not new, see e.g. \cite{akop, RES}.

The point $A^*$ is called the \emph{polar} to the line $BC$ with respect to $\mathbf{O}$, and any geodesic through $A^*$ intersects $BC$ orthogonally in $\mathbb{H}$. Therefore, the symmedian point of $\Delta(ABC)$ is the point of intersection of the hyperbolic altitudes of $\Delta_\mathcal{K}(ABC)$, i.e., its hyperbolic orthocenter. This simple observation readily provides another minimality property of the symmedian point $S$, as follows. For any Euclidean triangle $\Delta(ABC)$ and a given point $X$ in the plane, consider its \emph{Cevian triangle} $\Delta(A'B'C')$, where $A'$ is the intersection of the line $AX$ with the line $BC$, and the points $B'$ and $C'$ are defined similarly.

\begin{thm}\label{th:cevian}
The symmedian point of a Euclidean triangle minimizes the hyperbolic perimeter of the Kleinization of its Cevian triangle.
\end{thm}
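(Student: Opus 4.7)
The starting point is the observation made just before the theorem: the symmedian point $S$ of $\Delta(ABC)$ coincides with the hyperbolic orthocenter of the ideal hyperbolic triangle $\Delta_\mathcal{K}(ABC)$, because the pole $A^*$ of $BC$ with respect to $\mathbf{O}$ is precisely the point through which every geodesic hyperbolic-perpendicular to $BC$ passes, and similarly for $B^*$ and $C^*$. Consequently, the Kleinization of the Cevian triangle from $S$ is exactly the hyperbolic pedal (orthic) triangle of $\Delta_\mathcal{K}(ABC)$: its vertices are the feet of the hyperbolic altitudes dropped from $A$, $B$, $C$ onto the respective opposite sides.

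I would then reduce the theorem to a hyperbolic analog of Fagnano's inscribed-triangle theorem. For any point $X$ in the interior of $\Delta(ABC)$, the Cevian triangle of $X$ has one vertex on each side of $\Delta_\mathcal{K}(ABC)$, so the Cevian triangles form a two-parameter subfamily of the three-parameter family of all triangles inscribed in $\Delta_\mathcal{K}(ABC)$ (one vertex on each geodesic side). It therefore suffices to prove the stronger statement that the hyperbolic pedal triangle attains the minimum hyperbolic perimeter over the larger family, because that minimizer is already realized by a Cevian triangle (namely the one from $S$), and hence it also minimizes over the smaller family.

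For this hyperbolic Fagnano statement, I would adapt the classical reflection-unfolding argument. Hyperbolic reflections across the geodesic sides of $\Delta_\mathcal{K}(ABC)$ are isometries of $\mathbb{H}$, so reflecting the inscribed perimeter path $A'\to B'\to C'\to A'$ successively across the three sides produces a piecewise-geodesic path in $\mathbb{H}$ of the same total hyperbolic length, running from $A'$ to $R(A')$, where $R$ is a fixed hyperbolic isometry (the ordered product of the three reflections) depending only on $\Delta_\mathcal{K}(ABC)$. The length of such a piecewise path is bounded below by $d_\mathcal{K}(A', R(A'))$, with equality precisely when the unfolded path degenerates to a single geodesic segment; that degeneracy is in turn equivalent to the inscribed triangle having the equal-hyperbolic-angle (billiard) property at each of its three vertices. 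Minimizing $d_\mathcal{K}(A', R(A'))$ over $A'\in BC$ then pins down the optimal starting point, and the unique inscribed triangle realizing this bound is the minimizer.

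The main obstacle is verifying that the hyperbolic pedal triangle itself has the equal-angle property at each of its three vertices, because the familiar Euclidean proof via concyclic-quadrilateral and inscribed-angle arguments does not transfer cleanly to $\mathbb{H}$. I would establish this property by direct computation in the Klein-Beltrami model, using formula \eqref{eq:line-dist} together with the pole-polar characterization of hyperbolic perpendicularity to express each pair of hyperbolic angles at a pedal vertex as explicit algebraic functions of the coordinates of $A$, $B$, $C$ on the absolute, and checking the equality symbolically. Once the equal-angle property of the hyperbolic pedal triangle is in hand, the unfolding chain above identifies it as the unique minimizer, completing the proof that $S$ minimizes the hyperbolic perimeter of the Kleinization of its Cevian triangle.
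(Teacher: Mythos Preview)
Your approach via the hyperbolic Fagnano reflection-unfolding argument is exactly the paper's (very terse) proof, which simply asserts that under sequential reflections the orthic triangle unfolds to a single geodesic segment. The equal-angle property you correctly flag as the main obstacle can be handled more cheaply than by direct symbolic computation: since any two ideal hyperbolic triangles are isometric, it suffices to check the billiard property for the regular ideal triangle, where it is immediate from the threefold rotational and reflective symmetry.
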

\vspace{-.05in}
\begin{figure}[!hbt]
    \centering
    \includegraphics[width=0.37\textwidth]{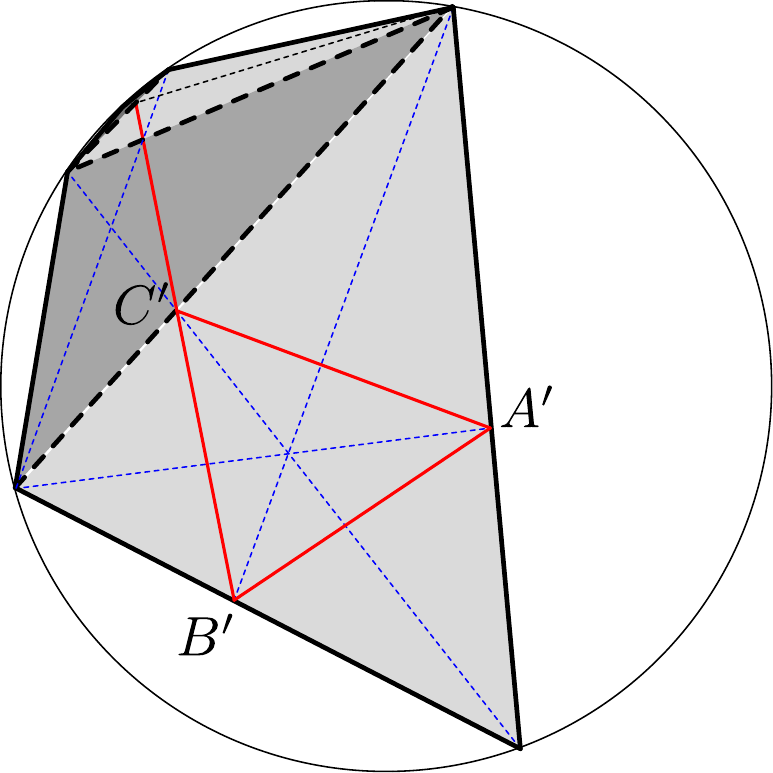}\qquad\qquad
    \includegraphics[width=0.37\textwidth]{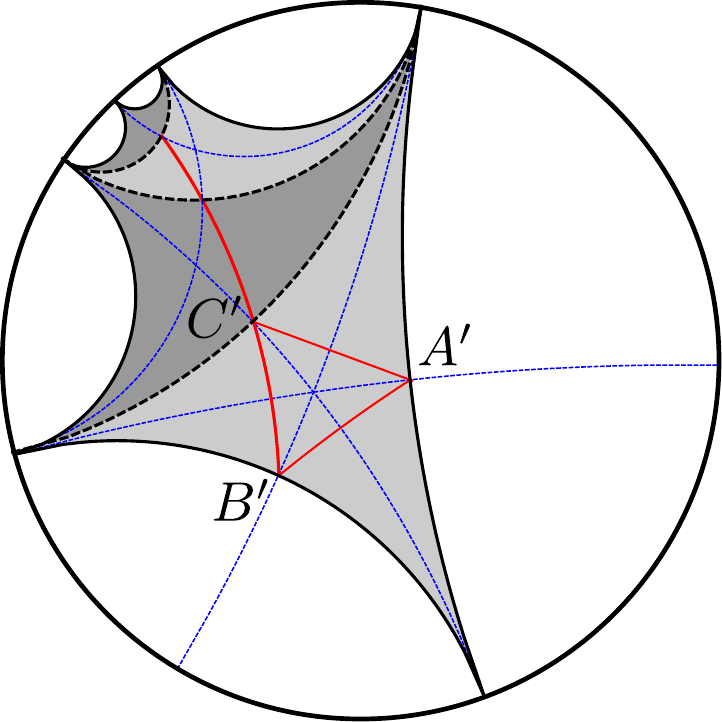}
    \caption{Another minimality property of the Symmedian point, from both the Klein-Beltrami (left) and the Poincar\'e (right) points of view.}
    \label{fig:minimal}
\end{figure}

\begin{proof}
This follows from the classic reflection argument about Fargano's orbit: under sequential hyperbolic reflections of the triangle $ABC$ in its sides, the sides of the triangle $A'B'C'$ form a geodesic interval (see Figure \ref{fig:minimal}).
\end{proof}

\section{Coordinates and hyperbolic minimality property of the symmedian}\label{sec:coordinates}

For an ideal polygon $\mathbf{P}=(p_1,\dots p_n)$ the following three quantities were introduced in \cite[\S 6.2]{AFIT}, and shown to be Hamiltonians for the infinitesimal diagonal action of 
the M\"obius group.
\begin{equation}\label{eq:hamiltonian}I_{\mathbf{P}}=\sum_{\ell=1}^n \dfrac{1}{p_{\ell}-p_{\ell+1}},\qquad J_{\mathbf{P}}=\frac 12\sum_{\ell=1}^n \dfrac{p_{\ell}+p_{\ell+1}}{p_{\ell}-p_{\ell+1}},\qquad  K_{\mathbf{P}}=\sum_{\ell=1}^n \dfrac{p_\ell p_{\ell+1}}{p_{\ell}-p_{\ell+1}} \end{equation} where indices are understood modulo $n$, that is, with $p_{n+1}:=p_1$.

\begin{thm}
The symmedian point $S_\mathbf{P}$ of a triangle $\mathbf{P}=(p_1,p_2,p_3)$ inscribed in the unit circle is 
\begin{equation}\label{eq:Sym}S_\mathbf{P}=\left(\dfrac{I_{\mathbf{P}}-K_{\mathbf{P}}}{I_{\mathbf{P}}+K_{\mathbf{P}}}, \dfrac{2J_{\mathbf{P}}}{I_{\mathbf{P}}+K_{\mathbf{P}}}\right).\end{equation}
\end{thm}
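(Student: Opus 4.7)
\emph{Plan.} My plan is to verify~\eqref{eq:Sym} by direct computation from the classical characterization of the symmedian point in barycentric coordinates,
\[
S_\mathbf{P}=\frac{a_1^2\,P_1+a_2^2\,P_2+a_3^2\,P_3}{a_1^2+a_2^2+a_3^2},
\]
where $a_i:=|P_j-P_k|$ denotes the length of the side opposite $P_i$, with $\{i,j,k\}=\{1,2,3\}$. From the parametrization~\eqref{eq:RP1}, a brief calculation gives
\[
|P_i-P_j|^2=\frac{4(p_i-p_j)^2}{(1+p_i^2)(1+p_j^2)}.
\]
Clearing the common scalar $4/\prod_\ell(1+p_\ell^2)$ from numerator and denominator, the weight at $P_i$ becomes $\alpha_i:=(p_j-p_k)^2(1+p_i^2)$, and the factor $(1+p_i^2)$ absorbs the denominators of $P_i^{(x)}=(1-p_i^2)/(1+p_i^2)$ and $P_i^{(y)}=2p_i/(1+p_i^2)$.

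\emph{Execution.} I then expand everything in the elementary symmetric polynomials $e_1,e_2,e_3$ of $(p_1,p_2,p_3)$. The auxiliary identities
\[
\sum_i(p_j-p_k)^2=2(e_1^2-3e_2),\quad \sum_i p_i^2(p_j-p_k)^2=2(e_2^2-3e_1e_3),\quad \sum_i p_i(p_j-p_k)^2=e_1e_2-9e_3
\]
yield the three barycentric quantities
\[
\sum_i\alpha_i=2(e_1^2-3e_2)+2(e_2^2-3e_1e_3),\quad \sum_i\alpha_i P_i^{(x)}=2(e_1^2-3e_2)-2(e_2^2-3e_1e_3),\quad \sum_i\alpha_i P_i^{(y)}=2(e_1e_2-9e_3).
\]
On the other side, placing the Hamiltonians of~\eqref{eq:hamiltonian} (with $n=3$) over the common Vandermonde-type denominator $D:=(p_1-p_2)(p_2-p_3)(p_3-p_1)$ gives
\[
I_\mathbf{P}\cdot D=3e_2-e_1^2,\qquad 2J_\mathbf{P}\cdot D=9e_3-e_1e_2,\qquad K_\mathbf{P}\cdot D=3e_1e_3-e_2^2.
\]
Multiplying both numerators and the denominator on the right-hand side of~\eqref{eq:Sym} by $-2D$ then recovers exactly the barycentric quantities on the left, so the two sides of~\eqref{eq:Sym} agree coordinatewise.

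\emph{Main obstacle and an alternative.} The only real obstacle is bookkeeping in the symmetric-function expansions above; each identity is routine in isolation, but signs and cross terms are easy to miscount, so I would perform them twice (or verify symbolically). A more conceptual alternative would exploit the M\"obius equivariance of both sides: the symmedian is the deKleinization of the hyperbolic orthocenter of $\Delta_\mathcal{K}(ABC)$, hence transforms equivariantly under the projective action of $\mathrm{PSL}_2(\mathbb{R})$ on the disc, while $(I_\mathbf{P},J_\mathbf{P},K_\mathbf{P})$ transform in the adjoint representation by~\cite{AFIT}, and the map $(I,J,K)\mapsto((I-K)/(I+K),2J/(I+K))$ is the standard equivariant identification of coefficient space with the disc. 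Since $\mathrm{PSL}_2(\mathbb{R})$ acts $3$-transitively on $\mathbb{RP}^1$, verifying~\eqref{eq:Sym} at one representative triangle (for instance the equilateral, where both sides visibly vanish) would then suffice. I would probably present the direct route, given its self-containment.
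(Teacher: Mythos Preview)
Your argument is correct; the symmetric-function identities and the final matching all check out. The paper, however, proceeds differently: rather than importing the barycentric formula $S_\mathbf{P}\propto a_1^2P_1+a_2^2P_2+a_3^2P_3$, it uses the tangential construction of the symmedian already set up in the introduction. It writes down the polar $P_\ell^*$ to each side $\xi_\ell$ explicitly as in~\eqref{eq:polar}, and then verifies by a slope comparison that the point~\eqref{eq:Sym} lies on each cevian $P_{\ell-1}P_\ell^*$ (vertex to polar of the opposite side). So the paper checks the defining concurrence directly, whereas you translate into an equivalent algebraic characterization and match both sides in the $e_1,e_2,e_3$ basis.

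Each route has its advantages. The paper's verification is entirely self-contained within the setup of \S\ref{sec:elements} and needs no external input about the symmedian; it also introduces the polar formula~\eqref{eq:polar}, which is reused later. Your approach requires the classical barycentric fact as a black box, but the intermediate identities $I_\mathbf{P}\cdot D=3e_2-e_1^2$, $2J_\mathbf{P}\cdot D=9e_3-e_1e_2$, $K_\mathbf{P}\cdot D=3e_1e_3-e_2^2$ are tidy and potentially reusable, and the bookkeeping is more structured than a raw slope check. Your sketched M\"obius-equivariance alternative is conceptually the cleanest and aligns well with the role the Hamiltonians of~\cite{AFIT} play later in the paper, but as you note it would require establishing the equivariance of the map $(I,J,K)\mapsto S$ first.
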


\begin{proof}
    This is an elementary computation. 
    The polar $P_\ell^*=(a_\ell^*,b_\ell^*)$ to the side $\xi_\ell$ is given by
      \begin{equation}\label{eq:polar}
        P_{\ell}^*=(a_\ell^*,b_\ell^*)= \left(\dfrac{1-p_{\ell}p_{\ell+1}}{1 + p_{\ell} p_{\ell+1}},\dfrac{p_{\ell} + p_{\ell+1}}{1 + p_{\ell} p_{\ell+1}}\right).
    \end{equation}
 Writing $P_\ell=(a_\ell,b_\ell)$ as in \eqref{eq:RP1} and $S_\mathbf{P}=(\alpha,\beta)$ as in \eqref{eq:Sym}, we compute from \eqref{eq:polar} that \[\frac{\beta-b_{\ell-1}^*}{\alpha-a_{\ell-1}^*}=\frac{b_{\ell-1}^*-b_\ell^*}{a_{\ell-1}^*-a_\ell^*}=\frac{b_\ell^*-\beta}{a_\ell^*-\alpha}.\] Hence $S_\mathbf{P}$ as given in \eqref{eq:Sym} is the common intersection of the (at least two) lines $P_{\ell-1} P_\ell^*$ such that $p_{\ell}p_{\ell+1}\neq -1$, so it is the symmedian of $\mathbf{P}$.\qedhere
\end{proof}
 
We can now show a second minimality property of the Kleinization of $S_\mathbf{P}$, which makes it into a kind of ``hyperbolic barycenter''.

\begin{thm}\label{th:Klei}
    The point $S=S_\mathbf{P}$ in \eqref{eq:Sym} minimizes the sum of the hyperbolic sines of the hyperbolic distances from $S$ to the sides of the Kleinization of $\, \mathbf{P}$.
\end{thm}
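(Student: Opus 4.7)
The plan is to combine an explicit expression for $\sinh d_\mathcal{K}(X,\xi_\ell)$ with the reverse Cauchy--Schwarz inequality for future-directed timelike vectors in Minkowski space $\mathbb{R}^{2,1}$.

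First, squaring \eqref{eq:line-dist} and subtracting $1$ gives
\[
\sinh d_\mathcal{K}(X,\xi_\ell) = \frac{|L_\ell(X)|}{|p_\ell - p_{\ell+1}|\sqrt{1-|X|^2}},\quad L_\ell(X) := x(1-p_\ell p_{\ell+1}) + y(p_\ell+p_{\ell+1}) - (1+p_\ell p_{\ell+1}).
\]
Since $|a| \geq -a$ for all $a \in \mathbb{R}$, summing over $\ell$ and grouping terms via the definitions \eqref{eq:hamiltonian} yields
\[
F(X) := \sum_\ell \sinh d_\mathcal{K}(X,\xi_\ell) \;\geq\; \frac{(I_\mathbf{P}+K_\mathbf{P}) - (I_\mathbf{P}-K_\mathbf{P})x - 2 J_\mathbf{P} y}{\sqrt{1-|X|^2}} =: R(X),
\]
with equality iff $L_\ell(X)/(p_\ell - p_{\ell+1}) \leq 0$ for every $\ell$; for a suitable orientation of $\mathbf{P}$, this equality is tight whenever $X$ lies in the Euclidean interior of $\mathbf{P}$, and in particular at $X = S_\mathbf{P}$.

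Second, I would lift $X$ to the upper hyperboloid in $\mathbb{R}^{2,1}$ by setting $u(X) := (1,x,y)/\sqrt{1-|X|^2}$, a future-directed unit timelike vector for the Lorentz form $\|(u_0,u_1,u_2)\|_L^2 := u_0^2 - u_1^2 - u_2^2$; setting $v := (I_\mathbf{P}+K_\mathbf{P},\, I_\mathbf{P}-K_\mathbf{P},\, 2J_\mathbf{P})$ gives the identifications $R(X) = \langle v, u(X)\rangle_L$ and $\|v\|_L^2 = 4(I_\mathbf{P} K_\mathbf{P} - J_\mathbf{P}^2)$. The hypothesis $S_\mathbf{P} \in \mathbb{H}$ deducible from \eqref{eq:Sym} unpacks as $(I_\mathbf{P}-K_\mathbf{P})^2 + 4J_\mathbf{P}^2 < (I_\mathbf{P}+K_\mathbf{P})^2$, i.e.\ $\|v\|_L^2 > 0$, so $v$ is timelike, and a vertex relabeling arranges $I_\mathbf{P}+K_\mathbf{P} > 0$, making $v$ future-directed. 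The reverse Cauchy--Schwarz inequality for future-directed timelike vectors then delivers
\[
R(X) = \langle v, u(X)\rangle_L \;\geq\; \|v\|_L\,\|u(X)\|_L = 2\sqrt{I_\mathbf{P} K_\mathbf{P} - J_\mathbf{P}^2},
\]
with equality iff $u(X)$ is a positive scalar multiple of $v$; by \eqref{eq:Sym} this pins down the minimizer as $X = S_\mathbf{P}$. Chaining $F(X) \geq R(X) \geq 2\sqrt{I_\mathbf{P} K_\mathbf{P} - J_\mathbf{P}^2}$ and observing that both inequalities are tight at $X = S_\mathbf{P}$ completes the argument.

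The principal obstacle I anticipate is reconciling the oriented differences $p_\ell - p_{\ell+1}$ that define the Hamiltonians \eqref{eq:hamiltonian} with the absolute values $|L_\ell|$, $|p_\ell - p_{\ell+1}|$ arising from the $\sinh$-distance formula. This sign bookkeeping is controlled by the classical fact that the symmedian lies strictly in the interior of its Euclidean triangle, which forces $F(S_\mathbf{P}) = R(S_\mathbf{P})$ at the purported minimizer and thereby closes the chain of inequalities.
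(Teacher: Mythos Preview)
Your proposal is correct and takes a genuinely different route from the paper. The paper's proof is calculus-based: it writes the signed quantity $\mathcal{Q}(X)=\sum_\ell q_\ell(X)$ as in \eqref{eq:quadrancy}, computes its gradient \eqref{eq:gradient}, rewrites the vanishing of the gradient as the vector equation $-(X\cdot S^\perp)X^\perp+X-S=0$, argues algebraically that the only solution in $\mathbb{H}$ is $X=S$, and finishes with a Hessian check. Your argument instead lifts to the hyperboloid model and applies the reverse Cauchy--Schwarz inequality in $\mathbb{R}^{2,1}$ to the pair $u(X)=(1,x,y)/\sqrt{1-|X|^2}$ and $v=(I+K,I-K,2J)$, obtaining the sharp global lower bound $F(X)\ge 2\sqrt{IK-J^2}$ directly and identifying the equality case as $X=S_\mathbf{P}$ without differentiation. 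Your approach is more conceptual, delivers the minimum value explicitly, and makes the hyperbolic-isometry invariance manifest (it is literally Lorentz invariance). The paper's approach, on the other hand, produces the explicit gradient formula \eqref{eq:gradient}, and this formula---not merely the location of the minimizer---is what drives the Interpolation Lemma (Lemma~\ref{lm:interpolation}) and the concurrence results in \S\ref{sec:polygons} and \S\ref{sec:concurrence}; so the paper's computation is doing double duty. Your sign bookkeeping is handled correctly: choosing the orientation with $I+K>0$ makes $v$ future-directed, and since $S_\mathbf{P}$ lies in the interior of the triangle all the $L_\ell(S_\mathbf{P})/(p_\ell-p_{\ell+1})$ share a sign, which the negativity of $\mathcal{Q}(S_\mathbf{P})=-R(S_\mathbf{P})$ forces to be non-positive, closing the first inequality at $S_\mathbf{P}$ as you claim.
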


\begin{proof}
    Using \eqref{eq:line-dist} together with the identity $\cosh^{-1}(t)=\sinh^{-1}(\sqrt{t^2-1})$ yields, for the hyperbolic distance $d_\mathcal{K}(X,\xi_\ell)$ from a point $X=(x,y)$ to the side $\xi_\ell=P_{\ell} P_{\ell+1}=\xi(p_\ell,p_{\ell +1})$ (where we once again consider indices modulo $n$ as in \eqref{eq:hamiltonian}), that
    \begin{equation}\label{eq:quadrancy}
        q_\ell(X):=\sinh\bigl(d_\mathcal{K}(X,\xi_\ell)\bigr)=
        \dfrac{((I_\ell-K_\ell) x +2 J_\ell y - (I_\ell+K_\ell))}{\sqrt{1-(x^2+y^2)}}
    \end{equation}
    where $I_{\ell}:=\dfrac{1}{p_{\ell}-p_{\ell+1}}$; $2J_{\ell}:=\dfrac{p_{\ell}+p_{\ell+1}}{p_{\ell}-p_{\ell+1}}$; and $K_{\ell}:=\dfrac{p_{\ell} p_{\ell+1}}{p_{\ell}-p_{\ell+1}}$; so that \begin{equation}\label{eq:IJK-sums}\textstyle I:=\sum_\ell I_\ell, \qquad J:=\sum_\ell J_\ell, \qquad K:=\sum_\ell K_\ell\end{equation} coincide with the expressions in \eqref{eq:hamiltonian}. Writing $\mathcal{Q}(X):=\sum_\ell q_\ell(X)$, we compute the gradient
        \begin{equation}\label{eq:gradient}
        \nabla(\mathcal{Q}(X)) = 
        (1-|X|^2)^{-\frac{3}{2}}
        \begin{pmatrix}
            \Bigl((I-K)y - (2J) x\Bigr)y +(I+K)x-(I-K)\vphantom{\displaystyle \sum_i}\\
            \Bigl((2J)x-(I-K)y\Bigr) x+(I+K)y-(2J)
        \end{pmatrix}.
    \end{equation}
    We claim that the only point $X$ with $|X|\neq 1$ that nullifies the polynomial part of the gradient $(1-|X|^2)^{\frac{3}{2}}\nabla(\mathcal{Q}(X))$ is $X=S$. Denoting by $W^\perp=(-z,w)$ the counterclockwise rotation of \mbox{$W=(w,z)$} by $\pi/2$,
    setting $\nabla(\mathcal{Q}(X)) =0$ in \eqref{eq:gradient} is equivalent to
    \begin{equation}\label{eq:better-gradient}-(X\cdot S^\perp) X^\perp+X-S=(0,0).\end{equation} 
    For $X$ any solution to \eqref{eq:better-gradient}, taking the dot product of \eqref{eq:better-gradient} with $X$ yields $|X|^2=S\cdot X$, which implies that either $X=(0,0)$ (in which case $S=(0,0)=X$, by \eqref{eq:better-gradient}, as desired), or else $S=X+sX^\perp$ for some $s\in\mathbb{R}$, and therefore $S^\perp=X^\perp-sX$. Then \eqref{eq:better-gradient} becomes $s(|X|^2-1)X^\perp=(0,0).$ This further implies that either (i) $s=0$; or (ii) $|X|^2=1$; or (iii) $X^\perp=(0,0)$. In case (i) we find that $X=S$; case (ii) is forbidden; 
    and in case (iii) we again find that $X=(0,0)=S$. Thus $X=S$ is the only point  not in $\mathbf{O}$ satisfying \eqref{eq:better-gradient}. 
    To conclude, observe that the Hessian determinant of $\mathcal{Q}(X)$ at $S$ is $\frac{(I+K)^2}{(1-|S|^2)^2}>0.$\qedhere
    \end{proof}

\section{Hyperbolic barycenters of ideal polygons}\label{sec:polygons}

The statement and the proof of Theorem~\ref{th:Klei} both generalize immediately to ideal hyperbolic polygons $\mathbf{P}$ beyond triangles.
Since $S_\mathbf{P}$ in \eqref{eq:Sym} minimizes the sum of the hyperbolic sines of the hyperbolic distances to the sides of $\mathbf{P}$, we find it natural to call it the \emph{hyperbolic barycenter} of $\mathbf{P}$.

     \begin{rmk} \label{rm:positive} It is easy to see that  $I_\mathbf{P}+K_{\mathbf{P}}\neq 0$, not just generically, but always when $\mathbf{P}$ is convex. Under the parameterization of \S\ref{sec:elements}, each $I_\ell+K_\ell=\frac{1+p_{\ell}p_{\ell+1}}{p_{\ell}-p_{\ell+1}}=\cot\left(\frac{\varphi_\ell-\varphi_{\ell+1}}{2}\right)$. At most one $I_\ell+K_\ell$ has different sign from the others (only in case $|\varphi_\ell -\varphi_{\ell+1}|\geq \pi$). Even in this exceptional case, since $|\varphi_{\ell'}-\varphi_{\ell'+1}|<2\pi-|\varphi_{\ell}-\varphi_{\ell+1}|$ for any other $\ell'\neq\ell$, we obtain immediately that $|I_{\ell'}+K_{\ell'}|>|I_{\ell}+K_{\ell}|$.  
    \end{rmk}

 For $\Xi=\{\xi_{\ell_1},\dots,\xi_{\ell_k}\}$ a non-empty set of (not necessarily contiguous) sides of $\mathbf{P}$, define 
 \[I_{\Xi}:=\sum_{\xi_\ell\in\Xi} I_\ell;\qquad J_{\Xi}:=\sum_{\xi_\ell\in\Xi} J_\ell;\qquad K_{\Xi}:=\sum_{\xi_\ell\in\Xi} K_\ell\] analogously as in \eqref{eq:IJK-sums}. We similarly let, with $q_\ell(X)=\sinh{\bigl(d_\mathcal{K}(X,\xi_\ell)\bigr)}$ as in \eqref{eq:quadrancy}, \begin{equation}\label{eq:xi-notation}\mathcal{Q}_\Xi(X)=\sum_{\xi_\ell\in\Xi}q_\ell(X)\qquad\text{and}\qquad S_\Xi=\left(\frac{I_\Xi-K_\Xi}{I_\Xi + K_\Xi},\frac{2J_\Xi}{I_\Xi + K_\Xi}\right).\end{equation} In case $\Xi$ is the set of all sides of $\mathbf{P}$, we still simply write $S_\mathbf{P}$ (as in \eqref{eq:Sym}) instead of $S_\Xi$, $\mathcal{Q}_\mathbf{P}$ instead of $\mathcal{Q}_\Xi$, etc. The computations in the proof of Theorem~\ref{th:Klei} immediately yield the following.

\begin{cor} \label{cor:linear-gradient}
    Let $\Xi=\{\xi_{\ell_1},\dots,\xi_{\ell_k}\}$ be a non-empty set of sides. Then $S_\Xi$ nullifies the polynomial part of the gradient $(1-|X|^2)^{\frac{3}{2}}\nabla(\mathcal{Q}_\Xi)$ as in \eqref{eq:gradient}. \end{cor}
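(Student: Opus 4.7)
The plan is to deduce this corollary as a direct bookkeeping consequence of the proof of Theorem~\ref{th:Klei}, leveraging two observations: (i) the expression in \eqref{eq:quadrancy} for each side quadrance $q_\ell(X)$ is affine linear in the triple $(I_\ell-K_\ell,\,2J_\ell,\,I_\ell+K_\ell)$, with a common denominator $\sqrt{1-|X|^2}$ that does not depend on $\ell$; and (ii) the vanishing condition for the polynomial part of the gradient that was derived in the proof of Theorem~\ref{th:Klei} depends on the data of $\mathbf{P}$ only through the three aggregate sums $(I,J,K)$ and the resulting point $S$.

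First, I would sum \eqref{eq:quadrancy} over $\xi_\ell\in\Xi$ only, obtaining an expression for $\mathcal{Q}_\Xi(X)$ of exactly the same shape as $\mathcal{Q}_\mathbf{P}(X)$, but with the sums $(I,J,K)$ replaced throughout by their partial counterparts $(I_\Xi,J_\Xi,K_\Xi)$. Taking the gradient, one therefore obtains the very same formula \eqref{eq:gradient} with the same substitution, so the polynomial part of $(1-|X|^2)^{3/2}\nabla(\mathcal{Q}_\Xi)$ is literally the polynomial expression in \eqref{eq:gradient} after the replacement $(I,J,K)\mapsto(I_\Xi,J_\Xi,K_\Xi)$. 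Correspondingly, the system \eqref{eq:better-gradient} becomes
\[
-(X\cdot S_\Xi^\perp)\,X^\perp + X - S_\Xi = (0,0),
\]
with $S_\Xi$ as defined in \eqref{eq:xi-notation}.

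Second, I would simply substitute $X=S_\Xi$ into this identity: the left-hand side becomes $-(S_\Xi\cdot S_\Xi^\perp)\,S_\Xi^\perp + S_\Xi - S_\Xi$, which vanishes because the inner product of any vector with its $\pi/2$-rotation is zero. This shows that $S_\Xi$ nullifies the polynomial part of the gradient, completing the argument.

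There is no real obstacle here; the content of the corollary is essentially the linearity observation that allows the nullification computation in the proof of Theorem~\ref{th:Klei} to be transported verbatim from the full polygon to any non-empty subset of sides. The only subtlety worth flagging is that this statement implicitly assumes $I_\Xi+K_\Xi\neq 0$ so that $S_\Xi$ is defined in \eqref{eq:xi-notation}; the analogous nonvanishing for the full polygon is guaranteed by Remark~\ref{rm:positive} when $\mathbf{P}$ is convex, but for arbitrary subsets $\Xi$ it must be presupposed.
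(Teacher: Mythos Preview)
Your proposal is correct and takes essentially the same approach as the paper, which simply states that the computations in the proof of Theorem~\ref{th:Klei} immediately yield the corollary; you have just spelled out those computations in detail. Your remark about the implicit assumption $I_\Xi+K_\Xi\neq 0$ is a worthwhile caveat that the paper leaves unstated at this point.
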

  
    \begin{rmk}\label{rm:twosidesminimizer}
        The point $S_\Xi$ need not be in $\mathbb{H}$. For example, $S_{\{\xi\}}$ is the polar to $\xi$, which is outside of the unit disk, and $S_{\{\xi_1,\xi_2\}}=P_2 
        $, 
        the common ideal vertex 
        of the sides $\xi_1$ and $\xi_2$, which lies on $\mathbf{O}$. 
    \end{rmk}

    We set aside a more systematic discussion of the geometric meaning of $S_\Xi$ for arbitrary $\Xi$, 
    in favor of focusing on the most immediately relevant configuration properties of the points $S_\Xi$ for different interrelated choices of $\Xi$.

\begin{lm}[Interpolation Lemma]\label{lm:interpolation}
    Let $\boldsymbol{\Xi}=\{\Xi_1,\dots,\Xi_\nu\}$, where the $\Xi_j$ are (not necessarily disjoint) sets of sides of $\mathbf{P}$, such that $\sum_{j=1}^\nu(I_{\Xi_j}+K_{\Xi_j})\neq 0$. Denote $\mathcal{Q}_{\boldsymbol{\Xi}}(X):=\sum_{j=1}^\nu\mathcal{Q}_{\Xi_j}(X)$ and 
    \begin{equation}\label{eq:sxialphabeta} S_{\boldsymbol{\Xi}}:=\left(\sum_{j=1}^\nu (I_{\Xi_j}+K_{\Xi_j})\right)^{-1}\sum_{j=1}^\nu(I_{\Xi_j}+K_{\Xi_j})\cdot S_{\Xi_j}.\end{equation} Then $S_{\boldsymbol{\Xi}}$ nullifies the polynomial part of the gradient  $(1-|X|^2)^{\frac{3}{2}}\nabla(\mathcal{Q}_{\boldsymbol{\Xi}})$.
\end{lm}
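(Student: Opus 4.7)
The plan is to exploit a clean vectorial rewriting of the polynomial gradient in \eqref{eq:gradient} that makes $I_\Xi+K_\Xi$ appear as a global scalar factor and $S_\Xi$ appear affinely; once this is in hand, the lemma reduces to an averaging/linearity argument.

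First I would sharpen Corollary~\ref{cor:linear-gradient} by establishing, for every non-empty subset $\Xi$ of sides, the identity
\[(1-|X|^2)^{3/2}\nabla(\mathcal{Q}_\Xi)(X) \;=\; (I_\Xi+K_\Xi)\bigl[-(X\cdot S_\Xi^\perp)X^\perp + X - S_\Xi\bigr].\]
This is essentially a one-line algebraic check already implicit in the passage from \eqref{eq:gradient} to \eqref{eq:better-gradient} in the proof of Theorem~\ref{th:Klei}: substitute $(I_\Xi-K_\Xi,\,2J_\Xi)=(I_\Xi+K_\Xi)\,S_\Xi$ into each row of \eqref{eq:gradient} and factor the scalar out.

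Next, writing $c_j := I_{\Xi_j}+K_{\Xi_j}$ and $C := \sum_{j=1}^\nu c_j \neq 0$, I would sum the displayed identity over $j$. Linearity of $\nabla$ on the left, together with linearity in $W$ of both $W\mapsto W^\perp$ and $W\mapsto X\cdot W$ on the right, collapses the sum into the same vectorial form with $S_\Xi$ replaced by $C^{-1}\sum_j c_j S_{\Xi_j}$, which is precisely $S_{\boldsymbol{\Xi}}$ as defined in \eqref{eq:sxialphabeta}. Evaluating at $X = S_{\boldsymbol{\Xi}}$ then annihilates the bracket, since $W\cdot W^\perp = 0$ for every $W$, which yields the desired vanishing.

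The main obstacle I foresee is the opening refactoring step: the two coordinate entries in \eqref{eq:gradient} have to be repackaged in a genuinely coordinate-free form so that the scalar $I_\Xi+K_\Xi$ can be cleanly pulled out and $S_\Xi$ can be read off as the remaining vector parameter. Once that reformulation is achieved, the rest of the proof is pure linearity. The nonvanishing hypothesis $C\neq 0$ plays at the aggregated level precisely the role that Remark~\ref{rm:positive} plays for a single convex polygon, ensuring $S_{\boldsymbol{\Xi}}$ is well defined.
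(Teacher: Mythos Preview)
Your proposal is correct and follows essentially the same route as the paper: both recognize that the polynomial part of $\nabla\mathcal{Q}_{\boldsymbol{\Xi}}$ has exactly the form \eqref{eq:gradient} with $I,J,K$ replaced by the aggregated quantities $\sum_j I_{\Xi_j}$, $\sum_j J_{\Xi_j}$, $\sum_j K_{\Xi_j}$, whence the vanishing condition becomes \eqref{eq:better-gradient} with $S=S_{\boldsymbol{\Xi}}$, which is visibly satisfied at $X=S_{\boldsymbol{\Xi}}$. Your version just makes the linearity step explicit by first recording the per-$\Xi_j$ vectorial identity and then summing, whereas the paper compresses this into a one-line reference back to the proof of Theorem~\ref{th:Klei}. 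One small cosmetic point: writing the per-$\Xi_j$ identity with the factor $(I_{\Xi_j}+K_{\Xi_j})$ outside and $S_{\Xi_j}$ inside tacitly assumes each $I_{\Xi_j}+K_{\Xi_j}\neq 0$; since the product $(I_{\Xi_j}+K_{\Xi_j})S_{\Xi_j}=(I_{\Xi_j}-K_{\Xi_j},\,2J_{\Xi_j})$ is what actually appears and is always well defined, this is harmless.
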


\begin{proof} As in the proof of Theorem~\ref{th:Klei}, we see that $X$ nullifying the polynomial part of the gradient $(1-|X|^2)^{\frac{3}{2}}\nabla(\mathcal{Q}_{\boldsymbol{\Xi}})$ is equivalent to $X$ satisfying \eqref{eq:better-gradient} with $S=S_{\boldsymbol{\Xi}}$ as in \eqref{eq:sxialphabeta}. \qedhere
\end{proof}

\begin{cor} \label{rm:metathm}
The points $S_{\{\Xi_1,\Xi_2\}}$, $S_{\Xi_1}$, and $S_{\Xi_2}$ are collinear. Moreover, if $\mathbf{P}$ is convex and each $|\Xi_j|\geq 2$ then $S_{\boldsymbol{\Xi}}$ is in the convex hull of the $S_{\Xi_j}$. 
\end{cor}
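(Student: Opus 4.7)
The plan is to deduce both assertions directly from the Interpolation Lemma~\ref{lm:interpolation}, which realizes $S_{\boldsymbol{\Xi}}$ as an affine combination $\sum_{j=1}^\nu w_j\, S_{\Xi_j}$ with coefficients $w_j := (I_{\Xi_j}+K_{\Xi_j})\,/\,\sum_{k=1}^\nu(I_{\Xi_k}+K_{\Xi_k})$ that automatically sum to $1$. Specializing to $\nu=2$ settles the collinearity statement on the spot, since any affine combination of two points lies on the line through them.

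For the convex-hull statement my task reduces to showing that the weights $w_j$ are nonnegative, which in turn amounts to proving that all the quantities $I_{\Xi_j}+K_{\Xi_j}$ share a common sign under the hypotheses of the corollary. Here I would invoke Remark~\ref{rm:positive}, according to which each single-side contribution $I_\ell+K_\ell$ equals $\cot((\varphi_\ell-\varphi_{\ell+1})/2)$, and in the convex case all of these share a common sign with at most one exception, indexed say by $\ell^*$; moreover whenever such an exception arises, $|I_{\ell^*}+K_{\ell^*}|$ is strictly dominated by every other $|I_{\ell'}+K_{\ell'}|$.

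Armed with this, for each $\Xi_j$ with $|\Xi_j|\geq 2$ I would split cases: if $\ell^*\notin\Xi_j$ then the common sign is inherited by $I_{\Xi_j}+K_{\Xi_j}$ at once; if $\ell^*\in\Xi_j$ then, because $|\Xi_j|\geq 2$, there is at least one companion index $\ell'\in\Xi_j\setminus\{\ell^*\}$, and its contribution alone already overwhelms the $\ell^*$-contribution in magnitude while carrying the common sign, with any further terms only reinforcing that sign. This yields a common sign for all the $I_{\Xi_j}+K_{\Xi_j}$, the $w_j$ are then nonnegative with sum $1$, and $S_{\boldsymbol{\Xi}}$ is thereby exhibited as a bona fide convex combination of the $S_{\Xi_j}$. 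The main — and essentially only — subtlety is the exceptional-index bookkeeping just described, and this is precisely where the hypothesis $|\Xi_j|\geq 2$ is needed: a singleton $\{\ell^*\}$ could flip the sign, and in fact would place $S_{\Xi_j}$ outside the unit disk, per Remark~\ref{rm:twosidesminimizer}, so that ``convex hull'' would cease to be the right notion in the first place.
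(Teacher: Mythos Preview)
Your proof is correct and follows essentially the same approach as the paper: read off from \eqref{eq:sxialphabeta} that $S_{\boldsymbol{\Xi}}$ is an affine combination of the $S_{\Xi_j}$ (giving collinearity for $\nu=2$), and then invoke Remark~\ref{rm:positive} to see that the weights are positive under the stated hypotheses. Your exceptional-index bookkeeping simply spells out in detail what the paper abbreviates as ``cf.~Remark~\ref{rm:positive}''.
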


\begin{proof}
    In \eqref{eq:sxialphabeta}, $S_{\boldsymbol{\Xi}}$ is a weighted sum of the $S_{\Xi_j}$ with weights summing to $1$. Moreover, if $\mathbf{P}$ is convex and each $|\Xi_j|\geq 2$ then the $I_{\Xi_j} + K_{\Xi_j}$ all have the same sign (cf.~Remark~\ref{rm:positive}), so these weights are positive. 
\end{proof}

     Corollary~\ref{rm:metathm} is one of the main tools that allows for the geometric construction of hyperbolic barycenters. 
     It is educational to see it in action in some trivial cases.

     For any ideal $n$-gon $\mathbf{P}$, we can take $\boldsymbol{\Xi}=\{\Xi_1,\dots,\Xi_n\}$, where $\Xi_\ell=\{\xi_{\ell-1},\xi_\ell\}$, the two sides that meet at the ideal vertex $P_\ell$.
     Then $\mathcal{Q}_{\boldsymbol{\Xi}}(X)=2\mathcal{Q}_\mathbf{P}(X)$ and each $S_{\Xi_\ell}=P_\ell$ (cf.~Remark~\ref{rm:twosidesminimizer}). Thus if $\mathbf{P}$ is convex then and $S_{\boldsymbol{\Xi}}=S_\mathbf{P}$ is not only in $\mathbb{H}$, but actually in the interior of $\mathbf{P}$ by Corollary~\ref{rm:metathm}, which is neither surprising nor immediately apparent from the algebraic \mbox{expression \eqref{eq:Sym}.}
     
     In the tired case of triangles $\mathbf{P}=(p_1,p_2,p_3)$, letting $\Xi_1=\{\xi_\ell\}$ and $\Xi_2=\{\xi_{\ell-1},\xi_{\ell+12}\}$, we have $\mathcal{Q}_\mathbf{P}(X)=\mathcal{Q}_{\Xi_1}(X)+\mathcal{Q}_{\Xi_2}(X)$, $S_{\Xi_1}$ is the polar $P_\ell^*$ to $\xi_\ell$, and $S_{\Xi_2}=P_{\ell-1}$ (cf.~Remark~\ref{rm:twosidesminimizer}). Thus Corollary~\ref{rm:metathm} says in this case that $S_\mathbf{P}=S_{\{\Xi_1,\Xi_2\}}$ lies on $P_{\ell+2} P_\ell^*$, as expected. A first non-trivial consequence of Corollary~\ref{rm:metathm} is that this argument works all the same even if the first and third sides do not meet to form a triangle, as formalized in the following result.

\begin{lm}\label{lm:three-sides-minimizer}
    Let $\Xi=\{\xi_{\ell-1},\xi_{\ell},\xi_{\ell+1}\}$ be any set of three consecutive sides. Then $S_{\Xi}$ is the intersection of the two hyperbolic altitudes, from $P_{\ell}$ to $\xi_{\ell+1}$, and from $P_{\ell+1}$ to $\xi_{\ell-1}$.
\end{lm}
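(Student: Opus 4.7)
The plan is to apply Corollary~\ref{rm:metathm} twice, using two different partitions of $\Xi$ into two subsets, so as to pin $S_\Xi$ down as the intersection of two lines that turn out to be precisely the two altitudes in the statement.

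For the first partition, take $\Xi_1 := \{\xi_{\ell-1}, \xi_\ell\}$ and $\Xi_2 := \{\xi_{\ell+1}\}$. Since $\Xi_1$ and $\Xi_2$ are disjoint with union $\Xi$, additivity of $I$, $J$, $K$ together with the formulas \eqref{eq:xi-notation} and \eqref{eq:sxialphabeta} gives $S_{\{\Xi_1, \Xi_2\}} = S_\Xi$ at the level of algebra. By Remark~\ref{rm:twosidesminimizer}, $S_{\Xi_1}$ is the shared ideal vertex $P_\ell$ of $\xi_{\ell-1}$ and $\xi_\ell$, while $S_{\Xi_2}$ is the polar $P_{\ell+1}^*$ to $\xi_{\ell+1}$. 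Corollary~\ref{rm:metathm} then forces $S_\Xi$ to be collinear with $P_\ell$ and $P_{\ell+1}^*$; and since any chord through a polar meets the corresponding side orthogonally in $\mathbb{H}$ (\S\ref{sec:elements}), this line restricts inside the disc to the hyperbolic altitude from $P_\ell$ to $\xi_{\ell+1}$.

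Re-partitioning as $\Xi_1' := \{\xi_\ell, \xi_{\ell+1}\}$ and $\Xi_2' := \{\xi_{\ell-1}\}$ and running the same argument yields $S_{\Xi_1'} = P_{\ell+1}$ and $S_{\Xi_2'} = P_{\ell-1}^*$, so $S_\Xi$ also lies on the altitude from $P_{\ell+1}$ to $\xi_{\ell-1}$. These two altitudes are distinct geodesics (they drop from different vertices onto different sides), hence meet in at most one point, which must therefore be $S_\Xi$.

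The main obstacle I foresee is administrative rather than conceptual: bookkeeping around degeneracies. Corollary~\ref{rm:metathm} tacitly wants each $I_{\Xi_j} + K_{\Xi_j}$ to be nonzero so that $S_{\Xi_j}$ is a finite point, and for a single side this quantity equals $(1 + p_{\ell+1}p_{\ell+2})/(p_{\ell+1} - p_{\ell+2})$, which vanishes precisely when $\xi_{\ell+1}$ passes through the origin of the disc. In that boundary case $P_{\ell+1}^*$ recedes to infinity and one must reinterpret ``the line through $P_\ell$ and $P_{\ell+1}^*$'' as the Euclidean perpendicular through $P_\ell$ to $\xi_{\ell+1}$, which in the Klein model coincides with the hyperbolic altitude precisely because $\xi_{\ell+1}$ is a diameter. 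Modulo this limit, the two applications of Corollary~\ref{rm:metathm} close the proof.
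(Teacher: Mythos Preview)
Your proof is correct and follows essentially the same route as the paper's: both arguments apply Corollary~\ref{rm:metathm} to the two partitions $\{\xi_{\ell-1},\xi_\ell\}\cup\{\xi_{\ell+1}\}$ and $\{\xi_\ell,\xi_{\ell+1}\}\cup\{\xi_{\ell-1}\}$, identifying the resulting points via Remark~\ref{rm:twosidesminimizer} as $P_\ell,P_{\ell+1}^*$ and $P_{\ell+1},P_{\ell-1}^*$ respectively. Your added remarks on the distinctness of the altitudes and on the degenerate case $I_{\{\xi\}}+K_{\{\xi\}}=0$ (when a side is a diameter) go slightly beyond what the paper writes, but are consistent with it.
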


\begin{proof}
    Partition $\Xi$ in two different ways: $\boldsymbol{\Xi}_1=\{\Xi_{11},\Xi_{12}\}$ and $\boldsymbol{\Xi}_2=\{\Xi_{21},\Xi_{22}\}$, where $\Xi_{11}=\{\xi_{\ell-1}\}$ and $\Xi_{12}=\{\xi_{\ell},\xi_{\ell+1}\}$, and $\Xi_{21}=\{\xi_{\ell+1}\}$ and $\Xi_{22}=\{\xi_{\ell-1},\xi_{\ell}\}$. Then $S_{\boldsymbol{\Xi}_1}=S_\Xi=S_{\boldsymbol{\Xi}_2}$ by Lemma~\ref{lm:interpolation}, whence $S_\Xi$ lies on both geodesic segments $S_{\Xi_{11}}S_{\Xi_{12}}$ and $S_{\Xi_{21}}S_{\Xi_{22}}$ by Corollary~\ref{rm:metathm}. To conclude, we observe as in Remark~\ref{rm:twosidesminimizer} that $S_{\Xi_{11}}=P_{\ell-1}^*$ and $S_{\Xi_{12}}=P_{\ell+1}$, and $S_{\Xi_{21}}=P_{\ell+1}^*$ and $S_{\Xi_{22}}=P_{\ell}$.
\end{proof}

\begin{rmk}
    Lemma~\ref{lm:three-sides-minimizer} is the third in a recursive series of constructions initiated in Remark~\ref{rm:twosidesminimizer}. For $\Xi=\{\xi_0,\dots,\xi_\ell\}$ a set of $\ell+1$ consecutive sides, we find that $S_\Xi$ is the intersection of $P_0^* S_{\Xi_0}$ and $S_{\Xi_\ell}P_\ell^*$, where $\Xi_m:=\Xi-\{\xi_m\}$. More generally, the geodesic segments $S_{\{\xi_0,\dots,\xi_{m-1}\}}S_{\{\xi_{m},\dots,\xi_{\ell}\}}$ for $m=1,\dots,\ell$ are all concurrent at $S_\Xi$.
\end{rmk}

As in \cite[\S2.1]{AFIT}, we say that two ideal $n$-gons $\mathbf{P}$ and $\mathbf{Q}$ are $\alpha$\emph{-related} if the cross-ratio $[p_\ell,q_\ell,p_{\ell+1},q_{\ell+1}]=\alpha$ for every $\ell$, with the convention that the polygons have mutually orthogonal sides when $\alpha=-1$. Note that $\mathbf{P}$ and $\mathbf{Q}$ are $\alpha$-related (for some $\alpha$) if and only if the hyperbolic angles between the corresponding sides of $\mathbf{P}$ and $\mathbf{Q}$ are all equal.  

\begin{thm}\label{th:alpha-related}
If two ideal polygons $\mathbf{P}$ and $\mathbf{Q}$ are $\alpha$-related for some $\alpha\in\mathbb{R}$, then $S_\mathbf{P}=S_\mathbf{Q}$.
\end{thm}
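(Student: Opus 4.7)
The plan is to exploit the fact that the coordinate formula \eqref{eq:Sym} for $S_\mathbf{P}$ depends only on the Hamiltonian triple $(I_\mathbf{P},J_\mathbf{P},K_\mathbf{P})$ (up to simultaneous rescaling), so it would suffice to show that these quantities are individually preserved: $I_\mathbf{P}=I_\mathbf{Q}$, $J_\mathbf{P}=J_\mathbf{Q}$, and $K_\mathbf{P}=K_\mathbf{Q}$. This is natural to expect, since per \cite{AFIT} these are integrals of motion for the integrable cross-ratio dynamics, whose discrete time step is exactly passage from $\mathbf{P}$ to an $\alpha$-related $\mathbf{Q}$. To handle all three equalities uniformly, I would package them into the single generating polynomial
\[F_\mathbf{P}(x)\;:=\;\sum_{\ell=1}^n \frac{(x-p_\ell)(x-p_{\ell+1})}{p_\ell-p_{\ell+1}}\;=\;I_\mathbf{P}\,x^2\,-\,2J_\mathbf{P}\,x\,+\,K_\mathbf{P},\]
so that the entire theorem reduces to the polynomial identity $F_\mathbf{P}(x)=F_\mathbf{Q}(x)$.

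Next I would unpack the condition $[p_\ell,q_\ell,p_{\ell+1},q_{\ell+1}]=\alpha$ by a short algebraic expansion into the symmetric equivalent form
\[(p_\ell-p_{\ell+1})(q_\ell-q_{\ell+1})\;=\;\mu\,(p_\ell-q_\ell)(p_{\ell+1}-q_{\ell+1}),\qquad \mu:=\frac{\alpha}{\alpha-1},\]
which brings the differences $r_\ell:=p_\ell-q_\ell$ to the foreground. (The degenerate value $\alpha=1$, which forces $r_\ell r_{\ell+1}=0$ for every $\ell$, is handled separately, or absorbed by a Zariski density argument since $S_\mathbf{P}=S_\mathbf{Q}$ is a rational identity in the vertex coordinates.) Substituting $u_\ell:=x-p_\ell$ and $v_\ell:=x-q_\ell$ (so that $v_\ell-u_\ell=r_\ell$, $u_{\ell+1}-u_\ell=p_\ell-p_{\ell+1}$, and similarly for $v$), a direct computation of the $\ell$-th summand of $F_\mathbf{P}(x)-F_\mathbf{Q}(x)$ over the common denominator $\mu\, r_\ell r_{\ell+1}$ should collapse it to the telescoping difference
\[\frac{1}{\mu}\left(\frac{u_\ell\,v_\ell}{r_\ell}\,-\,\frac{u_{\ell+1}\,v_{\ell+1}}{r_{\ell+1}}\right).\]
Summing over $\ell$ modulo $n$ gives zero, yielding $F_\mathbf{P}=F_\mathbf{Q}$ as polynomials in $x$ and hence $S_\mathbf{P}=S_\mathbf{Q}$.

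The only substantive obstacle I anticipate is identifying the right substitution to make the telescoping visible. Once the $\alpha$-relation is recast in terms of the differences $r_\ell$, and one recognizes that $F_\mathbf{P}$ packages $I_\mathbf{P}$, $J_\mathbf{P}$, $K_\mathbf{P}$ simultaneously, the telescoping identity emerges almost forcibly from the $\mathbf{P}\leftrightarrow\mathbf{Q}$ symmetry that $\alpha$-relation enforces; everything else is routine bookkeeping. As a sanity check, the same kind of telescoping (with the quadratic numerator $u_\ell v_\ell$ replaced by $1$, $u_\ell+v_\ell$, or $u_\ell v_\ell$ individually) recovers the three coordinate-wise identities for $I$, $J$, $K$ separately.
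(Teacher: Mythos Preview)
Your proposal is correct. The high-level strategy coincides with the paper's: both reduce the claim to the invariance $I_\mathbf{P}=I_\mathbf{Q}$, $J_\mathbf{P}=J_\mathbf{Q}$, $K_\mathbf{P}=K_\mathbf{Q}$ under $\alpha$-relation, and then read off $S_\mathbf{P}=S_\mathbf{Q}$ from \eqref{eq:Sym}. The difference is that the paper's proof is a one-line citation of \cite[Thm.~16]{AFIT} for that invariance, whereas you supply a self-contained derivation. Your packaging of $(I,J,K)$ into the quadratic $F_\mathbf{P}(x)=\sum_\ell \frac{(x-p_\ell)(x-p_{\ell+1})}{p_\ell-p_{\ell+1}}$ and the resulting telescoping identity is a clean way to do this: with $A=u_\ell$, $B=u_{\ell+1}$, $C=v_\ell$, $D=v_{\ell+1}$, the $\alpha$-relation becomes $(B-A)(D-C)=\mu(C-A)(D-B)$, and then both sides of your claimed identity have common numerator $ABD-ABC-BCD+ACD$ over denominator $\mu(C-A)(D-B)$, so the telescoping checks out. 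The trade-off is length for independence: the paper's route is shorter but leans on an external result; yours is elementary and would make the present paper self-contained at this point, at the cost of essentially reproving a special case of the cited theorem. Your handling of the degenerate $\alpha=1$ via Zariski density (or the observation that then $\mathbf{P}$ and $\mathbf{Q}$ share alternating vertices, forcing termwise equality in $F$) is adequate.
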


\begin{proof}
    By \cite[Thm.~16]{AFIT}, the values of $I,J,K$ in \eqref{eq:hamiltonian} coincide for $\alpha$-related polygons. \qedhere
\end{proof}

\begin{rmk} \label{rm:alpha-related} Theorem~\ref{th:alpha-related} can also be seen in \cite{AFIT} from a complementary point of view. The barycenter of an ideal hyperbolic polygon is \emph{defined} in \cite[\S6.2.1]{AFIT} to be the limit as $\varepsilon\rightarrow 0$ of the fixed point of the composition $L_{1+\varepsilon}(\mathbf{P})$ of infinitesimal rotations along the sides of the polygon $\mathbf{P}$ -- let us temporarily denote this point $\tilde{S}_\mathbf{P}$. As explained in \cite[Rmk.~6.9]{AFIT}, the M\"obius invariance of $IK-J^2$ (cf.~\cite[Lem.~6.6]{AFIT}) implies $\tilde{S}_\mathbf{P}=\tilde{S}_\mathbf{Q}$ for $\alpha$-related $\mathbf{P}$ and $\mathbf{Q}$. Then in \cite[Lem.~6.10]{AFIT} it is shown that $\tilde{S}_\mathbf{P}$ minimizes 
$\mathcal{Q}_\mathbf{P}(X)$ given in \eqref{eq:xi-notation}, thus \emph{a posteriori} $\tilde{S}_\mathbf{P}=S_\mathbf{P}$ as in \eqref{eq:Sym}, by Theorem~\ref{th:Klei}.\end{rmk}

In the Euclidean setting, there is an analogous notion to the cross-ratio dynamics considered in \cite{AFIT} called \emph{bicycle correspondence}, which also has a fixed point called \emph{circumcenter of mass} (cf.~\cite[\S1, Thm.~5, Rmk.~3.3]{CCM1}). The next property of the hyperbolic barycenter $S_\mathbf{P}$ resembles the similar Archimedian property of the Euclidean circumcenter of mass \cite[Thm.~1]{CCM2}. 

\begin{lm}[Archimedean property] \label{lm:arch}
If the ideal polygon $\mathbf{P}$ is partitioned along any of its diagonals into ideal polygons $\mathbf{Q}_1$ and $\mathbf{Q}_2$ , then
\begin{equation}\label{eq:Archimed}
S_\mathbf{P}=\frac{I_{\mathbf{Q}_1}+K_{\mathbf{Q}_1}}{I_{\mathbf{P}}+K_{\mathbf{P}}}S_{\mathbf{Q}_1}+
\frac{I_{\mathbf{Q}_2}+K_{\mathbf{Q}_2}}{I_{\mathbf{P}}+K_{\mathbf{P}}}S_{\mathbf{Q}_2}.
\end{equation}
\end{lm}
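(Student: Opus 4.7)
The plan is to reduce the Archimedean property to the observation that each of the Hamiltonians $I$, $J$, $K$ is additive under partition along a diagonal, and then observe that the claimed identity follows from a direct algebraic manipulation of the explicit formula \eqref{eq:Sym}.

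First, I would set up notation. Say $\mathbf{P}=(p_1,\dots,p_n)$ is cut along the diagonal $p_ip_j$ (with $i<j$), so that $\mathbf{Q}_1=(p_i,p_{i+1},\dots,p_j)$ and $\mathbf{Q}_2=(p_j,p_{j+1},\dots,p_n,p_1,\dots,p_i)$. The edge set of $\mathbf{Q}_1$ consists of $\xi_i,\dots,\xi_{j-1}$ together with one new diagonal edge traversed from $p_j$ to $p_i$, while the edge set of $\mathbf{Q}_2$ consists of the remaining $\xi_\ell$ of $\mathbf{P}$ together with the same diagonal traversed in the opposite direction from $p_i$ to $p_j$.

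The key step is then to observe that in each of the sums defining $I_{\mathbf{Q}_1}+I_{\mathbf{Q}_2}$, $J_{\mathbf{Q}_1}+J_{\mathbf{Q}_2}$, and $K_{\mathbf{Q}_1}+K_{\mathbf{Q}_2}$, the summands coming from the diagonal contribute terms that differ only by swapping $p_i\leftrightarrow p_j$ and thus have opposite signs (since each summand in \eqref{eq:hamiltonian} is antisymmetric in the two endpoint labels). These contributions cancel exactly, leaving
\[I_{\mathbf{Q}_1}+I_{\mathbf{Q}_2}=I_{\mathbf{P}},\qquad J_{\mathbf{Q}_1}+J_{\mathbf{Q}_2}=J_{\mathbf{P}},\qquad K_{\mathbf{Q}_1}+K_{\mathbf{Q}_2}=K_{\mathbf{P}}.\]
In particular the coefficients $(I_{\mathbf{Q}_j}+K_{\mathbf{Q}_j})/(I_{\mathbf{P}}+K_{\mathbf{P}})$ in \eqref{eq:Archimed} sum to $1$.

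Finally, I would plug the formula \eqref{eq:Sym} for each $S_{\mathbf{Q}_j}$ into the right-hand side of \eqref{eq:Archimed}. Since the weights were designed precisely to cancel the denominators $I_{\mathbf{Q}_j}+K_{\mathbf{Q}_j}$, the $x$-coordinate collapses to $\bigl((I_{\mathbf{Q}_1}-K_{\mathbf{Q}_1})+(I_{\mathbf{Q}_2}-K_{\mathbf{Q}_2})\bigr)/(I_{\mathbf{P}}+K_{\mathbf{P}})=(I_\mathbf{P}-K_\mathbf{P})/(I_\mathbf{P}+K_\mathbf{P})$, and the $y$-coordinate similarly collapses to $2J_\mathbf{P}/(I_\mathbf{P}+K_\mathbf{P})$, which matches $S_\mathbf{P}$. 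Alternatively, one can phrase this last step as a direct invocation of Lemma~\ref{lm:interpolation} applied to the pair $\boldsymbol{\Xi}=\{\Xi_1,\Xi_2\}$ consisting of the two edge sets (extended formally to include the diagonal), once the additivity above is noted. I do not expect any serious obstacle: the only delicate point is keeping track of the orientation of the diagonal so that the cancellation argument is unambiguous.
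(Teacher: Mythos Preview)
Your proposal is correct and follows essentially the same approach as the paper: establish the additivity $I_{\mathbf{P}}=I_{\mathbf{Q}_1}+I_{\mathbf{Q}_2}$, $J_{\mathbf{P}}=J_{\mathbf{Q}_1}+J_{\mathbf{Q}_2}$, $K_{\mathbf{P}}=K_{\mathbf{Q}_1}+K_{\mathbf{Q}_2}$ via cancellation of the oppositely oriented diagonal contributions, and then read off \eqref{eq:Archimed} directly from \eqref{eq:Sym}. The paper's proof is slightly more terse but identical in substance; your alternative phrasing via Lemma~\ref{lm:interpolation} is unnecessary once the additivity is in hand.
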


\begin{proof}
Indeed, let $\mathbf{Q}_1=(p_1,\dots,p_t)$ and $\mathbf{Q}_2=(p_1,p_{t},\dots,p_n)$. One has $$I_{\mathbf{P}}=
\left(\sum\limits_{\ell=1}^{t-1} \dfrac{1}{p_{\ell}-p_{\ell+1}} +\dfrac{1}{p_t-p_1}\right)+\left(\dfrac{1}{p_1-p_t}+\sum\limits_{\ell=t}^{n} \dfrac{1}{p_{\ell}-p_{\ell+1}}\right)=I_{\mathbf{Q}_1}+I_{\mathbf{Q}_2}.$$
Similarly, $J_{\mathbf{P}}=J_{\mathbf{Q}_1}+J_{\mathbf{Q}_2}$ and $K_{\mathbf{P}}=K_{\mathbf{Q}_1}+K_{\mathbf{Q}_2}$, and \eqref{eq:Archimed} follows from \eqref{eq:Sym}.
\end{proof}

\begin{cor}\label{cor:archimedian}
    The points $S_\mathbf{P}$, $S_{\mathbf{Q}_1}$, and $S_{\mathbf{Q}_2}$ are collinear.
\end{cor}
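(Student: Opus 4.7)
The plan is to read the collinearity statement off directly from Lemma~\ref{lm:arch}. The expression \eqref{eq:Archimed} writes $S_{\mathbf{P}}$ as a linear combination of $S_{\mathbf{Q}_1}$ and $S_{\mathbf{Q}_2}$, and the key observation I would emphasize is that the two coefficients sum to $1$. Indeed, the proof of Lemma~\ref{lm:arch} establishes $I_{\mathbf{P}}=I_{\mathbf{Q}_1}+I_{\mathbf{Q}_2}$ and $K_{\mathbf{P}}=K_{\mathbf{Q}_1}+K_{\mathbf{Q}_2}$, so
\[
\frac{I_{\mathbf{Q}_1}+K_{\mathbf{Q}_1}}{I_{\mathbf{P}}+K_{\mathbf{P}}}+\frac{I_{\mathbf{Q}_2}+K_{\mathbf{Q}_2}}{I_{\mathbf{P}}+K_{\mathbf{P}}}=1.
\]
Therefore \eqref{eq:Archimed} exhibits $S_{\mathbf{P}}$ as an affine combination of $S_{\mathbf{Q}_1}$ and $S_{\mathbf{Q}_2}$, which is exactly the condition for three points to be collinear.

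The only subtlety is to ensure the statement is not vacuous, i.e., that the denominator $I_{\mathbf{P}}+K_{\mathbf{P}}$ is nonzero so that the affine combination makes sense; this was already invoked in Lemma~\ref{lm:arch} itself (and holds automatically for convex $\mathbf{P}$ by Remark~\ref{rm:positive}). So there is essentially no obstacle: the corollary is a one-line deduction from the Archimedean property, entirely analogous to how Corollary~\ref{rm:metathm} extracts collinearity from the Interpolation Lemma~\ref{lm:interpolation}. I expect the proof to be two sentences long.
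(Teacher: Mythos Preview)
Your proposal is correct and follows exactly the paper's approach: the paper's proof is the single sentence ``In \eqref{eq:Archimed}, $S_{\mathbf{P}}$ is a weighted sum of $S_{\mathbf{Q}_1}$ and $S_{\mathbf{Q}_2}$ with weights summing to $1$.'' Your additional remarks about the nonvanishing denominator and the analogy with Corollary~\ref{rm:metathm} are accurate but not needed for the argument itself.
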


\begin{proof}
    In \eqref{eq:Archimed}, $S_{\mathbf{P}}$ is a weighted sum of $S_{\mathbf{Q}_1}$ and $S_{\mathbf{Q}_2}$ with weights summing to $1$.
\end{proof}

\begin{rmk}
Let us briefly compare the hyperbolic barycenter $S_\mathbf{P}$ with other centers defined by integrable polygonal dynamics. As already mentioned, the cross-ratio dynamics of \cite{AFIT} form a kind of ideal hyperbolic analogue of the bicycle correspondence of \cite{CCM1}, making $S_\mathbf{P}$ analogous to the circumcenter of mass. Yet another infamous polygonal center is the fixed point of the pentagram map, which was discovered in \cite{Schwartz} and coordinatized in \cite{Glick}. In \cite[\S1]{Izosimov} it is suggested that this point may be the limit point as $\varepsilon\rightarrow 0$ of an eigenvector for the infinitesimal monodromy of a deformation into twisted polygons $\mathbf{P}_{1+\varepsilon}$ of a closed polygon $\mathbf{P}_1$  (cf.~Remark~\ref{rm:alpha-related}). And yet, it remains unknown whether the circumcenter of mass or the fixed point of the pentagram map is defined by an optimality property analogous to that of the hyperbolic barycenter in Theorem~\ref{th:Klei}. We refer to \cite{KAISER} for a systematic discussion of polygonal centers defined by optimality conditions.
\end{rmk}

\section{Case studies: small-gons}\label{sec:concurrence}

The Interpolation Lemma and the Archimedian Property (and their corollaries) established in \S\ref{sec:polygons} 
are especially useful for geometrically constructing the hyperbolic barycenters of ideal hyperbolic $n$-gons recursively
. As we have already extensively addressed the particular case of triangles, let us now illustrate these constructions in the next few particular cases of $n=4,5,6,10$, where we begin to glimpse a cascade of concurrences culminating at the hyperbolic barycenter.

\subsection{Case study: Quadrilaterals}\label{sec:quads}

A first construction of the hyperbolic barycenter of an ideal quadrilateral $\mathbf{P}$ (and in general, of an ideal polygon) is achieved via the Archimedian Property recursively from the case of triangles (and in general, from the case of smaller-gons). Let $S_\ell$ be the symmedian point of the triangle $\Delta (P_{\ell-1}P_\ell P_{\ell+1})$. Then by Corollary~\ref{cor:archimedian} $S_\mathbf{P}$ lies on $S_1S_3$ and on $S_2S_4$ (see Figure \ref{fig:quad}). 

Surprisingly, an alternative construction makes it easier to construct hyperbolic barycenters of ideal quadrilaterals than those of ideal triangles, as we see in the next result.

\begin{thm}\label{th:quad-barycenter}
    The hyperbolic barycenter of an ideal quadrilateral $\mathbf{P}$ lies on its diagonals. 
\end{thm}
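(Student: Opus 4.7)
The plan is to invoke the Interpolation Lemma (Lemma~\ref{lm:interpolation}) with an opposite-vertex partition of the sides of $\mathbf{P}=(p_1,p_2,p_3,p_4)$. Group the four sides into $\Xi_1=\{\xi_4,\xi_1\}$, the pair meeting at $P_1$, and $\Xi_2=\{\xi_2,\xi_3\}$, the pair meeting at $P_3$. Since $\Xi_1\cup\Xi_2$ exhausts the sides of $\mathbf{P}$ without overlap, one has $\mathcal{Q}_{\boldsymbol{\Xi}}=\mathcal{Q}_\mathbf{P}$. The uniqueness of the critical point established in the proof of Theorem~\ref{th:Klei} then forces $S_{\boldsymbol{\Xi}}=S_\mathbf{P}$.

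By Remark~\ref{rm:twosidesminimizer}, the ``barycenter'' of two adjacent sides is their shared ideal vertex, so $S_{\Xi_1}=P_1$ and $S_{\Xi_2}=P_3$. Corollary~\ref{rm:metathm} then tells us that $S_\mathbf{P}$, being an affine combination of $S_{\Xi_1}$ and $S_{\Xi_2}$ with weights summing to $1$, lies on the line through $P_1$ and $P_3$, i.e., on the diagonal $P_1P_3$. Repeating the argument with the complementary partition $\Xi_1'=\{\xi_1,\xi_2\}$ (meeting at $P_2$) and $\Xi_2'=\{\xi_3,\xi_4\}$ (meeting at $P_4$) places $S_\mathbf{P}$ on the other diagonal $P_2P_4$, completing the proof.

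The only subtlety I anticipate is verifying nondegeneracy. We need each $I_{\Xi_j}+K_{\Xi_j}\neq 0$, so that the identification $S_{\Xi_j}=P_{2j-1}$ via \eqref{eq:xi-notation} is legitimate, and we need $I_\mathbf{P}+K_\mathbf{P}=\sum_j(I_{\Xi_j}+K_{\Xi_j})\neq 0$ as required by the hypothesis of Lemma~\ref{lm:interpolation}. The latter is guaranteed for convex $\mathbf{P}$ by Remark~\ref{rm:positive}. For the former, a short calculation from \eqref{eq:hamiltonian} yields
\[I_{\Xi_1}+K_{\Xi_1}=\frac{(p_2-p_4)(1+p_1^2)}{(p_4-p_1)(p_1-p_2)},\]
which is nonzero whenever $p_2\neq p_4$, i.e., whenever the other diagonal $P_2P_4$ is well-defined; symmetrically for $\Xi_2$, and analogously for the complementary partition. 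Thus both partitions are jointly admissible for any nondegenerate convex ideal quadrilateral, and $S_\mathbf{P}$ lies on both diagonals.
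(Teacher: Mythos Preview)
Your proof is correct and follows essentially the same route as the paper's: partition the four sides into two pairs of adjacent sides meeting at opposite vertices, identify each $S_{\Xi_j}$ with the shared ideal vertex via Remark~\ref{rm:twosidesminimizer}, and apply Corollary~\ref{rm:metathm} to place $S_\mathbf{P}$ on each diagonal. Your explicit verification that $I_{\Xi_j}+K_{\Xi_j}\neq 0$ is more careful than the paper's own argument (which omits this check); note only a harmless sign slip in your displayed formula, where the numerator should be $(p_4-p_2)(1+p_1^2)$.
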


\begin{proof} Let $\Xi_\ell=\{\xi_{\ell-1},\xi_\ell\}$ be the set of sides meeting at the ideal vertex $P_\ell$, where as usual we take indices modulo $n=4$. With notation as in \eqref{eq:xi-notation}, we see that each $S_{\Xi_\ell}=P_\ell$ (cf.~Remark~\ref{rm:twosidesminimizer}), and \[\mathcal{Q}_{\Xi_1}(X)+\mathcal{Q}_{\Xi_3}(X)=\mathcal{Q}_\mathbf{P}(X)=\mathcal{Q}_{\Xi_2}(X)+\mathcal{Q}_{\Xi_4}(X).\] By Corollary~\ref{rm:metathm}, $S_{\{\Xi_1,\Xi_3\}}=S_\mathbf{P}=S_{\{\Xi_2,\Xi_4\}}$ lies on $P_1P_3$ and on $P_2P_4$.\qedhere
\end{proof}

\begin{figure}[hbt!]
    \centering
        \includegraphics[width=0.6\textwidth]{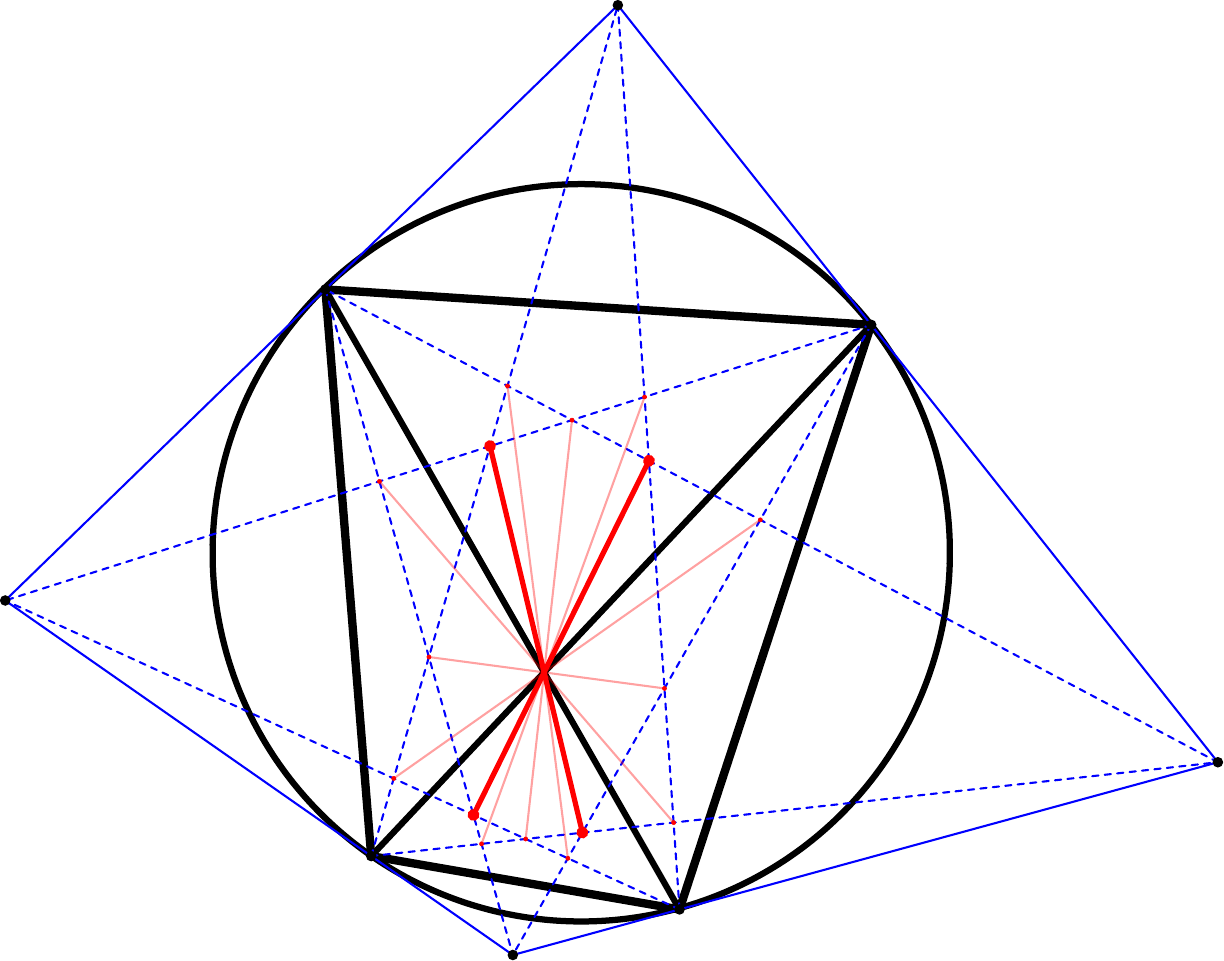}
    \caption{The hyperbolic barycenter of an ideal quadrilateral is the intersection point of its diagonals as well as the intersection of the segments connecting symmedians of opposite triangles in each triangulation. }
    \label{fig:quad}
\end{figure}

\begin{cor}\label{cor:big-diagonals} The hyperbolic barycenter of an ideal quadrilateral $\mathbf{P}$ is the intersection of the diagonals $P_1^*P_3^*$ and $P_2^*P_4^*$, of the quadrilateral formed by the polars to the sides.
\end{cor}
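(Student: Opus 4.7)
The plan is to combine Theorem~\ref{th:quad-barycenter} with a classical fact from projective geometry: for any cyclic quadrilateral, the diagonal triangle formed by the three pairs of opposite sides of the associated complete quadrangle is self-polar with respect to the circumscribed conic. Set $E:=\xi_1\cap\xi_3$ and $F:=\xi_2\cap\xi_4$ (in $\mathbb{RP}^2$, to allow for points at infinity when a pair of opposite sides of the deKleinization happens to be parallel), and $G:=P_1P_3\cap P_2P_4$. The classical theorem asserts that the diagonal triangle $\triangle EFG$ is self-polar with respect to the absolute $\mathbf{O}$, i.e., each side of this triangle is the polar of the opposite vertex.

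By Theorem~\ref{th:quad-barycenter} the hyperbolic barycenter satisfies $S_\mathbf{P}=G$, so it suffices to show that $G$ lies on both diagonals of the polar quadrilateral. Since $P_\ell^*$ is by definition the pole of $\xi_\ell$ with respect to $\mathbf{O}$, La Hire's theorem identifies the line $P_1^*P_3^*$ with the polar of $E=\xi_1\cap\xi_3$; by the self-polarity of $\triangle EFG$, this polar is precisely the side $FG$, which passes through $G$. The analogous argument identifies $P_2^*P_4^*$ with the side $EG$, which also passes through $G$. These two distinct lines therefore meet precisely at $G=S_\mathbf{P}$.

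The main point is to cite the correct classical polarity statement; there is essentially no computation to perform beyond that. The only care required is to verify non-degeneracy---that $\triangle EFG$ is an honest (possibly infinite) triangle so that $FG$ and $EG$ are distinct lines---but this is automatic for any convex ideal quadrilateral, and more generally whenever no three of the $P_\ell$ coincide. One could, alternatively, attempt a purely internal proof using the Interpolation Lemma with the partition $\boldsymbol{\Xi}=\{\{\xi_1,\xi_3\},\{\xi_2,\xi_4\}\}$ (noting that each $S_{\{\xi_a,\xi_b\}}$ lies on $P_a^*P_b^*$ by Lemma~\ref{lm:interpolation} applied to the singleton partition), but this only shows that $S_\mathbf{P}$ lies on some line joining a point of $P_1^*P_3^*$ to a point of $P_2^*P_4^*$, so the projective approach above is more direct.
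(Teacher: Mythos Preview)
Your proof is correct and follows essentially the same approach as the paper: both combine Theorem~\ref{th:quad-barycenter} with a classical configuration fact about inscribed quadrilaterals and their polars. The paper cites this fact in the form ``the diagonals of a circumscribed quadrilateral meet at the same point as the diagonals of the contact quadrilateral,'' whereas you derive the needed statement from the self-polarity of the diagonal triangle of a complete quadrangle inscribed in a conic---these are two standard packagings of the same projective content, and your version has the mild advantage of making the polarity explicit rather than leaving it inside a citation.
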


\begin{proof}
    This is an immediate consequence of Theorem~\ref{th:quad-barycenter} and the well-known (Euclidean!) fact \cite[11.1.7]{aakop} that the diagonals of a circumscribed quadrilateral intersect at the same point as the diagonals of the inscribed quadrilateral whose vertices are the points of tangency.
\end{proof}

The trivial observation in Corollary~\ref{cor:big-diagonals} has the non-trivial consequence in Lemma~\ref{lm:two-askew-sides} below that the hyperbolic barycenter of an ideal quadrilateral simultaneously and independently minimizes the sum of hyperbolic sines of hyperbolic distances to each pair of opposite sides.

\begin{lm} \label{lm:two-askew-sides}
    For $\mathbf{P}$ a convex ideal hyperbolic quadrilateral with sides $\xi_\ell=P_\ell P_{\ell+1}$, let $\,\Xi_o=\{\xi_1,\xi_3\}$ and $\Xi_e=\{\xi_2,\xi_4\}$. Then $S_{\Xi_o}=S_\mathbf{P}=S_{\Xi_e}$.
\end{lm}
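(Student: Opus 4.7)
The plan is to prove the lemma by a short direct computation establishing that the two vectors $(I_{\Xi_o},J_{\Xi_o},K_{\Xi_o})$ and $(I_{\Xi_e},J_{\Xi_e},K_{\Xi_e})$ in $\mathbb{R}^3$ are proportional. The formula for $S_\Xi$ in \eqref{eq:xi-notation} is visibly invariant under rescaling $(I_\Xi,J_\Xi,K_\Xi)\mapsto \lambda(I_\Xi,J_\Xi,K_\Xi)$ with $\lambda\neq 0$, so this proportionality alone immediately yields $S_{\Xi_o}=S_{\Xi_e}$. Since $(I_\mathbf{P},J_\mathbf{P},K_\mathbf{P})$ is the coordinate-wise sum of these two proportional vectors, it lies in the same line through the origin in $\mathbb{R}^3$, and therefore $S_\mathbf{P}=S_{\Xi_o}$ as well.

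The computation itself is routine. Setting $D_o:=(p_1-p_2)(p_3-p_4)$ and $D_e:=(p_2-p_3)(p_4-p_1)$ as the common denominators appearing when we clear fractions in the formulas \eqref{eq:hamiltonian} defining $(I_{\Xi_o},J_{\Xi_o},K_{\Xi_o})$ and $(I_{\Xi_e},J_{\Xi_e},K_{\Xi_e})$ respectively, direct expansion should yield
\[
D_o\cdot\bigl(I_{\Xi_o},\ 2J_{\Xi_o},\ K_{\Xi_o}\bigr)=\Bigl((p_1+p_3)-(p_2+p_4),\ \ 2(p_1p_3-p_2p_4),\ \ p_1p_2(p_3-p_4)+p_3p_4(p_1-p_2)\Bigr),
\]
and the symmetric expansion shows $D_e\cdot(I_{\Xi_e},2J_{\Xi_e},K_{\Xi_e})$ is exactly the negative of the right-hand side. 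This gives the desired proportionality with ratio $-D_o/D_e$, from which the conclusion $S_{\Xi_o}=S_\mathbf{P}=S_{\Xi_e}$ follows formally as described above.

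The main obstacle is not the calculation itself but the bookkeeping of non-degeneracy assumptions to ensure the various denominators in the argument are nonzero. One needs: (i)~$D_o,D_e\neq 0$, which is immediate from the four parameters $p_\ell$ being distinct; (ii)~$1-D_o/D_e=(p_3-p_1)(p_2-p_4)/D_e\neq 0$, again from distinctness, so that $(I_\mathbf{P},J_\mathbf{P},K_\mathbf{P})$ is a nonzero scalar multiple of $(I_{\Xi_o},J_{\Xi_o},K_{\Xi_o})$; and (iii)~$I_{\Xi_o}+K_{\Xi_o}\neq 0$ so that $S_{\Xi_o}$ is well defined. For (iii) I would invoke the cotangent identity from Remark~\ref{rm:positive} to write $I_{\Xi_o}+K_{\Xi_o}=\cot(\alpha_1/2)+\cot(\alpha_3/2)=\sin\bigl((\alpha_1+\alpha_3)/2\bigr)/\bigl(\sin(\alpha_1/2)\sin(\alpha_3/2)\bigr)$, which is nonzero because the positive arcs $\alpha_\ell$ of a convex inscribed quadrilateral sum to $2\pi$, forcing $\alpha_1+\alpha_3\in(0,2\pi)$ and hence $\alpha_1+\alpha_3\notin 2\pi\mathbb{Z}$.
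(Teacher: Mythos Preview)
Your proof is correct and takes a genuinely different route from the paper's. You argue purely algebraically: clearing denominators shows $D_e\cdot(I_{\Xi_e},2J_{\Xi_e},K_{\Xi_e})=-D_o\cdot(I_{\Xi_o},2J_{\Xi_o},K_{\Xi_o})$, so the two $(I,J,K)$-triples (and their sum, which is the triple for $\mathbf{P}$) are proportional, whence the three $S$-points coincide by the homogeneity of \eqref{eq:xi-notation}. Your non-degeneracy checks (i)--(iii) are exactly the ones needed, and your use of the cotangent identity from Remark~\ref{rm:positive} to verify $I_{\Xi_o}+K_{\Xi_o}\neq 0$ is clean.

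The paper instead argues geometrically: via the Interpolation Lemma and Corollary~\ref{rm:metathm} it places $S_{\Xi_o}$ on the polar diagonal $P_1^*P_3^*$, places $S_{\Xi_e}$ on $P_2^*P_4^*$, and puts $S_\mathbf{P}$ on the segment $S_{\Xi_o}S_{\Xi_e}$ as a genuine convex combination; then it invokes Corollary~\ref{cor:big-diagonals} (itself resting on Theorem~\ref{th:quad-barycenter} and a classical Euclidean fact about circumscribed quadrilaterals) to identify $S_\mathbf{P}$ as the intersection of the two polar diagonals, which forces the collapse. Your approach is more elementary and self-contained---it bypasses Corollary~\ref{cor:big-diagonals} entirely and exposes the explicit proportionality constant $-D_o/D_e$ (essentially a cross-ratio), which is extra information the synthetic proof does not surface. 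The paper's approach, in return, better showcases the Interpolation/collinearity machinery of \S\ref{sec:polygons} and the interplay with the polar-diagonal picture.
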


\begin{proof}
    Writing $\boldsymbol{\Xi}_{\text{odd}}=\{\{\xi_1\},\{\xi_3\}\}$ and $\boldsymbol{\Xi}_\text{even}=\{\{\xi_2\},\{\xi_4\}\}$, we conclude from Lemma~\ref{lm:interpolation} that $S_{\boldsymbol{\Xi}_\text{odd}}=S_{\Xi_o}$ and $S_{\boldsymbol{\Xi}_\text{even}}=S_{\Xi_e}$, whence by Corollary~\ref{rm:metathm} $S_{\Xi_o}$ lies on $P_1^*P_3^*$ and $S_{\Xi_e}$ lies on $P_2^*P_4^*$ (cf.~Remark~\ref{rm:twosidesminimizer}). Letting $\boldsymbol{\Xi}_\text{tot}=\{\Xi_o,\Xi_e\}$, then $S_{\boldsymbol{\Xi}_\text{tot}}=S_\mathbf{P}$ again by Lemma~\ref{lm:interpolation}, whence again by Corollary~\ref{rm:metathm} $S_\mathbf{P}$ lies on the geodesic segment $S_{\Xi_o}S_{\Xi_e}$. Since $S_\mathbf{P}$ is the intersection of $P_1^*P_3^*$ and $P_2^*P_4^*$ by Corollary~\ref{cor:big-diagonals}, this forces $S_{\Xi_o}$ to also lie on $P_2^*P_4^*$ and $S_{\Xi_e}$ to also lie on $P_1^*P_3^*$.
\end{proof}

    Besides the two constructions of the hyperbolic barycenter $S_\mathbf{P}$ of an ideal quadrilateral $\mathbf{P}$, we witness in Figure \ref{fig:quad} many additional concurrences at $S_\mathbf{P}$, which we leave unaddressed as exercises for the interested reader.

\subsection{Case study: Pentagons}

Our first construction of the hyperbolic barycenter of an ideal pentagon $\mathbf{P}$ is obtained from the Archimedian Property. Let $S_\ell$ be the symmedian point of the triangle $\Delta (P_{\ell-1}P_\ell P_{\ell+1})$, and let $T_\ell$ be the intersection of the two diagonals not containing $P_\ell$: $P_{\ell-1}P_{\ell+2}$ and $P_{\ell+1}P_{\ell-2}$. Then by Theorem~\ref{th:quad-barycenter}, $T_\ell$ is the hyperbolic barycenter of the quadrilateral $P_{\ell-2}P_{\ell-1}P_{\ell+1}P_{\ell+2}$. Thus by Corollary~\ref{cor:archimedian}, the five geodesic segments $S_\ell T_\ell$ are concurrent at $S_\mathbf{P}$.

On the other hand, we can use the Interpolation Lemma to provide an alternative construction of the hyperbolic barycenter $S_\mathbf{P}$.

 \begin{lm} \label{lm:pentagon2}
    Let $\mathbf{P}$ be a convex ideal hyperbolic pentagon. Let $R_{\ell}$ be the intersection of the two hyperbolic altitudes, from the vertex $P_{\ell+2}$ to the side $P_{\ell-2}P_{\ell-1}$, and from the vertex $P_{\ell-2}$ to the side $P_{\ell+1}P_{\ell+2}$, i.e., $R_\ell$ lies on $P_{\ell+2}P_{\ell+3}^*$ and on $P_{\ell+3}P_{\ell+1}^*$. Then the hyperbolic barycenter $S_{\mathbf{P}}$ lies on the geodesic segment $P_\ell R_\ell$.
 \end{lm}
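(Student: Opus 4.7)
The plan is to recognize $R_\ell$ as the minimizer $S_\Xi$ associated with the three consecutive sides of $\mathbf{P}$ that do not touch $P_\ell$, and then to apply the Interpolation Lemma to the natural two-block partition of the five sides into this triple and the pair of sides that do meet at $P_\ell$.

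First, set $\Xi_2:=\{\xi_{\ell+1},\xi_{\ell+2},\xi_{\ell+3}\}$ and $\Xi_1:=\{\xi_{\ell-1},\xi_\ell\}$, so that $\Xi_1$ and $\Xi_2$ partition the sides of $\mathbf{P}$. By Lemma~\ref{lm:three-sides-minimizer}, applied with center index $\ell+2$ in place of $\ell$, $S_{\Xi_2}$ is the intersection of the altitude from $P_{\ell+2}$ to $\xi_{\ell+3}$ (namely $P_{\ell+2}P_{\ell+3}^*$) with the altitude from $P_{\ell+3}$ to $\xi_{\ell+1}$ (namely $P_{\ell+3}P_{\ell+1}^*$); this is precisely the definition of $R_\ell$, so $R_\ell=S_{\Xi_2}$. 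On the other hand, the two sides in $\Xi_1$ meet at the ideal vertex $P_\ell$, so by Remark~\ref{rm:twosidesminimizer} we have $S_{\Xi_1}=P_\ell$.

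Since the two blocks exhaust the sides, $\mathcal{Q}_{\boldsymbol{\Xi}}=\mathcal{Q}_\mathbf{P}$ for $\boldsymbol{\Xi}:=\{\Xi_1,\Xi_2\}$, and a direct expansion of \eqref{eq:sxialphabeta} shows $S_{\boldsymbol{\Xi}}=S_\mathbf{P}$: the cancellations $(I_{\Xi_j}+K_{\Xi_j})\cdot\frac{I_{\Xi_j}-K_{\Xi_j}}{I_{\Xi_j}+K_{\Xi_j}}=I_{\Xi_j}-K_{\Xi_j}$ (and its analogue in the second coordinate) collapse the weighted sum to $\bigl(\tfrac{I_\mathbf{P}-K_\mathbf{P}}{I_\mathbf{P}+K_\mathbf{P}},\tfrac{2J_\mathbf{P}}{I_\mathbf{P}+K_\mathbf{P}}\bigr)=S_\mathbf{P}$. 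Invoking Corollary~\ref{rm:metathm} then places $S_\mathbf{P}$ on the geodesic segment $S_{\Xi_1}S_{\Xi_2}=P_\ell R_\ell$, as claimed. The only mildly delicate point is asserting the segment (not just the line) conclusion, which requires both weights $I_{\Xi_j}+K_{\Xi_j}$ to have the same sign; this follows from Remark~\ref{rm:positive} since $\mathbf{P}$ is convex and each $|\Xi_j|\geq 2$, so no new computation is needed beyond the tools already assembled in \S\ref{sec:polygons}.
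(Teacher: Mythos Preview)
Your proof is correct and follows essentially the same approach as the paper: the same partition $\boldsymbol{\Xi}=\{\Xi_1,\Xi_2\}$ with $\Xi_1=\{\xi_{\ell-1},\xi_\ell\}$ and $\Xi_2=\{\xi_{\ell+1},\xi_{\ell+2},\xi_{\ell+3}\}$, the same identifications $S_{\Xi_1}=P_\ell$ and $S_{\Xi_2}=R_\ell$ via Remark~\ref{rm:twosidesminimizer} and Lemma~\ref{lm:three-sides-minimizer}, and the same appeal to Corollary~\ref{rm:metathm}. Your version is in fact slightly more thorough, since you spell out why $S_{\boldsymbol{\Xi}}=S_\mathbf{P}$ and justify the segment (not just line) conclusion via convexity and Remark~\ref{rm:positive}.
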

 
\begin{proof}
    By Corollary~\ref{rm:metathm} applied to $\boldsymbol{\Xi}=\{\Xi_{1},\Xi_2\}$, with $\Xi_1=\{\xi_{\ell-1},\xi_\ell\}$ and $\Xi_2=\{\xi_{\ell+1},\xi_{\ell+2},\xi_{\ell+3}
    \}$, $S_{\boldsymbol{\Xi}}=S_{\mathbf{P}}$ lies on $S_{\Xi_1}S_{\Xi_2}$. Now $S_{\Xi_1}=P_\ell$ (cf.~Remark~\ref{rm:twosidesminimizer}), and $S_{\Xi_2}=R_\ell$ by Lemma~\ref{lm:three-sides-minimizer}. 
\end{proof}

We witness in the case of ideal pentagons also, as in the case of ideal quadrilaterals, the hyperbolic barycenter as the epicenter of several kinds of concurrences.

\begin{figure}[!hbt]
    \centering
    \includegraphics[width=0.4 \textwidth]{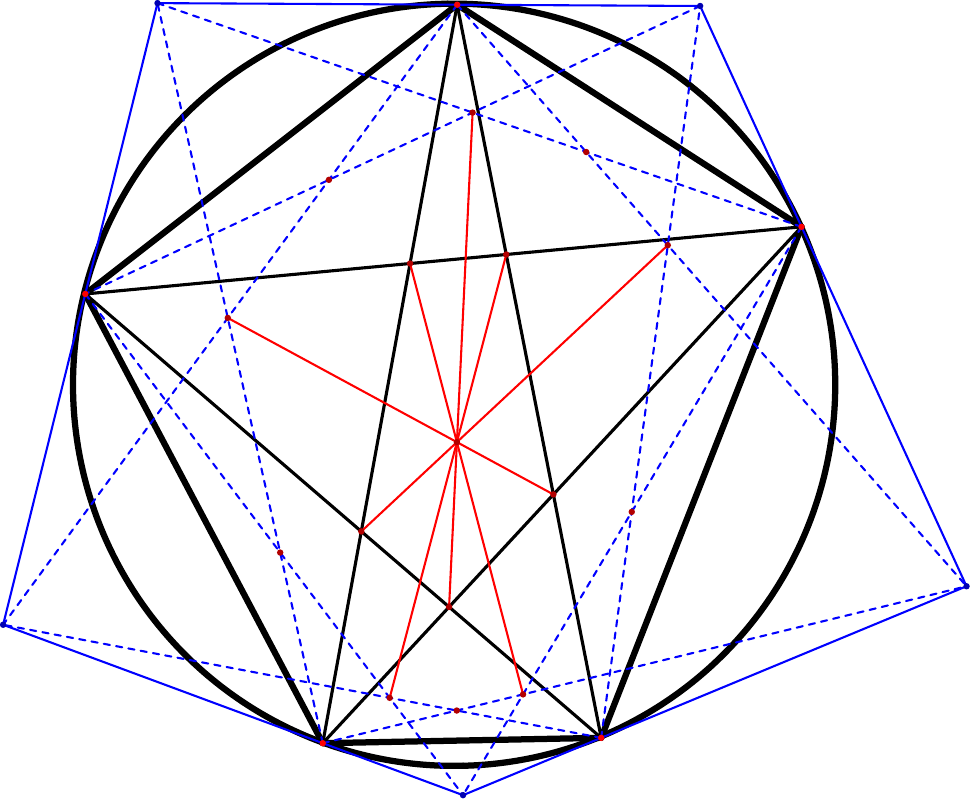} \hspace{.4in}\includegraphics[width=0.4 \textwidth]{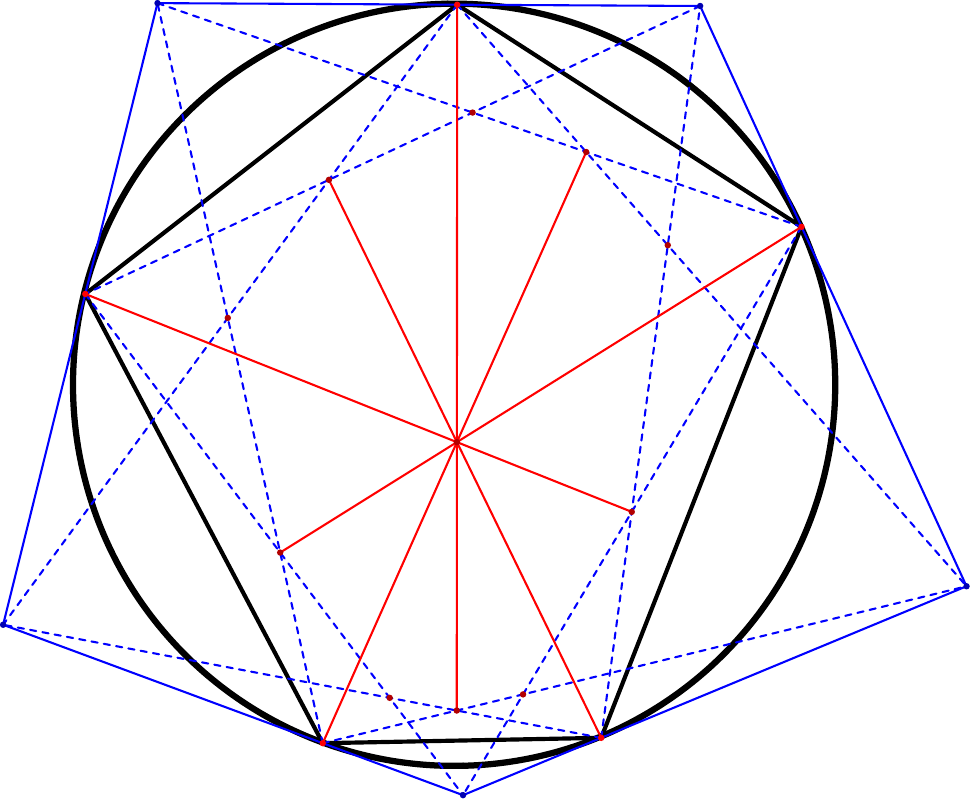}
    \caption{Constructions of the hyperbolic barycenter of an ideal pentagon. Left: via the Archimedian property. Right: via the Interpolation Lemma.}
    \label{fig:SymNgon}
\end{figure}

\subsection{Case study: Hexagons} Surprisingly, it is relatively easier to construct the hyperbolic barycenter of an ideal hexagon than that of an ideal pentagon. In fact, the construction of the former arises from one of the variants of the Brianchon Theorem (see \cite[11.1.9]{aakop} and Figure~\ref{fig:hex}).

\begin{thm}\label{th:hex}
    Let $\mathbf{Q}$ be the hexagon in $\mathbb{H}$ whose sides lie on the the short diagonals of the ideal hexagon $\mathbf{P}$. Then the long diagonals of $\,\mathbf{Q}$ all intersect at $S_\mathbf{P}$.
\end{thm}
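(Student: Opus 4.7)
The plan is to bypass the Brianchon-style viewpoint alluded to in the statement and instead give a direct argument via the Archimedean Property (Lemma~\ref{lm:arch}) and the quadrilateral barycenter identification (Theorem~\ref{th:quad-barycenter}). For each $i\in\{1,2,3\}$, the long diagonal $P_iP_{i+3}$ of $\mathbf{P}$ (indices mod $6$) splits $\mathbf{P}$ into two ideal quadrilaterals $\mathbf{Q}_1=(P_i,P_{i+1},P_{i+2},P_{i+3})$ and $\mathbf{Q}_2=(P_{i+3},P_{i+4},P_{i+5},P_i)$. By Corollary~\ref{cor:archimedian}, the three points $S_\mathbf{P}$, $S_{\mathbf{Q}_1}$, $S_{\mathbf{Q}_2}$ are collinear.

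The remaining step is to recognize $S_{\mathbf{Q}_1}$ and $S_{\mathbf{Q}_2}$ as a pair of opposite vertices of $\mathbf{Q}$. By Theorem~\ref{th:quad-barycenter}, $S_{\mathbf{Q}_1}$ is the intersection of the diagonals of $\mathbf{Q}_1$, namely $P_iP_{i+2}$ and $P_{i+1}P_{i+3}$; these are two consecutive short diagonals of $\mathbf{P}$, so their intersection is a vertex of the inner hexagon $\mathbf{Q}$. Similarly $S_{\mathbf{Q}_2}$ is the intersection of the short diagonals $P_{i+3}P_{i+5}$ and $P_{i+4}P_i$, i.e., the vertex of $\mathbf{Q}$ cyclically opposite to $S_{\mathbf{Q}_1}$ in the hexagram configuration. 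Thus the collinearity above says exactly that $S_\mathbf{P}$ lies on the long diagonal of $\mathbf{Q}$ joining these two opposite vertices, and letting $i$ run over $\{1,2,3\}$ sweeps out all three long diagonals of $\mathbf{Q}$.

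The main obstacle is essentially only bookkeeping: one must verify that as $i$ varies, the resulting pairs of quadrilateral barycenters are precisely the three pairs of opposite vertices of $\mathbf{Q}$. This is handled by a clean reindexing. Setting $V_j$ to be the intersection of $P_jP_{j+2}$ with $P_{j+1}P_{j+3}$, the identifications above read $S_{\mathbf{Q}_1}=V_i$ and $S_{\mathbf{Q}_2}=V_{i+3}$, so the three long diagonals of $\mathbf{Q}$ are $V_1V_4$, $V_2V_5$, $V_3V_6$, each of which contains $S_\mathbf{P}$. No hyperbolic computation beyond what is already packaged in Theorem~\ref{th:quad-barycenter} and Lemma~\ref{lm:arch} is required, so the proof should be short once the indexing is made explicit.
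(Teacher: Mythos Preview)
Your argument is correct and is essentially identical to the paper's own proof: both split $\mathbf{P}$ along a long diagonal into two ideal quadrilaterals, invoke Theorem~\ref{th:quad-barycenter} to identify their hyperbolic barycenters with the intersections of consecutive short diagonals (i.e., opposite vertices of $\mathbf{Q}$), and then apply the Archimedean collinearity (Corollary~\ref{cor:archimedian}) to place $S_\mathbf{P}$ on each long diagonal of $\mathbf{Q}$. Your version is slightly more explicit about the indexing via the $V_j$ notation, but the content matches exactly.
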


\begin{proof}
For each $\ell$, consider the ideal quadrilateral $(p_{\ell-1},p_\ell,p_{\ell+1},p_{\ell+2})$, and let $Q_\ell$ be its hyperbolic barycenter. By Theorem~\ref{th:quad-barycenter}, $Q_\ell$ is the intersection point of $P_\ell P_{\ell+2}$ and $P_{\ell-1}P_{\ell+1}$, and by Corollary~\ref{lm:arch}, $S_\mathbf{P}$ lies on the geodesic segment $Q_\ell Q_{\ell+3}$.\qedhere
\end{proof}

Another construction of the hyperbolic barycenter of an ideal hexagon is obtained from the Interpolation Lemma, as follows. Let $R_\ell$ be the intersection of $P_\ell P_{\ell+1}^*$ and $P_{\ell+1}P_{\ell-1}^*$. Then $R_\ell=S_{\Xi_\ell}$ for $\Xi_\ell:=\{\xi_{\ell-1},\xi_\ell,\xi_{\ell+1}\}$ by Lemma~\ref{lm:three-sides-minimizer}. Applying Corollary~\ref{rm:metathm} to $\boldsymbol{\Xi}=\{\Xi_\ell,\Xi_{\ell+3}\},$ we find that $S_{\mathbf{P}}$ lies on each geodesic segment $R_\ell R_{\ell+3}$.

\begin{figure}[!hbt]
    \centering
    \includegraphics[width=0.4 \textwidth]{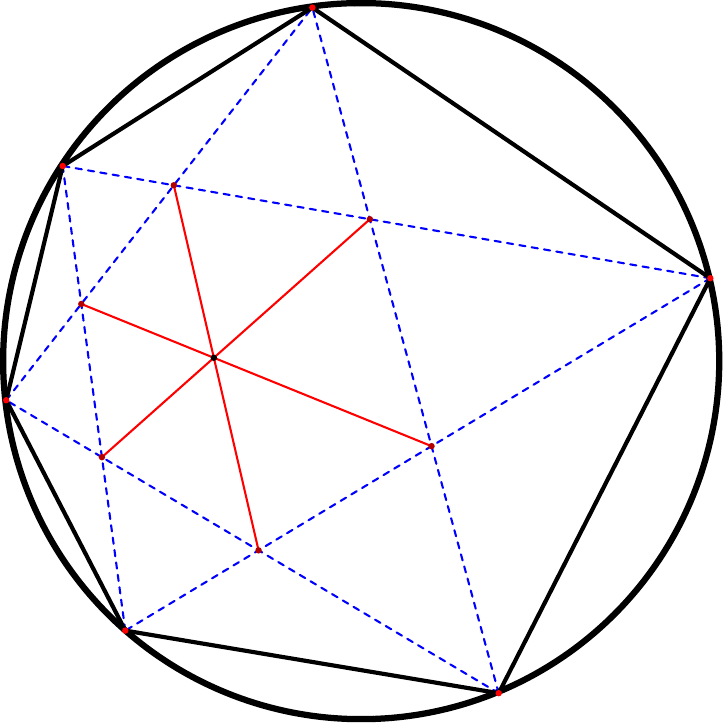}\hspace{.4in}
    \includegraphics[width=0.4 \textwidth]{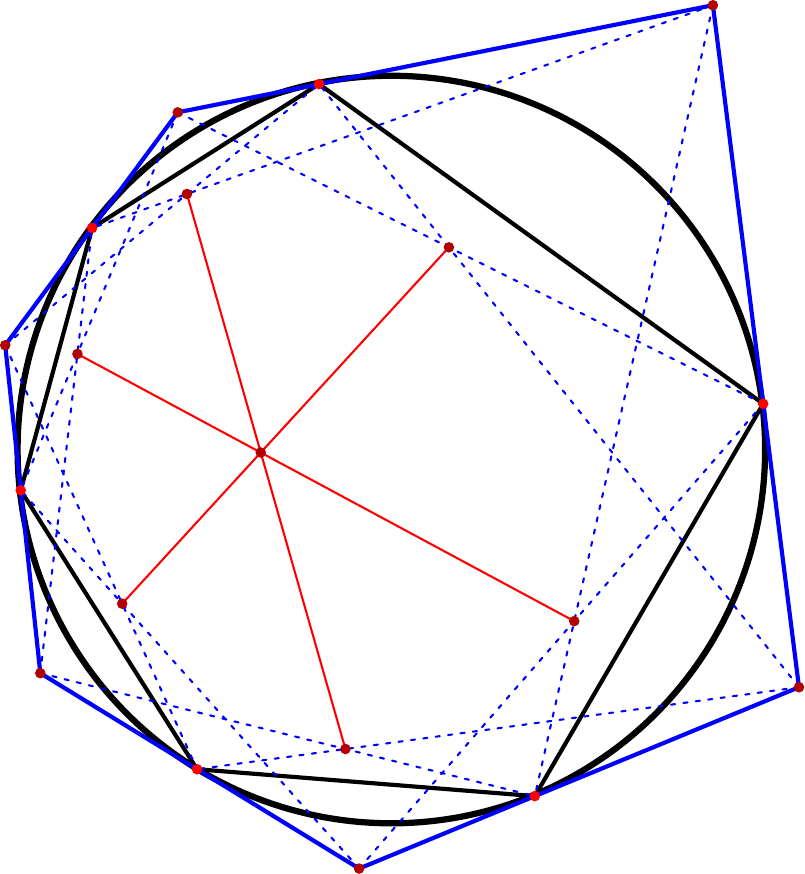}
    \caption{Constructions of the hyperbolic barycenter of an ideal hexagon. Left: via the Archimedian property. Right: via the Interpolation Lemma.}
    \label{fig:hex}
\end{figure}

\subsection{Case study: $(2^m+2)$-gons}

The (initially surprising) relative simplicity in the construction of hyperbolic barycenters for ideal hexagons versus ideal pentagons is part of a general phenomenon, which leads to a layered family of interesting concurrences arising from the Archimedian Property. We can subdivide an ideal $(2^m+2)$-gon $\mathbf{P}$ along any of its $2^{m-1}+1$ long diagonals into two $(2^{m-1}+2)$-gons \mbox{$\mathbf{Q}_1$ and $\mathbf{Q}_2$,} and conclude by Corollary~\ref{cor:archimedian} that $S_{\mathbf{P}}$ lies on $S_{\mathbf{Q}_1}S_{\mathbf{Q}_2}$.  This gives a concurrence of $2^{m-1}+1$ lines, each of whose endpoints is itself a concurrence of $2^{m-2}+1$ lines, each of whose endpoints is itself a concurrence \ldots, etc. This recursively reduces the construction of $S_\mathbf{P}$ to any one of the many equivalent constructions of the hyperbolic barycenter of an ideal quadrilateral in \S\ref{sec:quads} (see Figure~\ref{fig:quad}).
In Figure~\ref{fig:decagon} we depict the next case $m=3$ of this construction, to obtain the hyperbolic barycenter of an ideal decagon as the point of concurrence of the five line segments connecting opposite hyperbolic barycenters of ideal hexagons constructed according to Theorem~\ref{th:hex}.

\begin{figure}[!hbt]
    \centering
    \includegraphics[width=0.6\textwidth]{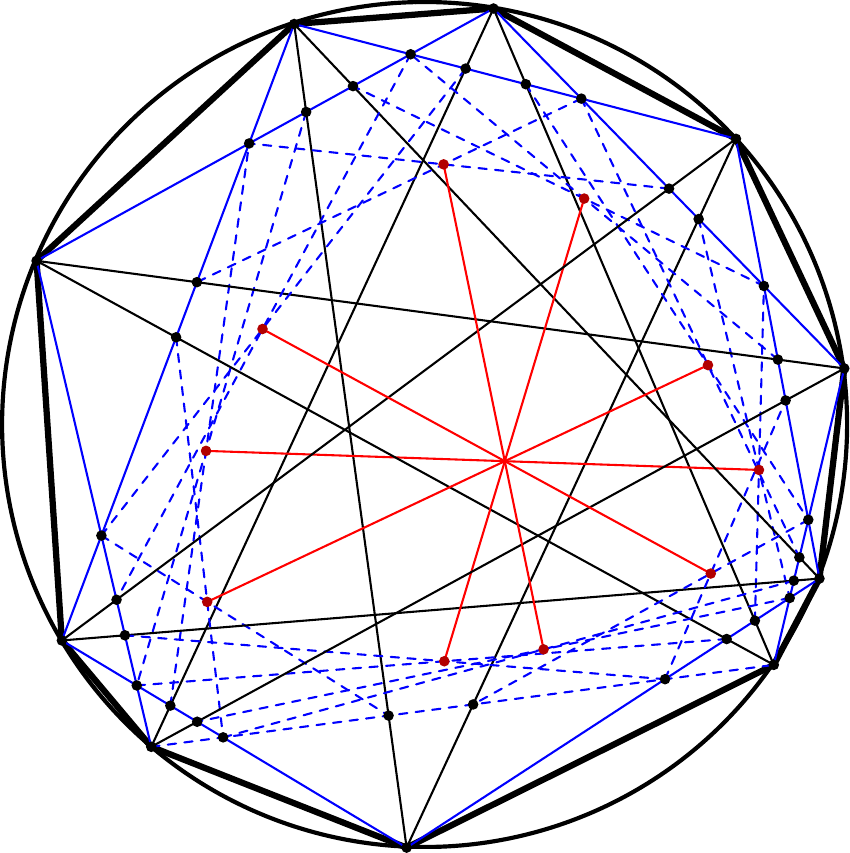}
    \caption{Recursive construction of the hyperbolic barycenter of an ideal decagon. 
    }
    \label{fig:decagon}
\end{figure}

  \section{Moduli of ideal polygons with fixed hyperbolic barycenter}\label{sec:moduli}

Let us denote by $\mathcal{M}_n(S)=\mathcal{M}_n(\alpha,\beta)$ the moduli space of ideal $n$-gons having a fixed $S=(\alpha,\beta)\in\mathbb{H}
$ as their 
hyperbolic barycenter. We immediately see from the coordinatization of $S_\mathbf{P}$ in \eqref{eq:Sym} that \begin{equation}\label{eq:moduli}\begin{cases}I_\mathbf{P}-K_\mathbf{P}=\alpha\cdot(I_\mathbf{P}+K_\mathbf{P});\\ 2J_\mathbf{P}=\beta\cdot(I_\mathbf{P}+K_\mathbf{P})\end{cases}\end{equation} defines a subspace of codimension at most $2$ in the moduli space $\mathcal{P}_n$ of ideal hyperbolic $n$-gons. We would like to have a more geometrically meaningful description of $\mathcal{M}_n(S)$ than merely the algebraic conditions in \eqref{eq:moduli}. Here we accomplish this only for the smallest values of $n=3,4$. 

\subsection{Moduli of ideal triangles with fixed hyperbolic barycenter}

The space $\mathcal{P}_3$ of all ideal hyperbolic triangles is three-dimensional, hence the subspaces $\mathcal{M}_3(S)$ are expected to be one-dimensional. It is clear from the minimality property of $S_\mathbf{P}$ in Theorem~\ref{th:Klei} that all the triangles obtained from $\mathbf{P}$ by a hyperbolic rotation about $S_\mathbf{P}$ share the same hyperbolic barycenter, yielding a one-dimensional family, as expected. It is easy to see that this is indeed all of $\mathcal{M}_3(S_\mathbf{P})$, as we show next.

\begin{lm}\label{lm:triangle-rotation}
    Two ideal triangles $\mathbf{P}_1$ and $\mathbf{P}_2$ share the same hyperbolic barycenter $S_{\mathbf{P}_1}=S=S_{\mathbf{P}_2}$ if and only if they are obtained from one another by a hyperbolic rotation about $S$.
\end{lm}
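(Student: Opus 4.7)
The plan is to deduce the lemma from two standard ingredients: (i) the \emph{equivariance} of the barycenter assignment $\mathbf{P}\mapsto S_\mathbf{P}$ under hyperbolic isometries, which follows at once from the uniqueness of the minimizer in Theorem~\ref{th:Klei}; and (ii) the classical fact that the orientation-preserving isometry group of $\mathbb{H}$, acting on the absolute as $PSL(2,\mathbb{R})$ on $\mathbb{RP}^1$, acts simply transitively on ordered triples of distinct ideal points sharing a common cyclic orientation. Together these will force any two ideal triangles with a common barycenter $S$ to be related by an isometry fixing $S$, and such an isometry is by definition a hyperbolic rotation about $S$.

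\textbf{Equivariance and the easy direction.} I would first verify that for any orientation-preserving hyperbolic isometry $\rho$ of $\mathbb{H}$, extended to the absolute, the ideal polygon $\rho(\mathbf{P}):=(\rho(p_1),\ldots,\rho(p_n))$ satisfies $S_{\rho(\mathbf{P})}=\rho(S_\mathbf{P})$. Since $\rho$ preserves hyperbolic point-to-geodesic distances, one has $\mathcal{Q}_{\rho(\mathbf{P})}(\rho(X))=\mathcal{Q}_\mathbf{P}(X)$ for every $X\in\mathbb{H}$, so Theorem~\ref{th:Klei} and the uniqueness of the minimizer force the claimed equivariance. The ``if'' direction of the lemma is then automatic: if $\rho$ is a hyperbolic rotation about $S=S_{\mathbf{P}_1}$, then $S_{\rho(\mathbf{P}_1)}=\rho(S)=S$, so $\rho(\mathbf{P}_1)$ also has hyperbolic barycenter $S$.

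\textbf{The converse.} For the ``only if'' direction, I would choose orderings of the vertices of $\mathbf{P}_1$ and $\mathbf{P}_2$ so that the two triples share a common cyclic orientation on the absolute. Simple transitivity of $PSL(2,\mathbb{R})$ on such triples then produces a unique orientation-preserving hyperbolic isometry $\rho$ with $\rho(\mathbf{P}_1)=\mathbf{P}_2$, and the equivariance above yields $\rho(S)=S_{\rho(\mathbf{P}_1)}=S_{\mathbf{P}_2}=S$. An orientation-preserving isometry of $\mathbb{H}$ fixing a point $S\in\mathbb{H}$ is precisely a hyperbolic rotation about $S$, so $\mathbf{P}_2$ is obtained from $\mathbf{P}_1$ by such a rotation, as required. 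I do not foresee a genuine technical obstacle here: the only mild bookkeeping is the consistent choice of orientations, and the substantive ingredient is the isometry-equivariance of $S_\mathbf{P}$, which I expect will be reused when analyzing the moduli spaces $\mathcal{M}_n(S)$ for larger $n$.
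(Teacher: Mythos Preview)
Your argument is correct, but it takes a genuinely different route from the paper's. The paper reduces (via the same equivariance you invoke) to the case $S=(0,0)$ and then shows by a direct algebraic computation that the only ideal triangles with barycenter at the origin are the regular ones: setting $p_3=0$, the conditions $J_\mathbf{P}=0$ and $I_\mathbf{P}-K_\mathbf{P}=0$ force $\{p_1,p_2\}=\{\pm\sqrt{3}\}$. Any two regular triangles centered at the origin differ by a Euclidean (hence hyperbolic) rotation, which finishes the proof. Your approach instead bypasses all computation by appealing to the sharp $3$-transitivity of $PSL(2,\mathbb{R})$ on cyclically ordered boundary triples: this produces the isometry $\rho$ directly, and equivariance pins down $\rho(S)=S$. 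Your version is cleaner and more conceptual, and it makes transparent exactly why the case $n=3$ is special (the isometry group is just large enough). The paper's computation, on the other hand, is self-contained within the $I,J,K$ formalism already set up, and it yields as a byproduct the explicit normal form of a triangle with barycenter at the origin---information in the same spirit as the explicit determination of $\Gamma(r,0)$ carried out shortly afterward.
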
 
    \begin{proof} It suffices to show that if an ideal triangle $\mathbf{P}=(p_1,p_2,p_3)$ has $S_\mathbf{P}=(0,0)$ then it is regular. This follows from a simple algebraic computation. If we rotate such a triangle so that one of its vertices is at $0\in\mathbb{R}\mathbb{P}^1$, say $p_3=0$, then $2J_\mathbf{P}=0$ if and only if $p_1+p_2=0$, and $I_\mathbf{P}-K_\mathbf{P}=0$ if and only if $p_1^2p_2^2-p_1p_2-(p_1-p_2)^2=0$, which together imply $\{p_1,p_2\}=\{\pm\sqrt{3}\}$, as required. 
    \end{proof}

An alternative construction of $\mathcal{M}_3(S)$ is as follows. Given an ideal hyperbolic triangle $\mathbf{P}$, consider the ideal hyperbolic triangle $\hat{\mathbf{P}}$, whose ideal vertex $\hat{P}_\ell$ is the hyperbolic reflection of $P_\ell$ with respect to its opposite side $\xi_{\ell+1}=P_{\ell+1}P_{\ell+2}$. Then the points $P_\ell$, $\hat{P}_\ell$, and $P_{\ell+1}^*$ are collinear by construction. Since $S_\mathbf{P}$ is the hyperbolic orthocenter of $\mathbf{P}$, it also lies on $P_\ell P_{\ell+1}^*$, and by symmetry (since $\hat{\hat{\mathbf{P}}}=\mathbf{P}$) we see that $\hat{P}_{\ell+1}^*$ is also on this line. Hence $S_\mathbf{P}$ is the hyperbolic orthocenter of $\hat{\mathbf{P}}$.

\begin{lm} \label{lm:six-points}
    The six polars $P_1^*,P_2^*,P_3^*, \hat{P}_1^*, \hat{P}_2^*, \hat{P}_3^*$ all lie on the same conic $\Gamma_\mathbf{P}$.
\end{lm}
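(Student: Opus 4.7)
The plan is to identify a useful projective involution and then to invoke the converse of Pascal's theorem (Braikenridge--Maclaurin).

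First I would consider the hyperbolic rotation $\rho$ by $\pi$ about $S = S_\mathbf{P}$, which realized projectively is the harmonic homology with center $S$ and axis the polar line $s^*$ of $S$ with respect to $\mathbf{O}$. For any $X\in\mathbf{O}$, the image $\rho(X)$ is the second intersection of the line $XS$ with $\mathbf{O}$. Since $S$ is the hyperbolic orthocenter of $\mathbf{P}$ it lies on each altitude $P_\ell P_{\ell+1}^*$, and since $\hat P_\ell$ is by construction the second $\mathbf{O}$-intersection of this altitude, we get $\rho(P_\ell)=\hat P_\ell$. Because $\rho$ preserves $\mathbf{O}$ it preserves the polarity with respect to $\mathbf{O}$, and therefore it sends the pole $P_\ell^*$ of the chord $P_\ell P_{\ell+1}$ to the pole of $\hat P_\ell\hat P_{\ell+1}$, namely $\hat P_\ell^*$, for every $\ell$.

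I would then apply Braikenridge--Maclaurin to the hexagon $P_1^*\,\hat P_2^*\,P_3^*\,\hat P_1^*\,P_2^*\,\hat P_3^*$ taken in this cyclic order. Its three pairs of opposite sides are $(P_1^*\hat P_2^*,\,\hat P_1^* P_2^*)$, $(\hat P_2^* P_3^*,\,P_2^*\hat P_3^*)$, and $(P_3^*\hat P_1^*,\,\hat P_3^* P_1^*)$; by the preceding step each such pair has the form $(L,\,\rho(L))$. Since $\rho$ interchanges the two members of such a pair as sets, the point $L\cap\rho(L)$ is fixed by $\rho$; but the fixed-point set of $\rho$ is precisely $\{S\}\cup s^*$, so in generic position this intersection lies on the axis $s^*$. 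The three intersections of opposite sides are therefore all collinear on $s^*$, and Braikenridge--Maclaurin then produces the desired common conic $\Gamma_\mathbf{P}$ through the six polars.

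The main obstacle will be dispatching the degenerate configurations in which one of these opposite sides happens to pass through $S$ (which forces $L=\rho(L)$ and collapses the intersection to the entire line rather than a point). Such configurations cut out a closed lower-dimensional subset of the moduli space $\mathcal{M}_3(S)$ from Lemma~\ref{lm:triangle-rotation}, and the conclusion extends to them either by continuity --- since ``six points lie on a common conic'' is a closed condition, being the vanishing of a $6\times 6$ determinant in the coordinates of the polars --- or by reordering the cyclic sequence of vertices so as to avoid the degenerate coincidence.
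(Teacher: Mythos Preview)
Your proof is correct and takes a genuinely different route from the paper's.

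The paper argues via Poncelet closure: the conic $\Gamma$ through five of the six polars carries the triangle $P_1^*P_2^*P_3^*$ as a $3$-periodic Poncelet orbit circumscribed about $\mathbf{O}$, so by the Poncelet porism the partial orbit $\hat P_1^*,\hat P_2^*$ (whose connecting segment is also tangent to $\mathbf{O}$) must close up in three steps, forcing $\hat P_3^*\in\Gamma$. Your argument instead exploits the half-turn $\rho$ about $S$, observes that $\rho$ exchanges $P_\ell^*$ with $\hat P_\ell^*$, and then applies Braikenridge--Maclaurin to a hexagon whose opposite sides come in $\rho$-paired lines, so that their intersections land on the axis $s^*$.

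Your approach is more elementary in that it replaces Poncelet with Pascal's converse, and it yields extra structural information for free: the conic $\Gamma_\mathbf{P}$ is invariant under the harmonic homology $\rho$ (hence has $S$ and $s^*$ as a pole--polar pair), and the Pascal line of your hexagon is precisely $s^*$. The paper's approach is shorter and avoids any case analysis, and it foreshadows the Poncelet description of $\mathcal{M}_3(S)$ that follows in Theorem~\ref{th:triangle-moduli}. Your handling of the degenerate configurations by continuity (vanishing of the $6\times6$ determinant being a closed condition on the $3$-dimensional space of ideal triangles) is adequate; the alternative ``reordering'' fix is less clean, since several pairs could degenerate simultaneously.
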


\begin{proof}
    Consider the unique conic $\Gamma$ passing through the first five of these points, except possibly $\hat{P}_3^*$. Then we see that $P_1^*,P_2^*,P_3^*$ form a $3$-periodic Poncelet trajectory on $\Gamma$ circumscribed about $\mathbf{O}$. By the Poncelet Theorem \cite[Thm.~2.14]{Dragovic}, the Poncelet trajectory on $\Gamma$ containing $\hat{P}_1^*$ and $\hat{P}_2^*$ is also $3$-periodic, whence $\hat{P}_3^*$ also lies on $\Gamma$.
\end{proof}

\begin{thm}\label{th:triangle-moduli}
    The moduli space $\mathcal{M}_3(S_\mathbf{P})$ consists of the ideal triangles whose ideal vertices are the tangency points of polar triangles circuminscribed in the $3$-periodic Poncelet pair $(\Gamma_\mathbf{P},\mathbf{O})$, where $\Gamma_\mathbf{P}$ is the conic associated in Lemma~\ref{lm:six-points} with $\mathbf{P}$. The conic $\Gamma_\mathbf{P}=\Gamma(S_\mathbf{P})$ depends only on $S_\mathbf{P}$, \mbox{and not on $\mathbf{P}$.}
\end{thm}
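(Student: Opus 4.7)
The plan is to exploit the rotational symmetry of $\mathcal{M}_3(S)$ to build a canonical invariant conic, which I will then identify with $\Gamma_\mathbf{P}$ via Lemma~\ref{lm:six-points}. By Lemma~\ref{lm:triangle-rotation}, $\mathcal{M}_3(S)$ is a single orbit of the one-parameter group $\{R_\phi\}$ of hyperbolic rotations about $S$. I would first construct a ``regular'' triangle $\mathbf{P}_0 \in \mathcal{M}_3(S)$ by fixing any $Q \in \mathbf{O}$ and setting $\mathbf{P}_0 := (Q, R_{2\pi/3}(Q), R_{4\pi/3}(Q))$: the $\mathbb{Z}_3$-symmetry of $\mathbf{P}_0$ under $R_{2\pi/3}$ makes the functional $\mathcal{Q}_{\mathbf{P}_0}$ of Theorem~\ref{th:Klei} invariant under $R_{2\pi/3}$, so its unique minimizer $S_{\mathbf{P}_0}$ must coincide with the fixed point $S$ of the rotation.

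Next, I would define $\Gamma(S)$ as the orbit of $(P_0)_1^*$ under $\{R_\phi\}$. Conjugating $\{R_\phi\}$ by a hyperbolic isometry sending $S$ to the origin turns it into the group of Euclidean rotations about $(0,0)$, whose orbits are Euclidean circles; since conjugation is projective, every orbit of $\{R_\phi\}$ in $\mathbb{P}^2$ is a conic. The $\mathbb{Z}_3$-symmetry of $\mathbf{P}_0$ cyclically permutes its polars, so all three of $(P_0)_1^*, (P_0)_2^*, (P_0)_3^*$ lie on $\Gamma(S)$. Since the polar correspondence commutes with every projective automorphism of $\mathbf{O}$, for any $\mathbf{P} = R_\phi(\mathbf{P}_0) \in \mathcal{M}_3(S)$ the polars $P_\ell^* = R_\phi((P_0)_\ell^*)$ also lie on $\Gamma(S)$; hence $\mathbf{P}^*$ is inscribed in $\Gamma(S)$ and (by construction) circumscribes $\mathbf{O}$, exhibiting a $3$-periodic Poncelet trajectory on the pair $(\Gamma(S), \mathbf{O})$.

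To conclude the first assertion, Poncelet's closure theorem ensures that every trajectory on $(\Gamma(S), \mathbf{O})$ is $3$-periodic, generating a one-parameter family of Poncelet triangles whose contact triangles (inscribed in $\mathbf{O}$) contain $\mathcal{M}_3(S)$ as a connected one-dimensional subfamily; both being connected compact one-manifolds, they must coincide. For the second assertion $\Gamma_\mathbf{P} = \Gamma(S_\mathbf{P})$, the key observation is that the argument preceding Lemma~\ref{lm:six-points} identifies $S_\mathbf{P}$ as the hyperbolic orthocenter of $\hat{\mathbf{P}}$ as well as of $\mathbf{P}$, so $\hat{\mathbf{P}} \in \mathcal{M}_3(S_\mathbf{P})$ and its polars $\hat{P}_\ell^*$ also lie on $\Gamma(S_\mathbf{P})$; thus both $\Gamma_\mathbf{P}$ and $\Gamma(S_\mathbf{P})$ contain the six polars listed in Lemma~\ref{lm:six-points}, forcing their equality (five in general position suffice). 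The main subtlety I anticipate is cleanly establishing that orbits of $\{R_\phi\}$ in the ambient $\mathbb{P}^2$ really are conics (addressed via the projective conjugation argument), together with a continuity argument to handle any exceptional $\mathbf{P}$ where the six polars degenerate and no longer uniquely determine a conic.
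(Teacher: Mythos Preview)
Your approach is correct and rests on the same core idea as the paper's: both use Lemma~\ref{lm:triangle-rotation} to identify $\mathcal{M}_3(S)$ with a single orbit of hyperbolic rotations about $S$, and then observe that such an orbit of polars is a conic. The paper executes this in one stroke by first applying an isometry sending $S$ to the origin, where hyperbolic rotations become Euclidean rotations and the polar orbit is manifestly a circle concentric with $\mathbf{O}$; you instead keep $S$ general and recover the same fact via projective conjugation, which is the identical argument viewed from the other end. Your explicit identification of $\Gamma(S)$ with the $\Gamma_\mathbf{P}$ of Lemma~\ref{lm:six-points} (by noting $\hat{\mathbf{P}}\in\mathcal{M}_3(S_\mathbf{P})$ so that all six polars lie on your orbit conic) is a nice touch that the paper leaves implicit, and your Poncelet-plus-dimension argument for the reverse inclusion is sound though unnecessary once one knows both families are parametrized by the same rotation angle.
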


\begin{proof}
    After applying an isometry sending $S$ to $(0,0)$, it suffices to prove this for the special case $S=(0,0)$, which is an immediate consequence of Lemma~\ref{lm:triangle-rotation}.
\end{proof}

In order to find $\Gamma(S)$ explicitly it suffices to find $\Gamma(r,0)$ for $0\leq r<1$, and then rotate by $\theta$ to reach any desired $S=(r\cos\theta,r\sin\theta)\in\mathbb{H}$. There are two special triangles $\mathbf{P}$ and $\hat{\mathbf{P}}$ such that $S_\mathbf{P}=S_{\hat{\mathbf{P}}}=(r,0)$ with $p_3=0$ and $\hat{p}_3=\infty$, respectively. We find $p_1+p_2=0=\hat{p}_1+\hat{p}_2$  (from $J=0$), and then that $p_1=\pm\sqrt{3\frac{1-r}{1+r}}$ and $ \hat{p}_1 =\frac{-p_1}{3}$ (from $I-J=r(I+J)$). We then compute from \eqref{eq:polar} that
\[
    P_1^*=\left(\tfrac{r-2}{1-2r},0\right);\ \  P_2^*=(1,-p_1);\ \  P_3^* = (1,p_1); \ \ 
    \hat{P}_1^* = \left(\tfrac{r+2}{2r+1},0\right);\ \  \hat{P}_2^* = (-1,3p_1^{-1}) ; \ \  \hat{P}_3^* = (-1,-3p_1^{-1}).
\]
It is trivial to verify that \[\Gamma(r,0)\ : \ \bigl(1-4r^2\bigr)x^2 + 6rx +\bigl(1-r^2)y^2+r^2-4 = 0\] is the unique (overdetermined!) conic passing through all six polars as in Lemma~\ref{lm:six-points}. The following amusing observation is then immediate.
\begin{cor}
    The Poncelet conic $\Gamma(S)$ is an ellipse (resp., a parabola; resp., a hyperbola) if and only if $|S|<1/2$ (resp., $|S|=1/2$; resp., $|S|>1/2$), where $|S|$ denotes the Euclidean norm of $S$.
\end{cor}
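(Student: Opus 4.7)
The plan is to read the type of $\Gamma(S)$ directly off the explicit equation of $\Gamma(r,0)$ already obtained above, and then use rotational symmetry to transfer the result to arbitrary $S\in\mathbb{H}$. Concretely, since hyperbolic rotations about the origin in the Klein--Beltrami model coincide with Euclidean rotations, and since Euclidean rotations are affine (even orthogonal) transformations, the rotated conic $\Gamma(S)$ obtained from $\Gamma(|S|,0)$ by a rotation through the argument of $S$ has the same affine type (ellipse, parabola, or hyperbola) as $\Gamma(|S|,0)$. Thus it suffices to determine the type of $\Gamma(r,0)$ as a function of $r=|S|\in[0,1)$.

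For this I would appeal to the standard affine classification of conics via the discriminant of the degree-$2$ part. Writing
\[
\Gamma(r,0)\ :\ Ax^2+Bxy+Cy^2+Dx+Ey+F=0
\]
with $A=1-4r^2$, $B=0$, $C=1-r^2$, $D=6r$, $E=0$, $F=r^2-4$, the affine type is determined by the sign of $B^2-4AC=-4(1-4r^2)(1-r^2)$.

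Since $0\leq r<1$ one has $1-r^2>0$, so the sign of $B^2-4AC$ agrees with the sign of $-(1-4r^2)$. This yields the trichotomy $B^2-4AC<0$, $=0$, or $>0$ according as $r<1/2$, $r=1/2$, or $r>1/2$, which is precisely the claimed classification. The only mild caveat is to make sure $\Gamma(r,0)$ is genuinely a non-degenerate conic for every $r\in[0,1)$, but this is immediate: the six distinct polars listed just above all lie on it, so it cannot degenerate to a pair of lines or a point.

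I do not anticipate any real obstacle here: the hard work has already been done in pinning down the explicit equation of $\Gamma(r,0)$, and what remains is a one-line discriminant computation plus the observation that Euclidean rotations preserve the affine type of a conic.
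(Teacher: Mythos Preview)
Your proposal is correct and is exactly the kind of ``immediate'' discriminant check the paper has in mind: reduce by rotation to $\Gamma(r,0)$ and read off the sign of $-4(1-4r^2)(1-r^2)$. The only slightly soft spot is the non-degeneracy argument---six points on a conic do not by themselves preclude a pair of lines---but this is easily patched by computing the full $3\times 3$ determinant $-4(1-r^2)^3\neq 0$ for $r<1$.
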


\begin{figure}[!hbt]
    \centering
    \includegraphics[width=0.47\textwidth] {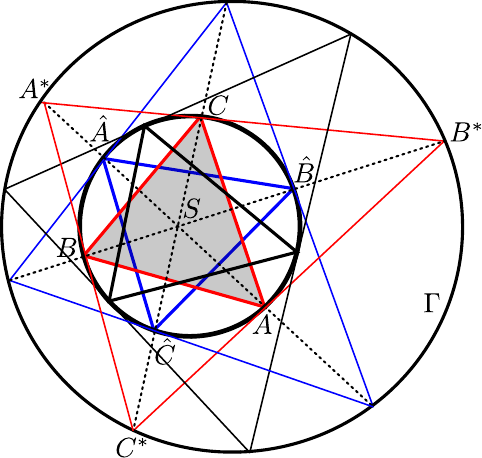} \hspace{.175in}\includegraphics[width=0.47\textwidth]{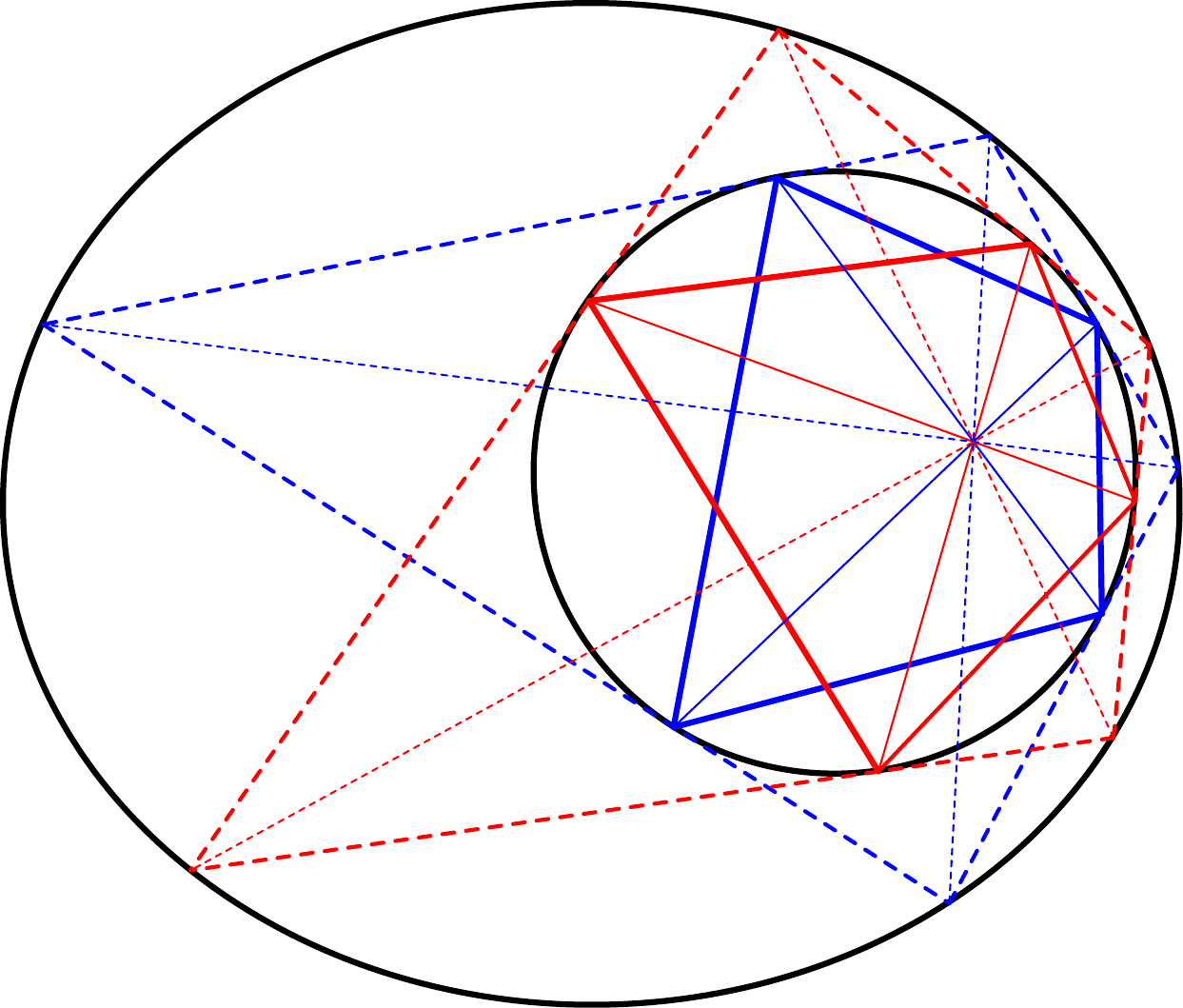}
    \caption{ Left: Poncelet conic of all ideal triangles with a common hyperbolic barycenter. Right: one representative of the pencil of Poncelet conics of ideal quadrilaterals with a common hyperbolic barycenter.}
    \label{fig:Iter}
\end{figure}

\subsection{Moduli of ideal quadrilaterals with fixed hyperbolic barycenter}

The space $\mathcal{P}_4$ of all ideal hyperbolic quadrilaterals is four-dimensional, hence the subspaces $\mathcal{M}_4(S)$ are expected to be two-dimensional. Theorem~\ref{th:quad-barycenter} immediately yields a compact and satisfying description of $\mathcal{M}_4(S)$: choose any two distinct hyperbolic lines $\delta_1$ and $\delta_2$ intersecting at $S$ to be the diagonals of the arbitrary ideal quadrilateral $\mathbf{P}$ with $S_\mathbf{P}$. This construction exhausts all of $\mathcal{M}_4(S)$, and is roughly analogous to our first description of $\mathcal{M}_3(S)$ in terms of rotations about~$S$ in Lemma~\ref{lm:triangle-rotation}. Inspired by the description of $\mathcal{M}_3(S)$ in terms of Poncelet conics afforded by Theorem~\ref{th:triangle-moduli}, we seek a similar description of $\mathcal{M}_4(S)$. 

Given an ideal hyperbolic quadrilateral $\mathbf{P}$, its four polars $P_\ell^*$ determine a unique pencil of conics $\boldsymbol{\Gamma}_\mathbf{P}$ passing through these four points. For a generic fifth point $Q$, let $\Gamma_\mathbf{P}(Q)$ be the unique conic on this pencil that also passes through $Q$. Then $(\Gamma_\mathbf{P}(Q),\mathbf{O})$ forms a $4$-periodic Poncelet pair, whence by the Poncelet Theorem \cite[Theorem~2.14]{Dragovic} $Q=Q_1^*$ is polar to the side of a unique ideal quadrilateral $\mathbf{Q}$ all of whose polars $Q_\ell^*$ are also on $\Gamma_\mathbf{P}(Q)$. It is not obvious (and for us, not even expected) that we should always have $S_\mathbf{Q}=S_\mathbf{P}$ as apparently depicted in Figure~\ref{fig:Iter}. In the next result we show that this is indeed the case, and that this construction exhausts all of $\mathcal{M}_4(S_\mathbf{P})$.

\begin{thm}\label{th:quad-moduli} The moduli space $\mathcal{M}_4(S_\mathbf{P})$ consists of the ideal quadrilaterals whose ideal vertices are the tangency points of polar quadrilaterals circuminscribed in a $4$-periodic Poncelet pair $(\Gamma,\mathbf{O})$, for some conic $\Gamma$ in the pencil $\boldsymbol{\Gamma}_\mathbf{P}$.
\end{thm}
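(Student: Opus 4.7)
The plan is to establish both inclusions, and a dimension count supports their equality: $\mathcal{M}_4(S_\mathbf{P})$ is cut out by the two equations \eqref{eq:moduli} inside the $4$-dimensional $\mathcal{P}_4$, hence is $2$-dimensional, matching the $2$-parameter family given by the choice of $\Gamma\in\boldsymbol{\Gamma}_\mathbf{P}$ together with the choice of a starting vertex on a $4$-periodic Poncelet trajectory of $(\Gamma,\mathbf{O})$.

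For the backward inclusion ($\supseteq$), first note that by Poncelet's Closure Theorem applied to the trajectory $P_1^*P_2^*P_3^*P_4^*$ (inscribed in every $\Gamma\in\boldsymbol{\Gamma}_\mathbf{P}$ and circumscribed about $\mathbf{O}$), every such $\Gamma$ forms a $4$-periodic Poncelet pair with $\mathbf{O}$. I would then show that the diagonals of any such trajectory $R_1R_2R_3R_4$ intersect precisely at $S_\mathbf{P}$, at which point Corollary~\ref{cor:big-diagonals} yields $S_\mathbf{Q}=S_\mathbf{P}$ for the associated ideal quadrilateral $\mathbf{Q}$. The key projective input is to identify the common self-polar triangle of $(\Gamma,\mathbf{O})$ with the diagonal triangle $T_\mathbf{P}$ of the four base points $P_1^*,\ldots,P_4^*$. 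This combines two classical facts: the diagonal triangle of a quadrilateral inscribed in a conic (resp.\ circumscribed about a conic) is self-polar with respect to that conic; and the pencil of conics through four points admits the diagonal triangle of those points as a common self-polar triangle. Hence both the diagonal triangle of $R_1R_2R_3R_4$ and $T_\mathbf{P}$ (the latter being self-polar with respect to $\mathbf{O}$ because $P_1^*P_2^*P_3^*P_4^*$ is circumscribed about $\mathbf{O}$, and with respect to every $\Gamma\in\boldsymbol{\Gamma}_\mathbf{P}$ by the pencil property) are common self-polar triangles of $(\Gamma,\mathbf{O})$. The generic uniqueness of the common self-polar triangle of two conics forces them to coincide, and a continuity deformation along the connected pencil starting at $\Gamma_0$ identifies $S_\mathbf{P}$ as the ``internal'' (diagonal-intersection) vertex of $T_\mathbf{P}$ throughout.

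For the forward inclusion ($\subseteq$), given $\mathbf{Q}\in\mathcal{M}_4(S_\mathbf{P})$, let $\Gamma:=\Gamma_\mathbf{P}(Q_1^*)$ be the unique conic in $\boldsymbol{\Gamma}_\mathbf{P}$ through $Q_1^*$, and let $\mathbf{Q}'$ be the ideal quadrilateral arising from the Poncelet trajectory on $(\Gamma,\mathbf{O})$ starting at $Q_1^*$. The backward inclusion gives $S_{\mathbf{Q}'}=S_\mathbf{P}$, while by construction $Q_1^*$ is the polar of one side of $\mathbf{Q}'$. The equality $\mathbf{Q}'=\mathbf{Q}$ would then follow from a simple uniqueness observation: a single polar $Q_1^*$ together with the hyperbolic barycenter $S$ uniquely determine the (unordered) ideal quadrilateral, since $Q_1^*$ determines the line $\xi_1=Q_1Q_2$ as its polar with respect to $\mathbf{O}$ (whence $\{Q_1,Q_2\}=\xi_1\cap\mathbf{O}$), and then Theorem~\ref{th:quad-barycenter} forces $Q_3,Q_4$ to be the second intersections of the lines $Q_1S$ and $Q_2S$ with $\mathbf{O}$. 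The main obstacle lies in the projective identification in the backward step: establishing that the common self-polar triangle of $(\Gamma,\mathbf{O})$ is invariantly $T_\mathbf{P}$ across the whole pencil, and tracking which of its three vertices is the diagonal-intersection one, even when some of the four intersections $\Gamma\cap\mathbf{O}$ become complex conjugate pairs.
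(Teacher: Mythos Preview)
Your approach is sound but genuinely different from the paper's. The paper does not work projectively with self-polar triangles at all. Instead, it first applies a hyperbolic isometry sending $S_\mathbf{P}$ to the origin, so that $\mathbf{P}$ becomes a Euclidean rectangle and the polar quadrilateral $\mathbf{P}^*$ a rhombus with vertices on the coordinate axes. A direct check then shows every non-degenerate conic in $\boldsymbol{\Gamma}_\mathbf{P}$ is a \emph{central} conic, and the whole statement is reduced to a separate configuration lemma: for any central conic $\Gamma$ with $(\Gamma,\mathbf{O})$ a $4$-periodic Poncelet pair, every circuminscribed quadrilateral is a rhombus centered at the origin. That lemma is proved by an elementary symmetry/ordering argument on $\Gamma$. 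Corollary~\ref{cor:big-diagonals} then finishes both inclusions simultaneously.

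What each approach buys: the paper's route is completely elementary and avoids any appeal to the projective theory of self-polar triangles or to genericity, at the cost of an \emph{ad hoc} normalization and an auxiliary lemma. Your route is coordinate-free and explains \emph{why} the diagonal-intersection point is invariant along the pencil (it is a vertex of the common self-polar triangle of $(\Gamma,\mathbf{O})$, which coincides with the diagonal triangle $T_\mathbf{P}$ of the base points), and your forward-inclusion argument via ``one polar plus $S$ determines the quadrilateral'' is cleaner than in the paper. The obstacle you flag is smaller than you suggest: the vertex identification needs no continuity, because a self-polar triangle with respect to $\mathbf{O}$ has exactly one vertex inside $\mathbf{O}$, and both $S_\mathbf{P}$ (by Corollary~\ref{cor:big-diagonals}) and the diagonal point of $R_1R_2R_3R_4$ (which, again by Corollary~\ref{cor:big-diagonals}, equals the diagonal point of the inscribed $\mathbf{Q}$) lie inside $\mathbf{O}$, hence both are that unique interior vertex. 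The only genuine care needed is the uniqueness of the common self-polar triangle when $\Gamma$ and $\mathbf{O}$ are not in general position; this is a codimension-one locus in the pencil and your continuity argument (or a direct limit) handles it.
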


\begin{proof}
    After applying an isometry sending $S$ to $(0,0)$, it suffices to prove this for the special case $S=(0,0)$. By Corollary~\ref{cor:big-diagonals}, it suffices to prove the claim that for any $\mathbf{P}\in\mathcal{M}_4(0,0)$, any $\Gamma$ in $\boldsymbol{\Gamma}_\mathbf{P}$, and any $\mathbf{Q}^*$ circuminscribed in $(\Gamma,\mathbf{O})$, the diagonals of $\mathbf{Q}^*$ intersect at the origin.
    
    To prove the claim, first note that $S_\mathbf{P}=(0,0)$ if and only if $\mathbf{P}$ is a rectangle, which after a harmless rotation about the origin may be assumed to have its sides parallel to the coordinate axes. Then the polar quadrilateral $\mathbf{P}^*=P_1^*P_2^*P_3^*P_4^*$ is a rhombus whose vertices lie on the coordinate axes. By comparing the general form of a conic evaluated at each pair $P_\ell^*$ and $P_{\ell+2}^*$, we discover immediately that every (non-degenerate) $\Gamma$ in the pencil $\boldsymbol{\Gamma}_\mathbf{P}$ must be a central ellipse. Our claim follows immediately from the following general Lemma~\ref{lm:quad-orbit}.\qedhere
\end{proof}

We have severed the following configuration result from the rest of the proof of Theorem~\ref{th:quad-moduli} due to its independent interest. We provide a short conceptual proof because we have not found a reference for it in the literature, even though it is surely well-known to the experts.

\begin{lm}\label{lm:quad-orbit}
    If $\Gamma$ is a central conic such that $(\Gamma,\mathbf{O})$ forms a $4$-periodic Poncelet pair then every circuminscribed polygon in $(\Gamma,\mathbf{O})$ is a rhombus centered at the origin.
\end{lm}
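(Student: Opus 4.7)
The plan is to exploit the central symmetry $\sigma\colon P\mapsto -P$ of the plane, which preserves both $\mathbf{O}$ and $\Gamma$ (the latter by hypothesis of centrality) and which preserves tangency, hence carries $4$-periodic Poncelet polygons to $4$-periodic Poncelet polygons. I aim to show that for any such polygon $V_1V_2V_3V_4$, one has $V_3=-V_1$ and $V_4=-V_2$, whence the polygon is centered at the origin, a parallelogram, and (being tangential to the circle $\mathbf{O}$) a rhombus.

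The crux is to show $\sigma=T^2$ as maps $\Gamma\to\Gamma$, where $T$ denotes the Poncelet map sending each vertex of a Poncelet polygon to the next. For this I would invoke the standard Poncelet parameterization (cf.~\cite{Dragovic}): on $\Gamma$ there is an angular coordinate $\theta\in\mathbb{R}/L\mathbb{Z}$ in which $T$ becomes the translation $\theta\mapsto\theta+L/4$ dictated by $4$-periodicity. The restriction of $\sigma$ to $\Gamma$ is an orientation-preserving involution, since rotation by $\pi$ is orientation-preserving on $\mathbb{R}^2$ and acts on the natural parameterization $t\mapsto(a\cos t,b\sin t)$ of an axis-aligned central ellipse as $t\mapsto t+\pi$. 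Because the construction of $T$ from $\Gamma$, $\mathbf{O}$, and tangency is manifestly $\sigma$-equivariant, $\sigma$ commutes with $T$, so in the Poncelet coordinate $\sigma$ is itself a translation by some $\tau$ with $2\tau\equiv 0\pmod L$. Since $\sigma$ is non-trivial, $\tau=L/2=2\cdot(L/4)$, i.e., $\sigma=T^2$.

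Applying this to any vertex $V_1$ of a Poncelet polygon yields $V_3=T^2(V_1)=\sigma(V_1)=-V_1$, and likewise $V_4=-V_2$, so the diagonals $V_1V_3$ and $V_2V_4$ both pass through and bisect each other at the origin. Hence $V_1V_2V_3V_4$ is a parallelogram centered at the origin, and the standard argument (the tangential identity $|V_1V_2|+|V_3V_4|=|V_2V_3|+|V_4V_1|$ combined with the parallelogram equalities of opposite sides) forces it to be a rhombus.

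The main obstacle is the identification $\sigma=T^2$, which uses the Poncelet parameterization as a black box. An elementary alternative would be to exhibit one $\sigma$-invariant Poncelet polygon explicitly (after rotating so $\Gamma$ has axes along the coordinate axes, the rhombus inscribed in $\Gamma$ with vertices on the axes does the job, being tangent to $\mathbf{O}$ precisely when $1/a^2+1/b^2=1$, which is the $4$-periodic Poncelet condition for this configuration) and then propagate $\sigma$-invariance across the $1$-parameter family of Poncelet polygons by continuity; but this route seems harder to make rigorous than appealing to the parameterization directly.
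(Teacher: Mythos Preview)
Your proof is correct and takes a different route from the paper's. Both exploit the central symmetry $\sigma$, but you conclude $\sigma=T^2$ by invoking the Poncelet parameterization from \cite{Dragovic}, whereas the paper argues elementarily: it first treats the special quadrilateral with a vertex on an axis of $\Gamma$ by direct symmetry, and then for a general $\mathbf{Q}$ compares it with $-\mathbf{Q}$ using the monotonicity of the Poncelet map (if one vertex of $\hat{\mathbf{Q}}$ precedes the corresponding vertex of $\mathbf{Q}$, then all do) to derive a contradiction from $Q_3\neq -Q_1$. Your approach is cleaner once the black box is granted; the paper's is self-contained, in keeping with its stated aim of keeping everything elementary. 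One wrinkle worth flagging: the step ``$\sigma$ commutes with $T$, so $\sigma$ is a translation in the Poncelet coordinate'' is not justified by commutation alone---an orientation-preserving circle homeomorphism commuting with a single rotation need not be a rotation. What actually forces it is that $\sigma$, being a projective symmetry of the conic pair, preserves the invariant Poncelet measure (equivalently, acts as an automorphism of the underlying elliptic curve); this is certainly part of the black box you cite, but your phrasing slightly misattributes the reason. Amusingly, the elementary alternative you sketch and then set aside in your final paragraph is essentially what the paper does, with a monotonicity argument in place of your proposed continuity.
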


\begin{proof}
    Let us first show this in the special case where our circuminscribed $\mathbf{Q}$ has one of its vertices, call it $Q_1$, on a vertex of $\Gamma$. Begin constructing the Poncelet orbit in both directions starting from $Q_1$, to obtain $Q_2$ and $Q_4$. By symmetry, these are both the next vertices of the ellipse (as we claim), or both in the hemisphere containing $Q_1$, or both in the hemisphere containing $-Q_1$. We see again by symmetry that the latter two impossibilities would contradict the $4$-periodicity of the Poncelet pair $(\Gamma,\mathbf{O})$.

    We can assume without loss of generality that the axes of $\Gamma$ coincide with the coordinate axes. We ignore from now on the special case already established above. Let us label the vertices of any other $\mathbf{Q}$ circuminscribed in $(\Gamma,\mathbf{O})$ such that $Q_\ell$ lies on the $\ell$-th quadrant. For any other such $\hat{\mathbf{Q}}\neq \mathbf{Q}$ circuminscribed in $(\Gamma,\mathbf{O})$, the vertex $\hat{Q}_\ell$ in the $\ell$-th quadrant lies before (resp., after) $Q_\ell$, relative to the standard counterclockwise orientation of $\Gamma$, for all $\ell$ simultaneously. Suppose that $Q_3$ lies before (resp., after) $-Q_1$ for some $\mathbf{Q}$, contrary to our contention. Then every vertex of $\mathbf{Q}$ lies before (resp., after) every vertex of $-\mathbf{Q}$. But this would imply, by symmetry, that every vertex of $-\mathbf{Q}$ must lie before (resp., after) every vertex of $-(-\mathbf{Q})=\mathbf{Q}$, which is absurd.
\end{proof}

\begin{rmk}
    We see from Theorem~\ref{th:quad-moduli} that for any two ideal quadrilaterals $\ \mathbf{P}\neq \mathbf{Q}$ such that $S_\mathbf{P}=S_\mathbf{Q}$, the eight distinct polars of $\mathbf{P}$ and $\mathbf{Q}$ belong to a conic, which is in fact the unique common conic to both pencils $\boldsymbol{\Gamma}_\mathbf{P}$ and $\boldsymbol{\Gamma}_\mathbf{Q}$.
 \end{rmk}

    In the case of triangles, $\mathcal{M}_3(S_\mathbf{P})$ is triply covered by the conic $\Gamma_\mathbf{P}$ traced out by the polars of the triangles obtained by rotating $\mathbf{P}$ about $S_\mathbf{P}$. This exceptional phenomenon does not occur in general for ideal quadrilaterals $\mathbf{P}$, with an important exception. After an isometry sending $S_\mathbf{P}$ to the origin, it we see that the polars of the rotations of $\mathbf{P}$ about $S_\mathbf{P}$ in general trace out two disjoint conics corresponding to pairs of opposite sides. These two conics coincide precisely when $\mathbf{P}$ is \emph{harmonic}, which case we begin to study systematically in the next section.

    \begin{rmk}
        The five polars of an ideal pentagon $\mathbf{P}$ define a unique conic $\Gamma$, which again by the Poncelet porism yields a one-parameter family of pentagons circuminscribed in $(\Gamma,\mathbf{O})$. Though it might seem natural to hope for all these pentagons to share the same hyperbolic barycenter $S_\mathbf{P}$, we have verified experimentally that they do not. We leave open the tantalizing question of how to describe, in a similarly geometric way as for ideal triangles and quadrilaterals, the corresponding moduli spaces for ideal hyperbolic $n$-gons for $n\geq 5$, beyond the obvious algebraic conditions \eqref{eq:moduli}.
    \end{rmk}

\section{Least-squares points and harmonic polygons}\label{sec:euclidean}

Thus far we have been mainly concerned with entities and phenomena occurring within hyperbolic geometry. In Theorem~\ref{th:Klei}, the hyperbolic barycenter $S_\mathbf{P}$ of an ideal triangle $\mathbf{P}$ is uniquely defined by the property that it minimizes the sum of hyperbolic sines of hyperbolic distances to the sides. Now the classical Theorem~\ref{th:Euc} states that the deKleinization of this same $S_\mathbf{P}$ happens to coincide with the \emph{least-squares point} $L_\mathbf{P}$, uniquely defined by the property that it minimizes the sum of squares of Euclidean distances to the sides. This strange and unexpected coincidence begs the natural question, for which other ideal polygons it might happen that $S_\mathbf{P}=L_\mathbf{P}$. 

In this section we keep considering ``the same'' $\mathbf{P}$ intermittently as an ideal hyperbolic polygon and as an inscribed Euclidean polygon, the first (resp., latter) being the (de)Kleinization of the latter (resp., former). To avoid cumbersome notation, we keep using the same symbol~$\mathbf{P}$, making it clear from context or explicitly each time which geometry we are working with.

An inscribed Euclidean (resp., ideal hyperbolic) polygon $\mathbf{P}$ is \emph{harmonic} if it is obtained from a regular polygon by a projective transformation (resp., hyperbolic isometry). These Euclidean and hyperbolic notions agree under the (de)Kleinization correspondence. Harmonic polygons have been extensively studied since the late 1800s -- see for example \cite{casey, simmons, tarry}, and the recent \cite{Reznik}.

As mentioned in the introduction, one of our initial motivations for this work was to find an explanation for the classical but strange Theorem~\ref{th:Euc}. Our main goal in this section is to provide a simple proof of Theorem~\ref{th:harmonic-coincidence}: for every harmonic polygon $\mathbf{P}$, the hyperbolic barycenter $S_\mathbf{P}$ and the least-squares point $L_\mathbf{P}$ coincide. From this we propose the provocative explanation: $S_\mathbf{P}=L_\mathbf{P}$ for all triangles $\mathbf{P}$ only because all triangles are harmonic.

\begin{rmk}
     The coordinates of the symmedian $S$ of a triangle have been obtained explicitly in \cite{Sadek}, in terms of the equations of the lines formed by the sides of the triangle, using the optimality property in Theorem~\ref{th:Euc} as a defining property of $S$. In principle, the coordinatization of \cite{Sadek} and ours in \eqref{eq:Sym} could be deduced directly from one another by a series of unenlightening algebraic manipulations. Theorem~\ref{th:harmonic-coincidence}, which contains Theorem~\ref{th:Euc} as its first special case, can be understood as a more conceptual explanation for the agreement of the two coordinatizations of the same point.
\end{rmk}

Denoting by $d_\ell(X)$ the Euclidean distance from the point $X$ to the line $\xi_\ell$, we compute that 
\begin{equation}
    \label{eq:EuD} d_\ell(X)=\dfrac{1}{|P_\ell^*|} \left(1\mp P_\ell^*\cdot X\right),
\end{equation}
where the sign depends on whether $X$ and $P_\ell^*$ are on opposite sides or on the same side of $\xi_\ell$, respectively.
For $X$ in the interior of $\mathbf{P}$, we then compute the gradient 
\begin{equation}
    \label{eq:EuBar}
    \nabla\left(\sum_\ell d_\ell^2(X)\right)=2\sum\limits_{\ell}\left(\dfrac{P^*_\ell\cdot X-1}{|P_\ell^*|^2}\right)P^*_\ell. 
\end{equation}
From now on we denote by $\sigma_\ell$ the Euclidean length of $\xi_\ell$.

For the following basic result we include an (unnecessary, but short) argument in the spirit of some of our other computations, to ensure non-circularity.

\begin{lm} \label{lm:euclidean-proportionality}
    The locus of points $X$ satisfying the Euclidean proportionality condition \begin{equation}\label{eq:d-sigma-proportion}[d_{\ell-1}(X) : \sigma_{\ell-1}] =[d_{\ell}(X) : \sigma_{\ell}]\end{equation} is the hyperbolic altitude from $P_\ell$ to its short diagonal $P_{\ell-1}P_{\ell+1}$. 
\end{lm}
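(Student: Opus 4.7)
The plan is to recognize that the proportionality condition \eqref{eq:d-sigma-proportion} is precisely the classical Euclidean symmedian characterization, applied to the sub-triangle $\Delta(P_{\ell-1}P_\ell P_{\ell+1})$ inscribed in $\mathbf{O}$. By \eqref{eq:EuD}, each $d_\ell(X)$ is affine in $X$ (up to a global sign depending on which side of $\xi_\ell$ the point $X$ lies), so \eqref{eq:d-sigma-proportion} cuts out a Euclidean line $\lambda_\ell$. Since $P_\ell\in\xi_{\ell-1}\cap\xi_\ell$, both distances vanish at $P_\ell$, whence $P_\ell\in\lambda_\ell$.

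Next I would identify $\lambda_\ell$ with the Euclidean symmedian line from $P_\ell$ in the triangle $\Delta(P_{\ell-1}P_\ell P_{\ell+1})$. The two sides incident to $P_\ell$ in this triangle are precisely $\xi_{\ell-1}=P_{\ell-1}P_\ell$ and $\xi_\ell=P_\ell P_{\ell+1}$, of Euclidean lengths $\sigma_{\ell-1}$ and $\sigma_\ell$, so \eqref{eq:d-sigma-proportion} matches verbatim the classical defining proportion for the symmedian from $P_\ell$. This proportion can, for instance, be deduced as the isogonal reflection (across the angle bisector at $P_\ell$) of the corresponding median characterization: the midpoint $M$ of $P_{\ell-1}P_{\ell+1}$ satisfies the reciprocal proportion $d(M,\xi_{\ell-1})\,\sigma_{\ell-1}=d(M,\xi_\ell)\,\sigma_\ell$ by equality of the two sub-triangle areas, and the ratio gets inverted under isogonal conjugation.

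To conclude, since $P_{\ell-1},P_\ell,P_{\ell+1}$ all lie on $\mathbf{O}$, the absolute is the circumscribed circle of $\Delta(P_{\ell-1}P_\ell P_{\ell+1})$, and by the very construction of the symmedian recalled in the introduction (Figure~\ref{fig:Sym}), $\lambda_\ell$ is the Euclidean line through $P_\ell$ and the pole $Q^*:=(P_{\ell-1}P_{\ell+1})^*$ of the short diagonal with respect to $\mathbf{O}$. In the Klein--Beltrami model, every line through the pole $Q^*$ meets the geodesic $P_{\ell-1}P_{\ell+1}$ hyperbolically orthogonally, so $\lambda_\ell=P_\ell Q^*$ is exactly the hyperbolic altitude from $P_\ell$ to the short diagonal. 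There is no real obstacle beyond bookkeeping; a more computational alternative would use \eqref{eq:polar} (with indices $\ell-1,\ell+1$) to write $Q^*$ explicitly in terms of $p_{\ell-1},p_{\ell+1}$ and verify \eqref{eq:d-sigma-proportion} at $X=Q^*$ by direct substitution into \eqref{eq:EuD}, but the conceptual route above is considerably shorter.
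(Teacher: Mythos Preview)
Your argument is correct and takes a genuinely different route from the paper's. The paper proceeds by direct computation: after noting that $P_\ell$ lies on the locus, it verifies algebraically (via the explicit formulas $|P_t^*|=\sec\bigl(\tfrac{\varphi_t-\varphi_{t+1}}{2}\bigr)$, $\sigma_t=2\sin\bigl(\tfrac{\varphi_t-\varphi_{t+1}}{2}\bigr)$, and a short identity in the $I_\ell,J_\ell,K_\ell$) that the polar of the short diagonal also satisfies \eqref{eq:d-sigma-proportion}, which pins down the line. You instead recognize \eqref{eq:d-sigma-proportion} as the classical distance characterization of the Euclidean symmedian from $P_\ell$ in the sub-triangle $\Delta(P_{\ell-1}P_\ell P_{\ell+1})$, and then invoke the tangent-line construction already recalled in the introduction (Figure~\ref{fig:Sym}) together with the orthogonality property of polars in the Klein model (\S\ref{sec:elements}) to identify this symmedian with the hyperbolic altitude. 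Your approach is more conceptual and nicely closes a loop with the paper's own introductory material, at the cost of relying on the equivalence between the distance-ratio and tangent-intersection descriptions of the symmedian; the paper's computation is more self-contained but less illuminating. Both arguments ultimately establish the same two incidences (vertex and polar) that determine the line.
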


\begin{proof}
This is an elementary fact about any triangle $P_1P_2P_3$.
Under the parametrization of \S\ref{sec:elements}, each $|P^*_t|=\sec\left(\frac{\varphi_t-\varphi_{t+1}}{2}\right)$ and $\sigma_t=2\sin\left(\frac{\varphi_t-\varphi_{t+1}}{2}\right)$. Obviously, $P_2$ is on the line $d_1(X)\sigma_2=d_2(X)\sigma_3$. That $P_3^*$ is also on this line is reduced by \eqref{eq:EuD} to the short computation  that \[(I_1+K_1)(I_3+K_3)-(I_1-K_1)(I_3-K_3)-4J_1J_3=1=(I_2+K_2)(I_3+K_3)-(I_2-K_2)(I_3-K_3)-4J_2J_3.\qedhere\]

\end{proof}

\begin{thm}\label{th:harmonic-orthocenter}
    A polygon $\mathbf{P}$ is harmonic if and only the hyperbolic altitudes from the vertices to their short diagonals are all concurrent. In this case, the point of concurrence is $S_\mathbf{P}$.
\end{thm}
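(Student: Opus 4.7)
The plan is to split the theorem into the biconditional ``harmonic iff altitudes concurrent'' and the identification of the concurrence point with $S_\mathbf{P}$, and to route both through Lemma~\ref{lm:euclidean-proportionality}. That lemma identifies the altitude from $P_\ell$ to its short diagonal as the locus of points $X$ satisfying $d_{\ell-1}(X)/\sigma_{\ell-1}=d_\ell(X)/\sigma_\ell$. Chaining these pairwise equalities cyclically around the polygon, the $n$ altitudes concur at a point $L$ if and only if the ratios $d_1(L)/\sigma_1,\dots,d_n(L)/\sigma_n$ are all equal, i.e., $L$ is a classical \emph{Lemoine point} of $\mathbf{P}$.

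For the direction ``harmonic $\Rightarrow$ altitudes concur at $S_\mathbf{P}$'', I would exploit the paper's definition: $\mathbf{P}=g(\mathbf{P}_0)$ where $\mathbf{P}_0$ is a regular ideal polygon centered at the origin and $g$ is a hyperbolic isometry. The Euclidean rotation by $2\pi/n$ about the origin is itself a hyperbolic isometry (immediate from \eqref{eq:point-dist}) and is a symmetry of $\mathbf{P}_0$. It leaves $\mathcal{Q}_{\mathbf{P}_0}$ invariant, so by the uniqueness of the minimizer in Theorem~\ref{th:Klei} it must fix $S_{\mathbf{P}_0}$, forcing $S_{\mathbf{P}_0}=(0,0)$. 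The same rotation cyclically permutes the hyperbolic altitudes of $\mathbf{P}_0$, so they all pass through the only point of $\mathbb{H}$ it fixes, namely the origin. Applying $g$ transports altitudes of $\mathbf{P}_0$ to altitudes of $\mathbf{P}$ (perpendicularity being a purely hyperbolic notion) and sends $S_{\mathbf{P}_0}$ to $S_\mathbf{P}=g(0)$, so all altitudes of $\mathbf{P}$ concur at $S_\mathbf{P}$.

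For the converse ``altitudes concurrent $\Rightarrow$ harmonic'', the first paragraph already reduces the hypothesis to the existence of a Lemoine point for $\mathbf{P}$, and I would invoke the classical equivalence between this criterion and the paper's projective/isometric definition of harmonic polygon (see \cite{casey,simmons,tarry,Reznik}). I expect this to be the main obstacle, since hyperbolic isometries do not in general preserve the Euclidean ratios $d_\ell/\sigma_\ell$, so the equivalence between the two definitions is not automatic from the isometric action. Should a self-contained argument be desired, a reasonable fallback would be to translate the Lemoine condition together with the Euclidean formulas \eqref{eq:EuD} and \eqref{eq:polar} into algebraic constraints on the $p_\ell$ and verify by direct computation that they cut out the $PGL(2,\mathbb{R})$-orbit of the regular $n$-gon.
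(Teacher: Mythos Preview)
Your forward direction is essentially the paper's: it too argues that altitudes and $S_\mathbf{P}$ are hyperbolic invariants, so the statement reduces to the regular case; you simply spell out the rotation-symmetry argument that the paper leaves implicit under the word ``obvious''.

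The genuine divergence is in the converse. You route it through Lemma~\ref{lm:euclidean-proportionality} to obtain a Lemoine point and then appeal to the classical equivalence (Casey/Simmons/Tarry) between ``has a Lemoine point'' and ``projective image of a regular polygon''. This is valid --- the paper's own Remark after the theorem confirms that a proof of exactly this implication appears in \cite[\S27]{simmons} --- but it outsources precisely the nontrivial content, and your algebraic fallback (cutting out the $\mathrm{PGL}(2,\mathbb{R})$-orbit by direct computation) is not really a plan. The paper instead gives a short self-contained argument that avoids Lemma~\ref{lm:euclidean-proportionality} entirely: after an isometry placing the concurrence point at the origin and the first two vertices at regular positions, the altitude condition becomes a two-term recurrence --- the polar of the short diagonal $P_{\ell-1}P_{\ell+1}$ must lie on the tangent at $P_{\ell-1}$ and on the line through the origin and $P_\ell$, which determines $P_{\ell+1}$ from $P_{\ell-1},P_\ell$ --- and since the regular $n$-gon satisfies the same recurrence with the same initial data, $\mathbf{P}$ must be regular. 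This buys a proof independent of the nineteenth-century literature; your route buys a cleaner conceptual link to the Euclidean Lemoine characterization, at the cost of that dependence.
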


\begin{proof}
    One direction is obvious: if $\mathbf{P}$ is harmonic then all the hyperbolic altitudes to short diagonals are concurrent at $S_\mathbf{P}$, since these entities are all preserved by hyperbolic isometry. 

    To prove the opposite implication, suppose the hyperbolic altitudes to the short diagonals of the $n$-gon $\mathbf{P}$ are concurrent at some $S$. After a hyperbolic isometry we can assume without loss of generality that $S=(0,0)$ and $P_\ell=\left(\cos\left(\frac{2\pi\ell}{n}\right),\sin\left(\frac{2\pi\ell}{n}\right)\right)$ for $\ell=1,2$. The remaining vertices are now recursively determined: 
    having already fixed $P_{\ell-1}$ and $P_{\ell}$, the short diagonal $P_{\ell-1}P_{\ell+1}$ is uniquely determined by its polar, which lies on the tangent line to $\mathbf{O}$ at $P_{\ell-1}$ and on $SP_\ell$. The vertices of the regular $n$-gon satisfy the same recurrence and initial conditions, so $\mathbf{P}$ is harmonic.
\end{proof}

\begin{rmk}
Our Euclidean definition of harmonicity, taken from \cite[\S4]{tarry}, is not the same as the one given in \cite[VI]{casey} and \cite[\S1]{simmons}, which asks for the existence of a point whose distances to the sides are proportional to the side lengths, called the symmedian of $\,\mathbf{P}$ in \cite[VI]{casey}. Mediated by Lemma~\ref{lm:euclidean-proportionality}, Theorem~\ref{th:harmonic-orthocenter} essentially states, from a hyperbolic barycentric perspective, that the two definitions of harmonicity agree. This is not a new result. The first implication in the above proof is essentially already contained in \cite[Thm.~4]{tarry}. A different proof of the opposite implication is given in \cite[\S27]{simmons} and attributed to Neuberg. Of course, these earlier results simply state that one of the definitions of harmonicity implies the other, and do not refer to hyperbolic altitudes at all. Theorem~\ref{th:harmonic-orthocenter} allows us to eponymously interpret all symmedians as hyperbolic barycenters. The former are defined only for harmonic polygons, whereas the latter are defined in \eqref{eq:Sym} for all ideal polygons.\end{rmk}

\begin{thm}\label{th:harmonic-coincidence}
   If $\,\mathbf{P}$ is harmonic then $S_\mathbf{P}=L_\mathbf{P}$.
\end{thm}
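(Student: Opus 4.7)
The plan is to verify that $S_{\mathbf P}$ satisfies the first-order critical-point condition characterizing $L_{\mathbf P}$, and then to invoke strict convexity. The function $\sum_\ell d_\ell^2(X)$ is a strictly convex quadratic in $X$ (being a sum of squares of linear functionals whose gradients $P_\ell^*/|P_\ell^*|$ span $\mathbb{R}^2$), so $L_{\mathbf P}$ is the unique zero of its gradient \eqref{eq:EuBar}.

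First I would combine Theorem~\ref{th:harmonic-orthocenter} with Lemma~\ref{lm:euclidean-proportionality} to produce a single constant $c$ with $d_\ell(S_{\mathbf P})=c\,\sigma_\ell$ for every $\ell$: Theorem~\ref{th:harmonic-orthocenter} places $S_{\mathbf P}$ on every hyperbolic altitude from a vertex to the opposite short diagonal, while Lemma~\ref{lm:euclidean-proportionality} describes each such altitude as the locus where two consecutive ratios $d_{\ell-1}(X)/\sigma_{\ell-1}$ and $d_\ell(X)/\sigma_\ell$ agree, so all $n$ of these ratios at $X=S_{\mathbf P}$ must coincide. Next I would substitute this proportionality into \eqref{eq:EuBar}: since $S_{\mathbf P}$ is interior to $\mathbf P$, the sign convention in \eqref{eq:EuD} gives $(P_\ell^*\cdot S_{\mathbf P}-1)/|P_\ell^*|^2 = -c\,\sigma_\ell/|P_\ell^*|$, so
\[
\nabla\!\left(\sum_\ell d_\ell^2\right)(S_{\mathbf P}) \;=\; -2c\sum_\ell \sigma_\ell\,\frac{P_\ell^*}{|P_\ell^*|}.
\]
Finally I would identify $\sigma_\ell\,P_\ell^*/|P_\ell^*|$ (up to a global sign independent of $\ell$) with the rotated edge vector $R_{\pi/2}(P_{\ell+1}-P_\ell)$: the unit vector $P_\ell^*/|P_\ell^*|$ is a normal to the chord $\xi_\ell$, and scaling it by the chord length $\sigma_\ell$ produces $\pm R_{\pi/2}(P_{\ell+1}-P_\ell)$. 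Summing around the closed polygon telescopes to $\pm R_{\pi/2}\sum_\ell(P_{\ell+1}-P_\ell)=0$, whence the gradient vanishes at $S_{\mathbf P}$ and strict convexity yields $S_{\mathbf P}=L_{\mathbf P}$.

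The least automatic step, and the one I expect to be the main obstacle, is the clean identification of $\sigma_\ell P_\ell^*/|P_\ell^*|$ with $\pm R_{\pi/2}(P_{\ell+1}-P_\ell)$ with a sign uniform in $\ell$; this is a short trigonometric check from the angular parametrization $P_\ell=(\cos\varphi_\ell,\sin\varphi_\ell)$ using the half-angle relations $|P_\ell^*|\cos\bigl((\varphi_\ell-\varphi_{\ell+1})/2\bigr)=\pm 1$ and $\sigma_\ell=2\bigl|\sin((\varphi_\ell-\varphi_{\ell+1})/2)\bigr|$ already implicit in Remark~\ref{rm:positive}. The conceptual punchline is that once the harmonicity of $\mathbf P$ is converted into the proportionality $d_\ell(S_{\mathbf P})\propto\sigma_\ell$, vanishing of the Euclidean least-squares gradient collapses to the elementary closure identity $\sum_\ell(P_{\ell+1}-P_\ell)=0$ for any closed polygon.
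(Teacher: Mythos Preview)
Your argument is correct. Both proofs begin identically, combining Theorem~\ref{th:harmonic-orthocenter} with Lemma~\ref{lm:euclidean-proportionality} to obtain the proportionality $d_\ell(S_{\mathbf P})=c\,\sigma_\ell$, but they diverge in the final step. The paper does not compute the gradient; instead it observes that for any interior $X$ the area identity $\mathbf d(X)\cdot\boldsymbol\sigma=2A$ forces $|\mathbf d(X)|^2=4A^2/|\boldsymbol\sigma|^2+|\boldsymbol\sigma^\perp(X)|^2$, so $L_{\mathbf P}$ equivalently minimizes the component of $\mathbf d(X)$ orthogonal to $\boldsymbol\sigma$, and proportionality makes that component vanish at $S_{\mathbf P}$. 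Your route instead plugs the proportionality into \eqref{eq:EuBar} and reduces the vanishing of the gradient to the closure identity $\sum_\ell(P_{\ell+1}-P_\ell)=0$ via the identification $\sigma_\ell\,P_\ell^*/|P_\ell^*|=\pm R_{\pi/2}(P_{\ell+1}-P_\ell)$. The two are dual manifestations of the same fact: differentiating the paper's constraint $\mathbf d(X)\cdot\boldsymbol\sigma=\text{const}$ in $X$ yields exactly your identity $\sum_\ell\sigma_\ell P_\ell^*/|P_\ell^*|=0$. The paper's packaging is a bit slicker and yields bonus information (the explicit minimum value $4A^2/|\boldsymbol\sigma|^2$, attained iff proportionality holds, which feeds into the discussion after the theorem), whereas your gradient computation is more direct and makes the role of polygon closure explicit.
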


\begin{proof}
    Let $\boldsymbol{\sigma}:=(\sigma_1,\ldots,\sigma_n)$ denote the vector of Euclidean side-lengths of $\mathbf{P}$. For any $X$, let $\mathbf{d}(X):=(d_1(X),\ldots,d_n(X))$
    denote the vector of Euclidean distances from $X$
    to the sides of $\mathbf{P}$. Then $\mathbf{d}(X)\cdot \boldsymbol{\sigma}=2A$, twice the Euclidean area of $\mathbf{P}$, provided that $X$ is in the interior of $\mathbf{P}$. Now let $\boldsymbol{\sigma}^\perp(X):=\mathbf{d}(X)-\frac{2A}{|\boldsymbol{\sigma} |^2}\boldsymbol{\sigma}$ denote the orthogonal projection of $\mathbf{d}(X)$ onto the orthogonal complement of $\mathbb{R}\boldsymbol{\sigma}$. Then $|\mathbf{d}(X)|^2=4A^2|\boldsymbol{\sigma}|^{-2} + |\boldsymbol{\sigma}^\perp(X)|^2$. Therefore $L_\mathbf{P}$, which is by definition the minimizer of $|\mathbf{d}(X)|^2$, is also the minimizer of $|\boldsymbol{\sigma}^\perp(X)|^2$.
    
    In case $\mathbf{P}$ is harmonic, Theorem~\ref{th:harmonic-orthocenter} and 
    Lemma~\ref{lm:euclidean-proportionality} together imply that $\frac{d_{\ell-1}(S_\mathbf{P})}{\sigma_{\ell-1}}=\frac{d_\ell(S_\mathbf{P})}{\sigma_\ell}$ for each $\ell$. Therefore $\mathbf{d}(S_\mathbf{P})$ is a scalar multiple of $\boldsymbol{\sigma}$, whence $|\boldsymbol{\sigma}^\perp(S_\mathbf{P})|^2=0$ and $S_\mathbf{P}=L_\mathbf{P}$. \qedhere
\end{proof}

We see from the proof of Theorem~\ref{th:harmonic-coincidence} that $L_\mathbf{P}$ is equivalently characterized as the minimizer of $|\boldsymbol{\sigma}^\perp(X)|$, which achieves the minimum zero precisely when $\mathbf{P}$ is harmonic, by Theorem~\ref{th:harmonic-orthocenter}. In principle, one can express $L_\mathbf{P}$ algebraically in terms of $p_1,\dots,p_n$ by setting \eqref{eq:EuBar} to zero. The condition $S_\mathbf{P}=L_\mathbf{P}$ defines a subspace of codimension at most $2$ in the space $\mathcal{P}_n$ of all ideal $n$-gons (cf.~\S\ref{sec:moduli}). On the other hand, the subspace of $\mathcal{P}_n$ consisting of harmonic $n$-gons is three\nobreakdash-dimensional, and therefore the space of \mbox{$n$-gons} for which $S_\mathbf{P}=L_\mathbf{P}$ must strictly contain the space of harmonic $n$-gons for $n>5$. This can maybe be understood as some kind of ``regularity condition'' on $\mathbf{P}$, which is in general strictly weaker than harmonicity. And yet, according to the following result, harmonicity is equivalent to having $S_\mathbf{P}=L_\mathbf{P}$ in the next case $n=4$.

\begin{thm}\label{th:coincidence-4}
    If $\,\mathbf{P}$ is a quadrilateral, then $S_\mathbf{P}=L_\mathbf{P}$ if and only if $\,\mathbf{P}$ is harmonic or $S_\mathbf{P}=(0,0)$.
\end{thm}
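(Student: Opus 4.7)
The plan is to translate the condition $S_\mathbf{P} = L_\mathbf{P}$ into a single complex-algebraic identity that either forces the harmonic cross-ratio $[z_1,z_3;z_2,z_4]=-1$ or pins $S_\mathbf{P}$ at the origin. Throughout, write $z_\ell := e^{i\varphi_\ell}$ for the vertices of $\mathbf{P}$. The ``if'' direction is quick: if $\mathbf{P}$ is harmonic, invoke Theorem~\ref{th:harmonic-coincidence}; if $S_\mathbf{P}=(0,0)$, then Theorem~\ref{th:quad-barycenter} forces both diagonals through the origin, so $z_3=-z_1$ and $z_4=-z_2$. Each vertex angle of $\mathbf{P}$ then subtends a diameter of $\mathbf{O}$ and is therefore right, so $\mathbf{P}$ is a rectangle whose two reflection symmetries across the coordinate axes through the origin force $L_\mathbf{P} = (0,0) = S_\mathbf{P}$ as well.

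For the converse, suppose $S_\mathbf{P} = L_\mathbf{P}$ and $S_\mathbf{P} \neq (0,0)$. A Euclidean rotation about the origin preserves both quantities, so we may assume $S_\mathbf{P} = r \in \mathbb{R}_{>0}$. Using $d_\ell(X) = \cos\theta_\ell - \hat{n}_\ell \cdot X$ with $\hat{n}_\ell = e^{i\psi_\ell}$, $\psi_\ell = (\varphi_\ell + \varphi_{\ell+1})/2$, $\theta_\ell = (\varphi_\ell - \varphi_{\ell+1})/2$, together with the easily checked identities $\cos\theta_\ell\, e^{i\psi_\ell} = (z_\ell + z_{\ell+1})/2$ and $\cos\psi_\ell\, e^{i\psi_\ell} = (z_\ell z_{\ell+1}+1)/2$, the critical-point condition $\sum_\ell d_\ell(S_\mathbf{P})\hat{n}_\ell = 0$ (equivalent to $S_\mathbf{P} = L_\mathbf{P}$ by strict convexity of $\sum_\ell d_\ell^2$) collapses into the single complex equation
\[ 2\mu = r(4+\nu), \qquad \mu := \sum_\ell z_\ell, \quad \nu := \sum_\ell z_\ell z_{\ell+1}. \]

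By Theorem~\ref{th:quad-barycenter}, $r$ also lies on both diagonals of $\mathbf{P}$; since the chord through unit-circle points $z_a, z_b$ meets $\mathbb{R}$ at $(z_a+z_b)/(1+z_a z_b)$, setting $P := z_1 z_3$ and $Q := z_2 z_4$ gives $z_1+z_3 = r(1+P)$ and $z_2+z_4 = r(1+Q)$. The identity $\nu = (z_1+z_3)(z_2+z_4)$ then yields $\mu = r(2+P+Q)$ and $\nu = r^2(1+P)(1+Q)$, and substituting into $2\mu = r(4+\nu)$ and dividing by $r$ collapses everything into the clean condition $\nu = 2(P+Q)$. A direct expansion shows $\nu = 2(P+Q) \iff [z_1,z_3;z_2,z_4]=-1$, and since M\"obius-invariance of the cross-ratio together with a direct computation on the square shows that harmonicity of an ideal quadrilateral is equivalent to its cross-ratio being $-1$, we conclude that $\mathbf{P}$ is harmonic. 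The main bookkeeping hurdle is the first conversion from the real vector equation $\sum_\ell d_\ell \hat{n}_\ell = 0$ into the compact complex form $2\mu = r(4+\nu)$; everything downstream is essentially polynomial manipulation.
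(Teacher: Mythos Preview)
Your proof is correct and follows essentially the same strategy as the paper's: both argue the ($\Leftarrow$) direction via Theorem~\ref{th:harmonic-coincidence} and the rectangle case, and for ($\Rightarrow$) both set the Euclidean gradient~\eqref{eq:EuBar} to zero at $X=S_\mathbf{P}$ and reduce algebraically to the dichotomy ``cross-ratio is harmonic'' or ``$S_\mathbf{P}=(0,0)$''. The paper simply says ``after some algebraic manipulation'' in the $p_\ell\in\mathbb{RP}^1$ coordinates and obtains $[p_1,p_2,p_3,p_4]=1$; you instead pass to complex coordinates $z_\ell=e^{i\varphi_\ell}$, rotate so that $S_\mathbf{P}=r\in\mathbb{R}_{>0}$, and then crucially invoke Theorem~\ref{th:quad-barycenter} a second time (the paper does not do this explicitly in the converse) to write $z_1+z_3=r(1+z_1z_3)$ and $z_2+z_4=r(1+z_2z_4)$, which collapses the computation to the clean identity $(z_1+z_3)(z_2+z_4)=2(z_1z_3+z_2z_4)$, i.e.\ $[z_1,z_3;z_2,z_4]=-1$. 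Your route makes the ``algebraic manipulation'' fully transparent and pinpoints exactly where the harmonic cross-ratio emerges; the paper's version is terser but less illuminating on this point.
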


\begin{proof}
    ($\Leftarrow$). It follows from Theorem~\ref{th:quad-barycenter} that $S_\mathbf{P}=(0,0)$ if and only if the Euclidean polygon $\mathbf{P}$ is a rectangle, in which case it is clear that $L_\mathbf{P}=(0,0)$. If $\mathbf{P}$ is harmonic then $S_\mathbf{P}=L_\mathbf{P}$ by Theorem~\ref{th:harmonic-coincidence}. 
    
    ($\Rightarrow$). Setting to zero the evaluation of \eqref{eq:EuBar} at $X=S_\mathbf{P}$, after some algebraic manipulation, 
    we conclude that either $I_\mathbf{P}-K_\mathbf{P}=0=J_\mathbf{P}$, or else the cross-ratio $[p_1,p_2,p_3,p_4]=1$. 
    These conditions are equivalent to having $S_\mathbf{P}=(0,0)$ and to $\mathbf{P}$ being harmonic, respectively.\qedhere
\end{proof}

\begin{rmk}
    The conclusion of Theorem~\ref{th:coincidence-4} may seem strange, because the origin is only special from the Euclidean point of view, but hyperbolically unremarkable. But the origin remains meaningful when comparing the two geometries, since the endofunctions of the unit disc which are simultaneously hyperbolic and Euclidean isometries is precisely the set of rotations about the origin.
\end{rmk}

We expect that a similar algebraic analysis should yield the following case $n=5$. One subtlety in this case is that the regular pentagram $\mathbf{P}$ still satisfies the proportionality condition $\boldsymbol{\sigma}^\perp(S_\mathbf{P})=\mathbf{0}$, and therefore so do all of its images under hyperbolic isometry, even though this $\mathbf{P}$ is not technically ``harmonic'' in our sense, and yet the subspace of $\mathcal{P}_5$ defined by $S_\mathbf{P}=L_\mathbf{P}$ does not care about the convexity of $\mathbf{P}$. We still optimistically hope for the best possible result.
\begin{cnj}\label{cnj:harmonic}
    If $\,\mathbf{P}=(p_1,\dots,p_5)$ is a pentagon, then $S_\mathbf{P}=L_\mathbf{P}$ if and only if either the five cross-ratios $[p_\ell,p_{\ell+1},p_{\ell+2},p_{\ell+3}]$ are all the same or $S_\mathbf{P}=(0,0)$.
\end{cnj}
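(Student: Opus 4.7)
The plan is to follow the template of Theorem~\ref{th:coincidence-4}. For the $(\Leftarrow)$ direction, if $S_\mathbf{P}=(0,0)$ then $J_\mathbf{P}=0$ and $I_\mathbf{P}=K_\mathbf{P}$ by \eqref{eq:Sym}; evaluating \eqref{eq:EuBar} at the origin using \eqref{eq:polar} should reduce, via a direct computation, to an identity that follows from $J_\mathbf{P}=0=I_\mathbf{P}-K_\mathbf{P}$. If instead the five cross-ratios $c_\ell:=[p_\ell,p_{\ell+1},p_{\ell+2},p_{\ell+3}]$ are all equal to some common value $c$, then a standard uniqueness-of-Möbius-transformation argument produces a Möbius automorphism $\mu$ of the absolute with $\mu(p_\ell)=p_{\ell+1}$ for all $\ell$; such a $\mu$ has order $5$ and hence is a hyperbolic rotation about some fixed point $Q\in\mathbb{H}$. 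Two subcases arise: either $\mathbf{P}$ is a (convex) harmonic pentagon, in which case Theorem~\ref{th:harmonic-coincidence} applies directly, or $\mathbf{P}$ lies in the Möbius orbit of a regular pentagram. In the pentagram subcase, the intrinsic definition of $S_\mathbf{P}$ forces $\mu(S_\mathbf{P})=S_\mathbf{P}$, hence $S_\mathbf{P}=Q$; since $\mu$ permutes the short diagonals and their opposite vertices consistently, the hyperbolic altitudes to the short diagonals all concur at $Q$. By Lemma~\ref{lm:euclidean-proportionality}, this concurrence forces the ratios $d_\ell(S_\mathbf{P})/\sigma_\ell$ to be independent of $\ell$, so $\mathbf{d}(S_\mathbf{P})\propto\boldsymbol{\sigma}$, and the concluding argument of the proof of Theorem~\ref{th:harmonic-coincidence} delivers $S_\mathbf{P}=L_\mathbf{P}$.

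For the $(\Rightarrow)$ direction, the plan is to evaluate \eqref{eq:EuBar} at $X=S_\mathbf{P}$, substituting \eqref{eq:Sym} and \eqref{eq:polar} explicitly, and clear denominators to obtain two polynomial equations $F_1=F_2=0$ in the five variables $p_1,\dots,p_5$. Denote by $V\subset\mathcal{P}_5$ their common vanishing locus, and by $V_1:=\{J_\mathbf{P}=0=I_\mathbf{P}-K_\mathbf{P}\}$ and $V_2:=\{c_1=\cdots=c_5\}$ the two predicted components. Both $V_1$ and $V_2$ are codimension-$2$ subvarieties contained in $V$ by the $(\Leftarrow)$ direction, and the paper notes that $V$ itself has codimension at most $2$. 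To conclude $V=V_1\cup V_2$, it suffices to verify the ideal-theoretic identity $(F_1,F_2)=I_{V_1}\cap I_{V_2}$ in $\mathbb{Q}[p_1,\dots,p_5]$, which is an explicit polynomial factorization problem.

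The main obstacle is precisely this explicit factorization. Unlike the $n=4$ case, where a single cross-ratio relation $[p_1,p_2,p_3,p_4]=1$ dropped out cleanly, the condition $c_1=\cdots=c_5$ for $n=5$ amounts to four coupled equations that are not individually visible in the raw polynomial system, and the total degrees in the $p_\ell$ are substantially higher. It is likely that computer algebra will be needed to confirm the factorization, although the dimension-counting argument above should at least reduce the problem to checking that each irreducible component of $V$ is contained in $V_1\cup V_2$, rather than requiring a full primary decomposition from scratch. A secondary obstacle is the pentagram subcase of the $(\Leftarrow)$ direction: since the regular pentagram is non-convex and therefore not formally harmonic in the sense of this paper, one cannot invoke Theorem~\ref{th:harmonic-coincidence} as a black box, and must instead extract its symmetry-based mechanism to apply to every pentagon admitting an order-$5$ Möbius self-symmetry.
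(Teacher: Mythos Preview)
This statement is a \emph{conjecture} in the paper; the authors give no proof and explicitly say they ``optimistically hope for the best possible result.'' So there is no paper argument to compare against, and your proposal is really an attempt to settle an open problem.

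The substantive gap is in the $(\Leftarrow)$ branch $S_\mathbf{P}=(0,0)$. You write that evaluating \eqref{eq:EuBar} at the origin ``should reduce, via a direct computation, to an identity that follows from $J_\mathbf{P}=0=I_\mathbf{P}-K_\mathbf{P}$,'' by analogy with Theorem~\ref{th:coincidence-4}. But that analogy breaks: for $n=4$, Theorem~\ref{th:quad-barycenter} forces any $\mathbf{P}$ with $S_\mathbf{P}=(0,0)$ to be a rectangle, and the symmetry does the work; for $n=5$ there is no such rigidity, and $\{S_\mathbf{P}=(0,0)\}$ is a genuine $3$-parameter family. Concretely, with $P_\ell=(\cos\varphi_\ell,\sin\varphi_\ell)$, $\psi_\ell=\tfrac{\varphi_\ell+\varphi_{\ell+1}}{2}$, $\theta_\ell=\tfrac{\varphi_\ell-\varphi_{\ell+1}}{2}$, one has $P_\ell^*/|P_\ell^*|^2=e^{i\psi_\ell}\cos\theta_\ell=\tfrac{1}{2}\bigl(e^{i\varphi_\ell}+e^{i\varphi_{\ell+1}}\bigr)$, so \eqref{eq:EuBar} vanishes at the origin if and only if $\sum_\ell P_\ell=0$. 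On the other hand $S_\mathbf{P}=(0,0)$ is the condition $\sum_\ell e^{i\psi_\ell}/\sin\theta_\ell=0$. These are two \emph{different} codimension-$2$ loci in $\mathcal{P}_5$, and there is no evident mechanism by which the second implies the first; if anything, this computation suggests the $S_\mathbf{P}=(0,0)$ clause of the conjecture may be false as stated, in which case your ``direct computation'' cannot succeed.

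Two smaller points. In the pentagram subcase you correctly note that Theorem~\ref{th:harmonic-coincidence} cannot be invoked as a black box; be aware that its concluding step uses $\mathbf{d}(X)\cdot\boldsymbol{\sigma}=2A$ for $X$ interior to a convex $\mathbf{P}$, which also fails for pentagrams, so you must substitute $\mathbf{d}\propto\boldsymbol{\sigma}$ directly into \eqref{eq:EuBar} and telescope. For $(\Rightarrow)$, your dimension count does not exclude extra components of $V$ beyond $V_1\cup V_2$; the ideal-theoretic verification you describe is precisely the unresolved content of the conjecture, and the paper gives no indication that it has been checked.
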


\bibliographystyle{plain}

\end{document}